\let\my@algocf@latexcaption\algocf@latexcaption
\let\my@addcontentsline\addcontentsline
\long\def\algocf@latexcaption#1[#2]#3{%
\def\addcontentsline##1##2##3{}%
\my@algocf@latexcaption{#1}[#2]{#3}%
\global\let\addcontentsline\my@addcontentsline%
}
\newcommand{\rrvert}{\vert}
\newcommand{\llvert}{\vert}
\newtheorem{theorem}{Theorem}
\newtheorem{claim}[theorem]{Claim}
\newtheorem{corollary}[theorem]{Corollary}
\newtheorem{lemma}[theorem]{Lemma}
\newtheorem{proposition}[theorem]{Proposition}
\begin{document}
\begin{frontmatter}

\title{Long runs under a conditional limit distribution}
\runtitle{Long runs under a conditional limit distribution}

\begin{aug}
\author{\fnms{Michel} \snm{Broniatowski}\corref{}\ead[label=e1]{michel.broniatowski@upmc.fr}}
\and
\author{\fnms{Virgile} \snm{Caron}\ead[label=e2]{virgile.caron@upmc.fr}}
\runauthor{M. Broniatowski and V. Caron}
\affiliation{Universit\'{e} Pierre Et Marie Curie, LSTA}

\address{LSTA\\
Universit\'{e} Pierre et Marie Curie---Paris 6\\
75005 Paris\\
France\\
\printead{e1}\\
\phantom{E-mail:\ }\printead*{e2}}
\end{aug}

% HISTORY:
\received{\smonth{2} \syear{2012}}
\revised{\smonth{9} \syear{2013}}

% ABSTRACT

\begin{abstract}
This paper presents a sharp approximation of the density of long runs
of a
random walk conditioned on its end value or by an average of a function of
its summands as their number tends to infinity. In the large deviation range
of the conditioning event it extends the Gibbs conditional principle in the
sense that it provides a description of the distribution of the random walk
on long subsequences. An approximation of the density of the runs is also
obtained when the conditioning event states that the end value of the random
walk belongs to a thin or a thick set with a nonempty interior. The
approximations hold either in probability under the conditional distribution
of the random walk, or in total variation norm between measures. An application
of the approximation scheme to the evaluation of rare event probabilities
through importance sampling is provided. When the conditioning event is in
the range of the central limit theorem, it provides a tool for statistical
inference in the sense that it produces an effective way to implement the
Rao--Blackwell theorem for the improvement of estimators; it also leads to
conditional inference procedures in models with nuisance parameters. An
algorithm for the simulation of such long runs is presented, together with
an algorithm determining the maximal length for which the approximation is
valid up to a prescribed accuracy.
\end{abstract}

% KEYWORDS
% Pirmas kwd is didziosios raides
%
\begin{keyword}[class=AMS]
\kwd[Primary ]{60B10}
\kwd[; secondary ]{65C50}
\end{keyword}

\begin{keyword}
\kwd{Gibbs principle}
\kwd{conditioned random walk}
\kwd{large deviation}
\kwd{moderate deviation}
\kwd{simulation}
\kwd{importance sampling}
\kwd{Rao--Blackwell theorem}
\end{keyword}

\end{frontmatter}

%s1 #&#
%s1 ###
\section{\texorpdfstring{Context and scope.}{Context and scope}}\label{sec1}

This paper explores the asymptotic distribution of a random walk conditioned
on its final value as the number of summands increases. Denote $\mathbf
{X}_{1}^{n}:= ( \mathbf{X}_{1},\ldots,\mathbf{X}_{n} ) $  a set
of $n$ independent copies of a real random variable $\mathbf{X}$ with
density $p_{%
\mathbf{X}}$ on $\mathbb{R}$ and $\mathbf{S}_{1,n}:=\mathbf
{X}_{1}+\cdots+\mathbf{X}_{n}$. We consider approximations of the density of the
vector $\mathbf{X}_{1}^{k}= ( \mathbf{X}_{1},\ldots,\mathbf{X}%
_{k} ) $ on $\mathbb{R}^{k}$ when $\mathbf{S}_{1,n}= na _{n}$, and $a_{n}$
is a convergent sequence. The integer valued sequence $k:=k_{n}$ is such
that
%
%e1 #&#
\renewcommand{\theequation}{K\arabic{equation}}
\begin{equation}
0\leq\lim\sup_{n\rightarrow\infty}k/n\leq{1} %\tag{K1}
\label{k/nTENDSTO1}
\end{equation}
together with
%
%e2 #&#
\begin{equation}
\lim_{n\rightarrow\infty}n-k=\infty. \label{pas trop vite}%\tag{K2}
\end{equation}
Therefore we may consider the asymptotic behavior of the density of the
trajectory of the random walk on long runs. For the sake of
applications we also
address the case when $\mathbf{S}_{1,n}$ is substituted by $\mathbf{U}%
_{1,n}:=u ( \mathbf{X}_{1} ) +\cdots+u ( \mathbf{X}_{1}
) $
for some real valued measurable function $u$, and when the conditioning
event is $ ( \mathbf{U}_{1,n}=u_{1,n} ) $ where $u_{1,n}/n$
converges as $n$ tends to infinity. A complementary result provides an
estimation for the case when the conditioning event is a large set in the
large deviation range, $ ( \mathbf{U}_{1,n}\in nA ) $ where
$A$ is
a Borel set with nonempty interior with $Eu(\mathbf{X}) < \mathtt
{essinf} A$;
two cases are considered, according to the local dimension of $A$ at its
essential infimum point \texttt{essinf}$A$.

The interest in this question stems from various sources. When $k$ is fixed
(typically $k=1$) this is a version of the \textit{Gibbs conditional
principle} which has been studied extensively for fixed $a_{n}\neq
E\mathbf{%
X}$, therefore under a \textit{large deviation} condition. Diaconis and
Freedman \cite%
{DiaconisFreedman1988} have considered this issue also in the case $%
k/n\rightarrow\theta$ for $0\leq\theta<1$, in connection with de
Finetti's theorem for exchangeable finite sequences. Their interest was
related to the approximation of the density of $\mathbf{X}_{1}^{k}$ by the
\textit{product density} of the summands $\mathbf{X}_{i}$'s, and
therefore on
the validity of the independence of the $\mathbf{X}_{i}$'s under
conditioning. Their result is in the spirit of Van Camperhout and Cover
\cite{VanCamperhoutCover1981},
and parallels can be drawn with Csisz{\'a}r's \cite{Csiszar1084}
asymptotic conditional
independence result, when the conditioning event is $ ( \mathbf{S}%
_{1,n}> na _{n} ) $ with $a_{n}$ fixed and larger than $E\mathbf
{X}$. In
the same vein and under the same \textit{large deviation} condition
Dembo and Zeitouni \cite%
{DemboZeitouni1996} considered similar problems. This question is also of
importance in statistical physics. Numerous papers pertaining to structural
properties of polymers deal with this issue, and we refer to \cite%
{denHollanderWeiss1988} and \cite{denHollanderWeiss1988a} for a description
of those problems and related results. In the moderate deviation case,
Ermakov \cite%
{ERmakov2006} also considered a similar problem when $k=1$.

The approximation of conditional densities is the basic ingredient for the
numerical estimation of integrals through improved Monte Carlo techniques.
Rare event probabilities may be evaluated through importance sampling
techniques; efficient sampling schemes consist of the simulation of random
variables under a proxy of a conditional density, often with respect to
conditioning events of the form $ ( \mathbf{U}_{1,n}> na _{n} ) $;
optimizing these schemes has been a motivation for this work.

In parametric statistical inference, conditioning on the observed value
of a
statistic leads to a reduction of the mean square error of some
estimate of
the parameter; the famous Rao--Blackwell and Lehmann--Scheff\'{e} theorems
can be implemented when a simulation technique produces samples
according to
the distribution of the data conditioned on the value of some observed
statistics. In these applications the conditioning event is local, and when
the statistic is of the form $\mathbf{U}_{1,n}$, then the observed
value $%
u_{1,n}$ satisfies $\lim_{n\rightarrow\infty}u_{1,n}/n=Eu (
\mathbf{X}%
)$. Such is the case in exponential families when $\mathbf{U}_{1,n}$
is a sufficient statistic for the parameter. Other fields of applications
pertain to parametric estimation where conditioning by the observed
value of
a sufficient statistic for a nuisance parameter produces optimal inference
via maximum likelihood in the conditioned model. In general this
conditional density is unknown; the approximation produced in this paper
provides a tool for the solution of these problems.

For both importance sampling and for the improvement of estimators, the
approximation of the conditional density of $\mathbf{X}_{1}^{k}$ on long
runs should be of a special form: it has to be a density on $\mathbb{R}%
^{k}$, easy to simulate, and the approximation should be sharp. For these
applications the relative error of the approximation should be small on the
simulated paths only. Also for inference via maximum likelihood under
nuisance parameters the approximation has to be accurate on the sample itself
and not on the entire space.

Our first set of results provides a very sharp approximation scheme;
numerical evidence on exponential runs with length $n=1000$ provide a
\textit{relative error} of the approximation of order less than 100\% for
the density of the first 800 terms when evaluated on the sample paths
themselves, thus on the significant part of the support of the conditional
density; this very sharp approximation rate is surprising in such a large
dimensional space, and it illustrates the fact that the conditioned measure
occupies a very small part of the entire space. Therefore the approximation
of the density of $\mathbf{X}_{1}^{k}$ is not performed on the sequence of
entire spaces $\mathbb{R}^{k}$, but merely on a sequence of subsets of $
\mathbb{R}^{k}$ which contain the trajectories of the conditioned
random walk
with probability going to $1$ as $n$ tends to infinity; the
approximation is
performed on \textit{typical paths}.

The extension of our results from typical paths to the whole space
$\mathbb{R%
}^{k}$ holds: convergence of the relative error on large sets imply
that the
total variation distance between the conditioned measure and its
approximation goes to $0$ on the entire space. So our results provide an
extension of Diaconis and Freedman \cite{DiaconisFreedman1988} and
Dembo and Zeitouni \cite{DemboZeitouni1996} who
considered the case when $k$ is of small order with respect to $n$; the
conditions which are assumed in the present paper are weaker than those
assumed in the previously cited works; however, in contrast with their
results, we
do not provide explicit rates for the convergence to $0$ of the total
variation distance on $\mathbb{R}^{k} $.

It would have been of interest to consider sharper convergence criteria than
the total variation distance; the $\chi^{2}$-distance, which is the mean
square relative error, cannot be bounded through our approach on the entire
space $\mathbb{R}^{k}$, since it is only suitable for large sets of
trajectories (whose probability goes to $1$ as $n$ increases); this is not
sufficient to bound its expected value under the conditional sampling.

This paper is organized as follows. Section~\ref{sec2} presents the approximation
scheme for the conditional density of $\mathbf{X}_{1}^{k}$ under the
conditioning point sequence $ ( \mathbf{S}_{1,n}= na _{n} )$. In Section~\ref{sec3}, it is extended to the case when the conditioning family of events is
written as $ ( \mathbf{U}_{1,n}=u_{1,n} )$. The value of $k$
for which this
approximation is appropriate is discussed; an algorithm for the
implementation of
this rule is proposed. Algorithms for the simulation of random variables
under the approximating scheme are also presented. Section~\ref{sec4} extends the
results of Section~\ref{sec3} when conditioning on large sets. Two applications are
presented in Section~\ref{sec5}; the first one pertains to Rao--Blackwellization of
estimators, hence on the application of the results of Section~\ref{sec3} when the
conditioning point is such that $\lim_{n\rightarrow\infty
}u_{1,n}/n=Eu (
\mathbf{X} ) $; in the second application the result of Section~\ref{sec4} is
used to derive small variance estimators of rare event probabilities through
importance sampling; in this case the conditioning event is in the
range of
the large deviation scale.

The main steps of the proofs are in the core of the paper; some of the
technicalities are left to the \hyperref[app]{Appendix}.

%s2 #&#
%s2 ###
\section{\texorpdfstring{Random walks conditioned on their sum.}
{Random walks conditioned on their sum}}\label{sec2}

%s2.1 #&#
%s2.1 ###
\subsection{\texorpdfstring{Notation and hypothesis.}
{Notation and hypothesis}}\label{sec2.1}

In this section the conditioning point event is written as
\[
\mathcal{E}_{n}:= ( \mathbf{S}_{1,n}= na _{n} ).
\]

We assume that $\mathbf{X}$ satisfies the Cram\'er condition; that is,
$\mathbf{X}$
has a finite moment generating function $\Phi(t):=E[\exp (t\mathbf
{X} )]$ in a nonempty neighborhood of $0$. Denote
\begin{eqnarray*}
m(t)&:=&\frac{d}{dt}\log\Phi(t),
\\
s^{2}(t)&:=&\frac{d}{dt}m(t),
\\
\mu_{3}(t)&:=&\frac{d}{dt}s^{2}(t).
\end{eqnarray*}
The values of $m(t)$, $s^{2}$ and $\mu_{3}(t)$ are the expectation, the
variance and the kurtosis of the \textit{tilted} density
%
%e3 #&#
\renewcommand{\theequation}{\arabic{equation}}
\setcounter{equation}{0}
\begin{equation}
\pi^{\alpha}(x):=\frac{\exp(tx)}{\Phi(t)}p(x), \label{tilted density}
\end{equation}
where $t$ is the unique solution of the equation $m(t)=\alpha$ when
$\alpha$
belongs to the support of $\mathbf{X}$. Conditions on $\Phi(t)$ which
ensure existence and uniqueness of $t$ are referred to as \textit{steepness
properties; }we refer to \cite{Barndorff1978}, page 153 ff., for all
properties of moment generating functions used in this paper. Denote
$\Pi
^{\alpha}$ the probability measure with density $\pi^{\alpha}$.

We also assume that the characteristic function of $\mathbf{X}$ is in $L^{r}$
for some $r\geq1$ which is necessary for the Edgeworth expansions to be
performed.\vadjust{\goodbreak}

The probability measure of the random vector $\mathbf{X}_{1}^{n}$ on $%
\mathbb{R}^{n}$ conditioned upon $\mathcal{E}_{n}$ is denoted $P_{ na _{n}}$.
We also denote $P_{ na _{n}}$ the corresponding distribution of $\mathbf
{X}%
_{1}^{k}$ conditioned upon $\mathcal{E}_{n}$; the vector $\mathbf{X}_{1}^{k}$
then has a density with respect to the Lebesgue measure on $\mathbb{R}^{k}$
for $1\leq k<n$, which will be denoted $p_{ na _{n}}$. For a general r.v.
$\mathbf{Z}$ with density $p$, $p ( \mathbf{Z}=z ) $ denotes the
value of $p$ at point $z$. Hence, $p_{ na _{n}} ( x_{1}^{k} )
=p ( \mathbf{X}_{1}^{k}=x_{1}^{k}|\mathbf{S}_{1,n}= na _{n} ) $. The
normal density function on $\mathbb{R}$ with mean $\mu$ and variance
$\tau$
at $x$ is denoted $\mathfrak{n} ( \mu,\tau,x ) $. When $\mu=0$
and $\tau=1$, the standard notation $\mathfrak{n} ( x ) $ is used.

%s2.2 #&#
%s2.2 ###
\subsection{\texorpdfstring{A first approximation result.}
{A first approximation result}}\label{sec2.2}

We first put forward a simple result which provides an approximation of the
density $p_{ na _{n}}$ of the measure $P_{ na _{n}}$ on $\mathbb{R}^{k}$
when $k$
satisfies (\ref{k/nTENDSTO1}) and (\ref{pas trop vite}). For $i\leq j$
denote
\[
s_{i,j}:=x_{i}+\cdots+x_{j}.
\]
Denote $a:=a_{n}$ omitting the index $n$ for clarity.

We make use of the following property which states the invariance of
conditional densities under tilting: For $1\leq i\leq j\leq n$, for all
$%
a$ in the range of $\mathbf{X}$, for all~$u$ and $s$
%
%e4 #&#
\begin{equation}
p (  \mathbf{S}_{i,j}=u\rrvert \mathbf{S}_{1,n}=s ) =
\pi ^{a} (  \mathbf{S}_{i,j}=u\rrvert
\mathbf{S}_{1,n}=s ), \label{inv tilting}
\end{equation}
where $\mathbf{S}_{i,j}:=\mathbf{X}_{i}+\cdots+\mathbf{X}_{j}$ together
with $%
\mathbf{S}_{1,0}=s_{1,0}=0$. By the Bayes formula it holds that
%
%e5 #&#
%e6 #&#
\begin{eqnarray}
\label{Bayes1}p_{ na } \bigl( x_{1}^{k} \bigr) &=&\prod
_{i=0}^{k-1}p (  \mathbf {X}%
_{i+1}=x_{i+1}\rrvert \mathbf{S}_{i+1,n}= na -s_{1,i}
)
\\
&=&\prod_{i=0}^{k-1}\pi^{a}(
\mathbf{X}_{i+1}=x_{i+1})\frac{\pi
^{a} ( \mathbf{S}_{i+2,n}= na -s_{1,i+1} ) }{\pi^{a} (
\mathbf{S}%
_{i+1,n}= na -s_{1,i} ) }
\nonumber
\\
&=& \Biggl[ \prod_{i=0}^{k-1}
\pi^{a}(\mathbf{X}_{i+1}=x_{i+1}) \Biggr]
\frac{\pi^{a} ( \mathbf{S}_{k+1,n}= na -s_{1,k} ) }{\pi
^{a} (
\mathbf{S}_{1,n}= na  ) }. \label{Bayes2}
\end{eqnarray}
Denote $\overline{\mathbf{S}_{k+1,n}}$ and $\overline{\mathbf
{S}_{1,n}}$ the
normalized versions of $\mathbf{S}_{k+1,n}$ and $\mathbf{S}_{1,n}$
under the
sampling distribution $\Pi^{a}$. By (\ref{Bayes2})
\begin{eqnarray*}
p_{ na } \bigl( x_{1}^{k} \bigr) &=& \Biggl[ \prod
_{i=0}^{k-1}\pi^{a}(%
\mathbf{X}_{i+1}=x_{i+1}) \Biggr]\\
&&{}\times \frac{\sqrt{n}}{\sqrt{n-k}}
\frac{\pi
^{a} ( \overline{\mathbf{S}_{k+1,n}}={(ka-s_{1,k})}/{(s_{a}\sqrt {n-k})}%
) }{\pi^{a} ( \overline{\mathbf{S}_{1,n}}=0 ) }.
\end{eqnarray*}
A first order Edgeworth expansion is performed in both terms of the
ratio in
the above display; see Remark \ref{Remark Edgeworth array} below. This
yields, assuming (\ref{k/nTENDSTO1}) and (\ref{pas trop vite}), the following:

%pr1 #&#
\begin{proposition}
\label{PropResultatsurRkProposition} For all $x_{1}^{k}$ in $\mathbb
{R}^{k}$
%
%e7 #&#
\begin{eqnarray}\label{PropResultatsurRk}
p_{ na } \bigl( x_{1}^{k} \bigr) &=& \Biggl[ \prod
_{i=0}^{k-1}\pi^{a}(%
\mathbf{X}_{i+1}=x_{i+1}) \Biggr]\nonumber\\
&&{}\times \biggl[ \frac{\mathfrak{n} ( {
(ka-s_{1,k})}/{(s(t^{a})\sqrt{n-k})} ) }{\mathfrak{n} ( 0 )
}
\sqrt{%
\frac{n}{n-k}}
\\
&& \hspace*{-16pt}\qquad\quad{}\times\biggl( 1+\frac{\mu_{3}(t^{a})}{6s^{3}(t^{a})\sqrt {n-k}}H_{3} \biggl( \frac{ka-s_{1,k}}{s(t^{a})\sqrt{n-k}}
\biggr) \biggr) +O \biggl( \frac
{1}{\sqrt{%
n}} \biggr) \biggr],
\nonumber
\end{eqnarray}
where $H_{3}(x):=x^{3}-3x$. The value of $t^{a}$ is defined through $%
m(t^{a})=a$.
\end{proposition}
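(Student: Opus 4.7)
The proof goes directly through the exact Bayes factorization displayed immediately before the statement, which after the standardization performed in the text reads
$$p_{na}(x_1^k) = \Bigl[\prod_{i=0}^{k-1}\pi^a(\mathbf{X}_{i+1}=x_{i+1})\Bigr]\, \frac{\sqrt{n}}{\sqrt{n-k}}\, \frac{\pi^a\bigl(\overline{\mathbf{S}_{k+1,n}}=y\bigr)}{\pi^a\bigl(\overline{\mathbf{S}_{1,n}}=0\bigr)},\qquad y:=\frac{ka-s_{1,k}}{s(t^a)\sqrt{n-k}}.$$
Since this is an identity, what remains is to expand the numerator and denominator of the ratio. Under $\Pi^a$ the $\mathbf{X}_i$ are i.i.d.\ with mean $a=m(t^a)$, variance $s^2(t^a)$ and third cumulant $\mu_3(t^a)$, and the $L^r$ assumption on the characteristic function of $\mathbf{X}$ transfers to the tilted law, which makes the local (density) Edgeworth expansion applicable to both sums.

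Applied to the numerator, the density at $y$ of a standardized sum of $n-k$ tilted i.i.d.\ summands, the order-one Edgeworth expansion yields
$$\pi^a\bigl(\overline{\mathbf{S}_{k+1,n}}=y\bigr)=\mathfrak{n}(y)\Bigl[1+\frac{\mu_3(t^a)}{6\,s^3(t^a)\sqrt{n-k}}\,H_3(y)\Bigr]+o\bigl(1/\sqrt{n-k}\bigr),$$
and applied to the denominator, the density of the corresponding sum of $n$ standardized summands at the point $0$,
$$\pi^a\bigl(\overline{\mathbf{S}_{1,n}}=0\bigr)=\mathfrak{n}(0)+o\bigl(1/\sqrt{n}\bigr),$$
since the identity $H_3(0)=0$ eliminates the first correction at the origin. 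Because $\mathfrak{n}(0)=1/\sqrt{2\pi}$ is a positive constant bounded away from zero, a first order inversion of the denominator combined with multiplication into the numerator yields, after regrouping the $\sqrt{n/(n-k)}$ factor, the bracketed expression of the proposition, with the two Edgeworth remainders and their interaction collected into the displayed $O(1/\sqrt{n})$ term.

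The main obstacle is not the algebra but the uniformity of the Edgeworth expansion across the triangular array $\{\Pi^{a_n}\}$, since $a=a_n$ varies with $n$ and one cannot simply invoke a single-sequence version of the result; this is the content of the Edgeworth array remark alluded to in the text. Convergence of $a_n$ confines $t^{a_n}$ to a compact subset of the steepness domain of $\Phi$, which provides uniform bounds on $s^2(t^{a_n})$, $\mu_3(t^{a_n})$ and on the $L^r$ norm of the tilted characteristic function, and a uniform lower bound on $s^2(t^{a_n})$ used to standardize. Together these upgrade the pointwise expansion to one that holds uniformly along the array, legitimizing the combination of the two Edgeworth approximations with the exact factorization and completing the proof.
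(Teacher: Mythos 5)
Your argument is exactly the paper's: the exact factorization via Bayes' formula and the tilting invariance (\ref{Bayes2}), followed by a first-order Edgeworth expansion of the standardized numerator and denominator (using $H_3(0)=0$ at the origin), with the uniformity over the triangular array $\{\Pi^{a_n}\}$ justified as in Remark \ref{Remark Edgeworth array} by the convergence of $a_n$. The proposal is correct and takes essentially the same approach as the paper.
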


Despite its appealing aspect, (\ref{PropResultatsurRk}) is of poor
value for
applications, since it does not yield an explicit way to simulate samples
under a proxy of $p_{ na }$ for large values of $k$. The other way is to
construct the approximation of $p_{ na }$ step by step, approximating the
terms in (\ref{Bayes1}) one by one and using the invariance under the
tilting at each step, which introduces a product of different tilted
densities in (\ref{Bayes2}). This method produces a valid approximation
of $%
p_{ na }$ on subsets of $\mathbb{R}^{k}$ which contain the trajectories of
the conditioning random walk with larger and larger probability, going
to $1$
as $n$ tends to infinity.

This introduces the main focus of this paper.

%s2.3 #&#
%s2.3 ###
\subsection{\texorpdfstring{A recursive approximation scheme.}
{A recursive approximation scheme}}\label{sec2.3}

We introduce a positive sequence $\varepsilon_{n}$ which satisfies
%
%e8 #&#
%e9 #&#
\renewcommand{\theequation}{E\arabic{equation}}
\setcounter{equation}{0}
\begin{eqnarray}
\label{E1} \lim_{n\rightarrow\infty}\varepsilon_{n}\sqrt{n-k} & =&
\infty,
\\
% \tag{E1}
\lim_{n\rightarrow\infty}\varepsilon_{n} ( \log n )
^{2} & =&0. \label{E2}%\tag{E2}
\end{eqnarray}

It will be shown that $\varepsilon_{n} ( \log n ) ^{2}$ is the
rate of
accuracy of the approximating scheme.

We denote $a$ the generic term of the convergent sequence $ (
a_{n} ) _{n\geq1}$. For clarity the dependence on $n$ of all
quantities involved in the subsequent development is omitted in the notation.

%s2.3.1 #&#
%s2.3.1 ###
\subsubsection{\texorpdfstring{Approximation of the density of the runs.}
{Approximation of the density of the runs}}\label{sec2.3.1}

Define a density $g_{ na }(y_{1}^{k})$ on $\mathbb{R}^{k}$ as follows. Set
\[
g_{0}( y_{1}\rrvert y_{0}):=
\pi^{a}(y_{1})
\]
with $y_{0}$ arbitrary, and for $1\leq i\leq k-1$ define $g(
y_{i+1}\rrvert  y_{1}^{i})$ recursively.

Set $t_{i}$ to be the unique solution of the equation
%
%e10 #&#
\renewcommand{\theequation}{\arabic{equation}}
\setcounter{equation}{5}
\begin{equation}
m_{i}:=m(t_{i})=\frac{n}{n-i} \biggl( a-
\frac{s_{1,i}}{n} \biggr), \label{mi_centre}
\end{equation}
where $s_{1,i}:=y_{1}+\cdots+y_{i}$. The tilted adaptive family of
densities $%
\pi^{m_{i}}$ is the basic ingredient of the derivation of approximating
scheme$.$ Let
\[
s_{i}^{2}:=\frac{d^{2}}{dt^{2}} \bigl( \log E_{\pi^{m_{i}}}
\exp(t\mathbf {X}) \bigr) ( 0 )
\]
and
\[
\mu_{j}^{i}:=\frac{d^{j}}{dt^{j}} \bigl( \log
E_{\pi^{m_{i}}}\exp (t\mathbf{X}) \bigr) ( 0 ), \qquad j=3,4,
\]
which are the second, third and fourth cumulants of $\pi^{m_{i}}$. Let
%
%e11 #&#
\begin{equation}
g\bigl( y_{i+1}\rrvert y_{1}^{i}
\bigr)=C_{i}p_{\mathbf
{X}}(y_{i+1})\mathfrak{%
n} ( \alpha\beta+a,\beta,y_{i+1} ) \label{g-i}
\end{equation}
be a density where
%
%e12 #&#
\begin{eqnarray}
\alpha&=&t_{i}+\frac{\mu_{3}^{i}}{2s_{i}^{2} ( n-i-1 ) } \label{beta},
\\
\beta&=&s_{i}^{2} ( n-i-1 ) \label{alpha}
\end{eqnarray}
and $C_{i}$ is a normalizing constant.

Define
%
%e14 #&#
\begin{equation}
g_{ na }\bigl(y_{1}^{k}\bigr):=g_{0}(
 y_{1}\rrvert y_{0}) \prod
_{i=1}^{k-1}g\bigl( y_{i+1}\rrvert
y_{1}^{i}\bigr). \label{g_a}
\end{equation}

We then have:

%th2 #&#
\begin{theorem}
\label{Thm:Approx_local_cond_density} Assume  (\ref{k/nTENDSTO1}) and
(\ref%
{pas trop vite}) together with (\ref{E1}) and (\ref{E2}). Let
$Y_{1}^{n}$ be
a sample from density $p_{ na }$. Then
%
%e15 #&#
\begin{eqnarray}\label{local approx under exact value}
p_{ na } \bigl( Y_{1}^{k} \bigr)&:=&p\bigl(
\mathbf {X}_{1}^{k}=Y_{1}^{k}
\vert\mathbf{S}_{1,n}= na \bigr)
\nonumber
\\[-8pt]
\\[-8pt]
\nonumber
&=& g_{ na }
\bigl(Y_{1}^{k}\bigr) \bigl(1+o_{P_{ na }}\bigl(\varepsilon
_{n} ( \log n ) ^{2}\bigr)\bigr).
\end{eqnarray}
\end{theorem}

\begin{pf}
The proof uses Bayes's formula to write $p( \mathbf{X}%
_{1}^{k}=Y_{1}^{k}\rrvert \mathbf{S}_{1,n}= na )$ as a product of $k$
conditional densities of the individual terms of the trajectory
evaluated at $%
Y_{1}^{k}$. Each term of this product is approximated by an Edgeworth
expansion which together with the properties of $Y_{1}^{k}$ under $P_{ na }$
completes the proof. This proof is rather long, and we have deferred its
technical steps to the \hyperref[app]{Appendix}.

Denote $S_{1,0}=0$ and $S_{1,i}:=S_{1,i-1}+Y_{i}$. It holds that
%
%e16 #&#
\begin{eqnarray}
\label{joint density}
&&p\bigl( \mathbf{X}_{1}^{k}=Y_{1}^{k}
\rrvert \mathbf{S}_{1,n} = na \bigr)=p( \mathbf{X}_{1}=Y_{1}
\rrvert \mathbf{S}_{1,n}= na ),
\nonumber\\
&&\prod_{i=1}^{k-1}p\bigl(
\mathbf{X}_{i+1}=Y_{i+1}\rrvert \mathbf{X} %%
_{1}^{i} =Y_{1}^{i},
\mathbf{S}_{1,n}= na \bigr)
\\
&&\qquad =\prod_{i=0}^{k-1}p (
\mathbf{X}_{i+1}=Y_{i+1}\rrvert \mathbf{S}_{i+1,n}= na -S_{1,i}
)
\nonumber
\end{eqnarray}
by independence of the r.v.'s $\mathbf{X}_{i}$'s.

Define $t_{i}$ through
\[
m(t_{i})=\frac{n}{n-i} \biggl( a-\frac{S_{1,i}}{n} \biggr)
\]
a function of the past r.v.'s $Y_{1}^{i}$, and set $m_{i}:=m(t_{i})$
and $%
s_{i}^{2}:=s^{2}(t_{i})$. By (\ref{inv tilting})
\begin{eqnarray*}
&& p (  \mathbf{X}_{i+1}=Y_{i+1}\rrvert
\mathbf{S}%
_{i+1,n}= na -S_{1,i} )
\\
&&\qquad =\pi^{m_{i}} \bigl(  \mathbf{X}_{i+1}=Y_{i+1}
\rrvert \mathbf {S}%
_{i+1}^{n}= na -S_{1,i}
\bigr)
\\
&&\qquad =\pi^{m_{i}} ( \mathbf{X}_{i+1}=Y_{i+1} )
\frac{\pi
^{m_{i}} ( \mathbf{S}_{i+2,n}= na -S_{1,i+1} ) }{\pi
^{m_{i}} (
\mathbf{S}_{i+1,n}= na -S_{1,i} ) },
\end{eqnarray*}
where we used the independence of the $\mathbf{X}_{j}$'s under $\pi
^{m_{i}}$. A precise evaluation of the dominating terms in this latest
expression is needed in order to handle the product~(\ref{joint density}).

Under the sequence of densities $\pi^{m_{i}}$ the i.i.d. r.v.'s
$\mathbf{X}%
_{i+1},\dots,\mathbf{X}_{n}$ define a triangular array which satisfies
a local
central limit theorem, and an Edgeworth expansion. Under $\pi
^{m_{i}}$, $%
\mathbf{X}_{i+1}$ has expectation $m_{i}$ and variance $s_{i}^{2}$. Center
and normalize both the numerator and denominator in the fraction which
appear in the last display. Denote $\overline{\pi_{n-i-1}}$ the
density of
the normalized sum $ ( \mathbf{S}_{i+2,n}-(n-i-1)m_{i} ) / (
s_{i}\sqrt{n-i-1} ) $ when the summands are i.i.d. with common
density $%
\pi^{m_{i}}$. Accordingly $\overline{\pi_{n-i}}$ is the density of the
normalized sum $ ( \mathbf{S}_{i+1,n}-(n-i)m_{i} ) / (
s_{i}%
\sqrt{n-i} ) $ under i.i.d. $\pi^{m_{i}}$ sampling. Hence, evaluating
both $\overline{\pi_{n-i-1}}$ and its normal approximation at point $%
Y_{i+1}$,
%
%e17 #&#
\begin{eqnarray}\label{condTilt}
&& p (  \mathbf{X}_{i+1}=Y_{i+1}\rrvert
\mathbf{S}%
_{i+1,n}= na -S_{1,i} )
\nonumber\\
&&\qquad =\frac{\sqrt{n-i}}{\sqrt{n-i-1}}\pi^{m_{i}} ( \mathbf{X}%
_{i+1}=Y_{i+1} ) \frac{\overline{\pi_{n-i-1}} (  (
m_{i}-Y_{i+1} ) /s_{i}\sqrt{n-i-1} ) }{\overline{\pi_{n-i}}(0)}
\\
&&\qquad:=\frac{\sqrt{n-i}}{\sqrt{n-i-1}}\pi^{m_{i}} ( \mathbf{X}%
_{i+1}=Y_{i+1} ) \frac{N_{i}}{D_{i}}.
\nonumber
\end{eqnarray}
The sequence of densities $\overline{\pi_{n-i-1}}$ converges pointwise to
the standard normal density under (\ref{E1}) which implies that $n-i$ tends
to infinity for all $1\leq i\leq k$, and an Edgeworth expansion to order
5 is performed for the numerator and the denominator. The main arguments
used in order to obtain the order of magnitude of the involved quantities
are (i) a maximal inequality which controls the magnitude of~$m_{i}$ for
all $i$ between $0$ and $k-1$ (Lemma \ref{LemmaMaxm_in}), (ii) the order
of the maximum of the~$Y_{i}$'s (Lemma \ref%
{Lemma_max_X_i_under_conditioning}). As proved in the \hyperref[app]{Appendix},
%
%e18 #&#
\begin{equation}
N_{i}=\mathfrak{n} ( -Y_{i+1}/s_{i}\sqrt{n-i-1}
) \cdot A \cdot B+O_{P_{ na }} \biggl( \frac{1}{ ( n-i-1 )
^{3/2}} \biggr),
\label{num approx fixed i}
\end{equation}
where
%
%e19 #&#
\begin{equation}
A:= \biggl( 1+\frac{aY_{i+1}}{s_{i}^{2}(n-i-1)}-\frac
{a^{2}}{2s_{i}^{2}(n-i-1)}%
+
\frac{o_{P_{ na }}(\varepsilon_{n}\log n)}{n-i-1} \biggr) \label{Adem}
\end{equation}
and
%
%e20 #&#
\begin{equation}
B:= \pmatrix{\displaystyle 1-\frac{\mu_{3}^{i}}{2s_{i}^{4} ( n-i-1 ) }(a-Y_{i+1})
\vspace*{2pt}\cr
\displaystyle-\frac{\mu_{3}^{i}-s_{i}^{4}}{8s_{i}^{4}(n-i-1)}-\frac{15(\mu
_{3}^{i})^{2}%
}{72s_{i}^{6}(n-i-1)}+\frac{O_{P_{ na }} ( (\log n)^{2} ) }{ (
n-i-1 ) ^{2}}} \label{Bdem}.
\end{equation}
The $O_{P_{ na }} ( \frac{1}{ ( n-i-1 ) ^{3/2}} ) $
term in (%
\ref{num approx fixed i}) is uniform on $ (m_{i}-Y_{i+1}
)/s_{i}%
\sqrt{n-i-1}$. Turn back to (\ref{condTilt}) and perform the same Edgeworth
expansion in the denominator, which is written as
%
%e21 #&#
\begin{equation}
D_{i}=\mathfrak{n} ( 0 ) \biggl( 1-\frac{\mu
_{4}^{i}-3s_{i}^{4}}{%
8s_{i}^{4}(n-i)}-
\frac{15(\mu_{3}^{i})^{2}}{72s_{i}^{6}(n-i)} \biggr) +O_{P_{ na }} \biggl( \frac{1}{ ( n-i ) ^{3/2}} \biggr).
\label{PI 0}
\end{equation}
The terms in $g( Y_{i+1}\rrvert  Y_{1}^{i})$ follow from an
expansion in the ratio of the two expressions (\ref{num approx fixed
i}) and
(\ref{PI 0}) above. The Gaussian contribution is explicit in (\ref{num
approx fixed i}) while the term $\exp (\frac{\mu
_{3}^{i}}{2s_{i}^{4} (
n-i-1 ) }Y_{i+1} )$ is the dominant term in $B$. Turning to
(\ref%
{condTilt}) and comparing with (\ref{local approx under exact value}) it
appears that the normalizing factor $C_{i}$ in $g( Y_{i+1}\rrvert
Y_{1}^{i})$ compensates the term $\frac{\sqrt{n-i}}{\Phi(t_{i})\sqrt {n-i-1}}%
\exp ( \frac{-a\mu_{3}^{i}}{2s_{i}^{2}(n-i-1)} )$, where the
term $\Phi(t_{i})$ comes from $\pi^{m_{i}} ( \mathbf{X}%
_{i+1}=Y_{i+1} ) $. Furthermore the product of the remaining terms
in the
above approximations in (\ref{num approx fixed i}) and (\ref{PI 0})
form the $1+o_{P_{ na }} ( \varepsilon_{n} ( \log n )
^{2} ) $
approximation rate, as claimed$.$ Details are deferred to the \hyperref[app]{Appendix}. This
yields %
\[
p\bigl( \mathbf{X}_{1}^{k}=Y_{1}^{k}
\rrvert \mathbf {S}_{1,n}= na \bigr)= \bigl( 1+o_{P_{ na }} \bigl(
\varepsilon_{n} ( \log n ) ^{2} \bigr) \bigr) g_{0}(
 Y_{1}\rrvert Y_{0})\prod
_{i=1}^{k-1}g\bigl( Y_{i+1}\rrvert
Y_{1}^{i}\bigr),
\]
which completes the proof of the theorem.
\end{pf}

That the variation distance between $P_{ na _{n}}$ and $G_{ na _{n}}$
tends to $0$ as $n\rightarrow\infty$ is stated in Section~\ref{sec3}.

%f1 #&#
%f1 ###
\begin{figure}

\includegraphics{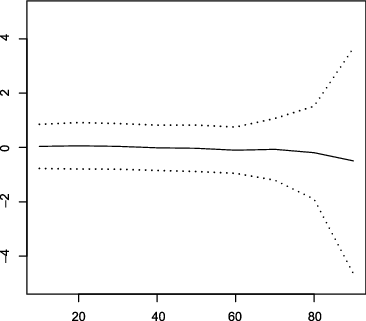}

\caption{$\overline{\operatorname{ERE}}(k)$ (solid line) along with upper and lower
bound of
$\overline{\operatorname{CI}}(k)$ (dotted line) as a function of $k$ with $n=100$ and $a$
such that $P_{n}\simeq10^{-8}$.}
\label{table:choix_de_k_GD_Approx_100}
\end{figure}

%f2 #&#
%f2 ###
\begin{figure}[b]

\includegraphics{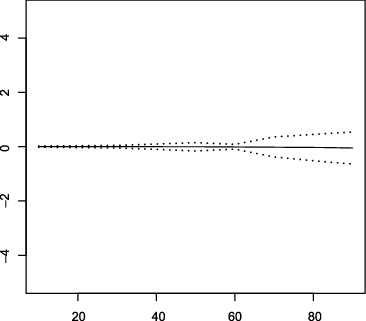}

\caption{$\operatorname{ERE}(k)$ (solid line) along with upper and lower bound of
$\operatorname{CI}(k)$
(dotted line) as a function of $k$ with $n=100$ and $a$ such that $%
P_{n}\simeq10^{-8}$.}
\label{table:choix_de_k_GD_Vrai_100}
\end{figure}

%re3 #&#
\begin{remark}
When the $\mathbf{X}_{i}$'s are i.i.d. with a standard normal density, then
the result in the above approximation theorem holds with $k=n-1$ implying
that $p( \mathbf{X}_{1}^{n-1}=x_{1}^{n-1}\rrvert \mathbf{S}%
_{1,n}= na )=g_{ na } ( x_{1}^{n-1} ) $ for all $x_{1}^{n-1}$ in $%
\mathbb{R}^{n-1}$. This extends to the case when they have an infinitely
divisible distribution. However, formula (\ref{local approx under exact
value}%
) holds true without the error term only in the Gaussian case. Similar exact
formulas can be obtained for infinitely divisible distributions using
(\ref%
{joint density}) where no use of tilting is made. Such formulas are
used to produce
Figures~\ref{table:choix_de_k_GD_Approx_100}, \ref%
{table:choix_de_k_GD_Vrai_100}, \ref{table:choix_de_k_GD_Approx_1000}
and %
\ref{table:choix_de_k_GD_Vrai_1000} in order to assess the validity of the
selection rule for $k$ in the exponential case.
\end{remark}

%re4 #&#
\begin{remark}
The density in (\ref{g-i}) is a slight modification of $\pi^{m_{i}}$. The
modification from $\pi^{m_{i}} (y_{i+1} )$ to $g
(y_{i+1}\vert
y_{1}^{i} )$ is a small shift in the location parameter depending both
on $a$ and on the skewness of $p$, and a change in the variance: large
values of $\mathbf{X}_{i+1}$ have smaller weight for large $i$, so that the
distribution of $\mathbf{X}_{i+1}$ tends to concentrate around $m_{i}$
as $i$
approaches $k$.
\end{remark}

%re5 #&#
\begin{remark}
\label{Remark Edgeworth array} In Theorem \ref{Thm:Approx_local_cond_density},
as in Proposition \ref{PropResultatsurRkProposition}, Theorem \ref
{ThmApproxsousf(x)} or Lemma \ref{Lemma_max_X_i_under_conditioning}, we
use an Edgeworth expansion for the density of the normalized sum of the
$(n-i)$th row of some triangular array of row-wise independent r.v.'s
with a common density. Consider the i.i.d. r.v.'s $\mathbf
{X}_{1},\ldots,\mathbf{X}_{n}$
with common density $\pi^{a}(x)$ where $a$\vadjust{\goodbreak} may depend on $n$ but remains
bounded. The Edgeworth expansion with respect to the normalized
density of $%
\mathbf{S}_{1,n}$ under $\pi^{a}$ can be derived following closely the
proof given, for example, in \cite{Feller1971}, page 532 ff., by
substituting the cumulants of $p$ by those of $\pi^{a}$. Denote
$\varphi
_{a}(z)$ the characteristic function of $\pi^{a}(x)$. Clearly for any $
\delta>0$ there exists $q_{a,\delta}<1$ such that $\llvert \varphi
_{a}(z)\rrvert <q_{a,\delta}$ and since $a$ is bounded, $%
\sup_{n}q_{a,\delta}<1$. Therefore inequality (2.5) in \cite{Feller1971},
page 533 holds. With $\psi_{n}$ defined as in \cite{Feller1971}, (2.6) holds
with $\varphi$ replaced by $\varphi_{a}$ and $\sigma$ by $s(t^{a})$;
(2.9) holds, which completes the proof of the Edgeworth expansion in the
simple case. The proof is analogous for higher order expansions.
\end{remark}

%s2.3.2 #&#
%s2.3.2 ###
\subsubsection{\texorpdfstring{Sampling under the approximation.}
{Sampling under the approximation}}\label{sec2.3.2}
%
%f3 #&#
%f3 ###
\begin{figure}

\includegraphics{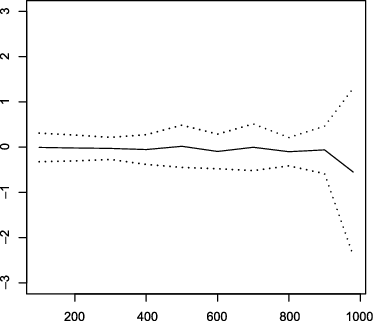}

\caption{$\overline{\operatorname{ERE}}(k)$ (solid line) along with upper and lower
bound of
$\overline{\operatorname{CI}}(k)$ (dotted line) as a function of $k$ with $n=1000$ and $a$
such that $P_{n}\simeq10^{-8}$.}
\label{table:choix_de_k_GD_Approx_1000}
\end{figure}
%
%f4 #&#
%f4 ###
\begin{figure}[b]

\includegraphics{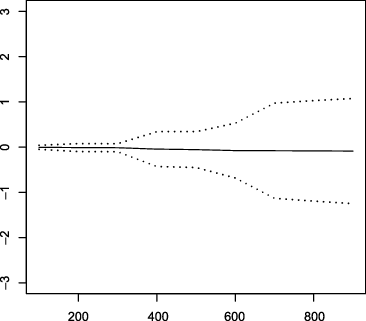}

\caption{$\operatorname{ERE}(k)$ (solid line) along with upper and lower bound of
$\operatorname{CI}(k)$
(dotted line) as a function of $k$ with $n=1000$ and $a$ such that $%
P_{n}\simeq10^{-8}$.}
\label{table:choix_de_k_GD_Vrai_1000}
\end{figure}
Applications of Theorem \ref{Thm:Approx_local_cond_density} in importance
sampling procedures and in Statistics require a reverse result. So assume
that $Y_{1}^{k}$ is a random vector generated under $G_{ na }$ with
density $%
g_{ na }$. Can we state that $g_{ na } ( Y_{1}^{k} ) $ is a good
approximation for $p_{ na } ( Y_{1}^{k} ) $? This holds true. We
state a simple lemma in this direction.

Let $\mathfrak{R}_{n}$ and $\mathfrak{S}_{n}$ denote two p.m.'s on
$\mathbb{R}%
^{n}$ with respective densities $\mathfrak{r}_{n}$ and~$\mathfrak{s}_{n}$.

%le6 #&#
\begin{lemma}
\label{Lemma:commute_from_p_n_to_g_n} Suppose that for some sequence $%
\varepsilon_{n}$ which tends to $0$ as $n$ tends to infinity
%
%e22 #&#
\begin{equation}
\mathfrak{r}_{n} \bigl( Y_{1}^{n} \bigr) =
\mathfrak{s}_{n} \bigl( Y_{1}^{n} \bigr) \bigl(
1+o_{\mathfrak{R}_{n}}(\varepsilon_{n}) \bigr) \label{p_n equiv g_n under p_n}
\end{equation}
as $n$ tends to $\infty$. Then
%
%e23 #&#
\begin{equation}
\mathfrak{s}_{n} \bigl( Y_{1}^{n} \bigr) =
\mathfrak{r}_{n} \bigl( Y_{1}^{n} \bigr) \bigl(
1+o_{\mathfrak{S}_{n}}(\varepsilon_{n}) \bigr). \label{g_n equiv p_n under g_n}
\end{equation}
\end{lemma}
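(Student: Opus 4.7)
The plan is to transport the statement ``ratio close to $1$'' from $\mathfrak{R}_n$-probability to $\mathfrak{S}_n$-probability by showing that the good set (where the density ratio is close to $1$) is also typical under $\mathfrak{S}_n$, and that on this good set the reciprocal ratio is automatically close to $1$ at the same rate.

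First I would fix $\delta>0$ and introduce the good set
\begin{equation*}
B_n(\delta):=\left\{ y\in\mathbb{R}^n\;:\;\left|\frac{\mathfrak{r}_n(y)}{\mathfrak{s}_n(y)}-1\right|\leq \delta\,\varepsilon_n\right\}.
\end{equation*}
Hypothesis (\ref{p_n equiv g_n under p_n}) says exactly that $\mathfrak{R}_n(B_n(\delta))\to 1$ as $n\to\infty$ for every $\delta>0$.

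Next I would transfer this to $\mathfrak{S}_n$. On $B_n(\delta)$ we have $\mathfrak{r}_n\leq (1+\delta\varepsilon_n)\mathfrak{s}_n$, so
\begin{equation*}
\mathfrak{R}_n(B_n(\delta))=\int_{B_n(\delta)} \mathfrak{r}_n(y)\,dy\leq (1+\delta\varepsilon_n)\,\mathfrak{S}_n(B_n(\delta)),
\end{equation*}
which yields $\mathfrak{S}_n(B_n(\delta))\geq \mathfrak{R}_n(B_n(\delta))/(1+\delta\varepsilon_n)\to 1$. Thus the good set is typical under $\mathfrak{S}_n$ as well.

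Finally, on $B_n(\delta)$ an elementary algebraic manipulation gives
\begin{equation*}
\left|\frac{\mathfrak{s}_n(y)}{\mathfrak{r}_n(y)}-1\right|=\left|\frac{\mathfrak{r}_n(y)/\mathfrak{s}_n(y)-1}{\mathfrak{r}_n(y)/\mathfrak{s}_n(y)}\right|\leq \frac{\delta\,\varepsilon_n}{1-\delta\,\varepsilon_n}\leq 2\delta\,\varepsilon_n
\end{equation*}
for $n$ large enough. Combining with $\mathfrak{S}_n(B_n(\delta))\to 1$, we conclude that for every $\delta>0$,
\begin{equation*}
\mathfrak{S}_n\!\left(\left|\frac{\mathfrak{s}_n(Y_1^n)}{\mathfrak{r}_n(Y_1^n)}-1\right|>2\delta\,\varepsilon_n\right)\longrightarrow 0,
\end{equation*}
which is precisely (\ref{g_n equiv p_n under g_n}). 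The only minor subtlety — and hence the one step that deserves attention — is the transfer inequality between $\mathfrak{R}_n(B_n(\delta))$ and $\mathfrak{S}_n(B_n(\delta))$: it relies on the fact that on $B_n(\delta)$ the ratio $\mathfrak{r}_n/\mathfrak{s}_n$ is bounded above by $1+\delta\varepsilon_n$, so no pathology from a vanishing denominator can occur. Everything else is a direct computation, so no further obstacle is expected.
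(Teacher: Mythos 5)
Your proof is correct and follows essentially the same route as the paper's: the same good set, the same transfer inequality $\mathfrak{R}_{n}(B_{n}(\delta))\leq(1+\delta\varepsilon_{n})\mathfrak{S}_{n}(B_{n}(\delta))$, and the conclusion that the good set is typical under $\mathfrak{S}_{n}$. You are in fact slightly more complete than the paper, which stops at $\mathfrak{S}_{n}(B_{n}(\delta))\rightarrow1$ and leaves implicit the elementary bound $\left\vert \mathfrak{s}_{n}/\mathfrak{r}_{n}-1\right\vert \leq\delta\varepsilon_{n}/(1-\delta\varepsilon_{n})$ that you spell out.
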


\begin{pf}
Denote
\[
A_{n,\varepsilon_{n}}:= \bigl\{ y_{1}^{n}\dvtx (1-
\varepsilon_{n})\mathfrak{s} %%
_{n} \bigl(
y_{1}^{n} \bigr) \leq\mathfrak{r}_{n} \bigl(
y_{1}^{n} \bigr) \leq%
\mathfrak{s}_{n}
\bigl( y_{1}^{n} \bigr) (1+\varepsilon_{n}) \bigr\}
.
\]
It holds for all positive $\delta$,
\[
\lim_{n\rightarrow\infty}\mathfrak{R}_{n} ( A_{n,\delta\varepsilon_{n}} ) =1.
\]
Write
\[
\mathfrak{R}_{n} ( A_{n,\delta\varepsilon_{n}} ) =\int\mathbf {1}%
_{A_{n,\delta\varepsilon_{n}}} \bigl( y_{1}^{n} \bigr) \frac{\mathfrak{r}
_{n} ( y_{1}^{n} ) }{\mathfrak{s}_{n}(y_{1}^{n})}
\mathfrak{s}%
_{n}\bigl(y_{1}^{n}
\bigr)\,dy_{1}^{n}.
\]
Since
\[
\mathfrak{R}_{n} ( A_{n,\delta\varepsilon_{n}} ) \leq (1+\delta
\varepsilon_{n})\mathfrak{S}_{n} ( A_{n,\delta\varepsilon_{n}} ),
\]
it follows that
\[
\lim_{n\rightarrow\infty}\mathfrak{S}_{n} ( A_{n,\delta\varepsilon_{n}} ) =1,
\]
which proves the claim.
\end{pf}

As a direct by-product of Theorem \ref{Thm:Approx_local_cond_density} and
Lemma \ref{Lemma:commute_from_p_n_to_g_n} we obtain:

%th7 #&#
\begin{theorem}
\label{ThmdeSimulation}Assume  \textup{(\ref{k/nTENDSTO1})} and \textup{(\ref{pas trop
vite}%
)} together with \textup{(\ref{E1})} and \textup{(\ref{E2})}.
Let $Y_{1}^{k}$ be a sample with
density $g_{ na }$. It holds that
\[
p_{ na } \bigl( Y_{1}^{k} \bigr)
=g_{ na }\bigl(Y_{1}^{k}\bigr)
\bigl(1+o_{G_{ na }}\bigl(\varepsilon _{n} ( \log n ) ^{2}
\bigr)\bigr).
\]
\end{theorem}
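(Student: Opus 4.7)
The plan is to invoke Lemma \ref{Lemma:commute_from_p_n_to_g_n} with the substitutions $\mathfrak{r}_n \leftarrow p_{na}$, $\mathfrak{s}_n \leftarrow g_{na}$, and $\varepsilon_n \leftarrow \epsilon_n(\log n)^2$; the theorem then drops out immediately. The lemma is phrased for densities on $\mathbb{R}^n$, but its proof uses only that the two objects are probability densities on a common measurable space, so it applies verbatim with $\mathbb{R}^n$ replaced by $\mathbb{R}^k$.

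First I would verify the hypothesis of the lemma. By assumption (\ref{E2}), the sequence $\varepsilon_n := \epsilon_n(\log n)^2$ tends to $0$. Theorem \ref{Thm:Approx_local_cond_density}, which is already established, asserts that for $Y_1^k$ drawn under $P_{na}$,
\[
p_{na}(Y_1^k) = g_{na}(Y_1^k)\bigl(1 + o_{P_{na}}(\varepsilon_n)\bigr),
\]
which is precisely (\ref{p_n equiv g_n under p_n}) with the identifications above. Next I would read off the conclusion of the lemma: for $Y_1^k$ sampled under $G_{na}$,
\[
g_{na}(Y_1^k) = p_{na}(Y_1^k)\bigl(1 + o_{G_{na}}(\varepsilon_n)\bigr).
\]
Inverting this relation, and using the elementary fact that $(1+o_{G_{na}}(\varepsilon_n))^{-1} = 1 + o_{G_{na}}(\varepsilon_n)$, one obtains $p_{na}(Y_1^k) = g_{na}(Y_1^k)(1+o_{G_{na}}(\varepsilon_n))$, which is the statement of Theorem \ref{ThmdeSimulation}.

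There is no substantive obstacle here: the genuine analytic content sits in Theorem \ref{Thm:Approx_local_cond_density}, while Lemma \ref{Lemma:commute_from_p_n_to_g_n} is a soft measure-theoretic device that transports $P_{na}$-in-probability control of the likelihood ratio into $G_{na}$-in-probability control, via the observation that on the set $A_{n,\delta\varepsilon_n}$ where the two densities agree up to a factor $1\pm\delta\varepsilon_n$, the measures $P_{na}$ and $G_{na}$ differ by at most the same multiplicative factor. The only point worth a moment's attention is purely bookkeeping, namely that the Edgeworth expansions carried out in the proof of Theorem \ref{Thm:Approx_local_cond_density} involve sums of length $n-i$ for $0\le i\le k-1$ but produce a statement about the $k$-dimensional joint density, so the ratio $p_{na}/g_{na}$ to which the lemma is applied is indeed a ratio of densities on $\mathbb{R}^k$, and the invocation is clean.
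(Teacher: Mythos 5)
Your proposal is correct and is exactly the paper's own argument: the authors state Theorem \ref{ThmdeSimulation} as a direct by-product of Theorem \ref{Thm:Approx_local_cond_density} and Lemma \ref{Lemma:commute_from_p_n_to_g_n}, with the same identifications $\mathfrak{r}_n\leftarrow p_{na}$, $\mathfrak{s}_n\leftarrow g_{na}$, $\varepsilon_n\leftarrow\epsilon_n(\log n)^2$ that you make. The inversion step $(1+o_{G_{na}}(\varepsilon_n))^{-1}=1+o_{G_{na}}(\varepsilon_n)$ and the remark that the lemma applies on $\mathbb{R}^k$ rather than $\mathbb{R}^n$ are both harmless and correctly handled.
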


%s3 #&#
%s3 ###
\section{\texorpdfstring{Random walks conditioned by a function of their
summands.}
{Random walks conditioned by a function of their summands}}\label{sec3}

This section extends the above results to the case when the conditioning
event is written as
%
%e24 #&#
\begin{equation}
\mathbf{U}_{1,n}:=u_{1,n} \label{cond sur f(X)}\vadjust{\goodbreak}
\end{equation}
with
\[
\mathbf{U}_{1,n}:=u ( \mathbf{X}_{1} ) + \cdots+u ( \mathbf
{X}%
_{n} ),
\]
where the function $u$ is real valued and the sequence $u_{1,n}/n$
converges. The characteristic function of the random variable $u (
\mathbf{X} ) $ is assumed to belong to $L^{r}$ for some $r\geq1$.
Let $p_{\mathbf{U}}$ denote the density of $\mathbf{U}=u ( \mathbf
{X} )$ and denote $p_{\mathbf{X}}$ the density of~$\mathbf{X}$.

Assume
%
%e25 #&#
\begin{equation}
\phi_{\mathbf{U}}(t):=E\bigl[\exp (t\mathbf{U} )\bigr]<\infty \label{PhiU}
\end{equation}
for $t$ in a nonempty neighborhood of $0$. Define the functions
$m(t)$, $%
s^{2}(t)$ and $\mu_{3}(t)$ as the first, second and third derivatives
of $%
\log\phi_{\mathbf{U}}(t)$.

Denote
%
%e26 #&#
\begin{equation}
\pi_{\mathbf{U}}^{\alpha}(u):=\frac{\exp(tu)}{\phi_{\mathbf{U}}(t)}p_{
\mathbf{U}} ( u )
\label{tiltedU}
\end{equation}
with $m(t)=\alpha$, and $\alpha$ belongs to the support of $P_{\mathbf{U}}$,
the distribution of $\mathbf{U}$.

We also introduce the family of densities
%
%e27 #&#
\begin{equation}
\pi_{u}^{\alpha}(x):=\frac{\exp(tu(x))}{\phi_{\mathbf
{U}}(t)}p_{\mathbf{X}%
} ( x )
. \label{tiltedu(X)}
\end{equation}

%s3.1 #&#
%s3.1 ###
\subsection{\texorpdfstring{Approximation of the density of the runs.}
{Approximation of the density of the runs}}\label{sec3.1}

Assume that the sequence $\varepsilon_{n}$ satisfies (\ref{E1}) and (\ref{E2}).

Define a density $g_{u_{1,n}}(y_{1}^{k})$ on $\mathbb{R}^{k}$ as follows.
Set
\[
m_{0}:=u_{1,n}/n
\]
and
%
%e28 #&#
\begin{equation}
\label{tilte_pour_x0} g_{0}( y_{1}\rrvert
y_{0}):=\pi_{u}^{m_{0}}(y_{1})
\end{equation}
with $y_{0}$ arbitrary and, for $1\leq i\leq k-1$, define $g(
y_{i+1}\rrvert  y_{1}^{i})$ recursively. Denote $%
u_{1,i}:=u(y_{1})+ \cdots+u(y_{i})$.

Set $t_{i}$ to be the unique solution of the equation
%
%e29 #&#
\begin{equation}
m_{i}:=m(t_{i})=\frac{u_{1,n}-u_{1,i}}{n-i} \label{mif},
\end{equation}
and let
\[
s_{i}^{2}:=\frac{d^{2}}{dt^{2}} \bigl( \log E_{\pi_{\mathbf
{U}}^{m_{i}}}
\exp(t\mathbf{U}) \bigr) ( 0 )
\]
and
\[
\mu_{j}^{i}:=\frac{d^{j}}{dt^{j}} \bigl( \log
E_{\pi_{\mathbf{U}%
}^{m_{i}}}\exp(t\mathbf{U}) \bigr) ( 0 ),\qquad j=3,4,
\]
which are the second, third and fourth cumulants of $\pi_{\mathbf
{U}}^{m_{i}}$. A density $g( y_{i+1}\rrvert  y_{1}^{i})$ is
defined as
%
%e30 #&#
\begin{equation}
g\bigl( y_{i+1}\rrvert y_{1}^{i}
\bigr)=C_{i}p_{\mathbf
{X}}(y_{i+1})\mathfrak{%
n} \bigl( \alpha\beta+m_{0},\beta,u ( y_{i+1} ) \bigr).
\label{gif}
\end{equation}
Here
%
%e31 #&#
\begin{eqnarray}
\alpha&=&t_{i}+\frac{\mu_{3}^{i}}{2s_{i}^{4} ( n-i-1 ) }, \label{alpha pour f(x)}
\\
\beta&=&s_{i}^{2} ( n-i-1 ) \label{beta pour f(x)},
\end{eqnarray}
and the $C_{i}$ is a normalizing constant.

Set
%
%e33 #&#
\begin{equation}
\label{gasigmamu} g_{u_{1,n}} \bigl( y_{1}^{k} \bigr)
:=g_{0}( y_{1}\rrvert y_{0})\prod
_{i=1}^{k-1}g\bigl( y_{i+1}
\rrvert y_{1}^{i}\bigr).
\end{equation}

%th8 #&#
\begin{theorem}
\label{ThmApproxsousf(x)} Assume  \textup{(\ref{k/nTENDSTO1})} and \textup{(\ref{pas trop
vite})} together with \textup{(\ref{E1})} and \textup{(\ref{E2})}. Then \textup{(i)}
\[
p_{u_{1,n}} \bigl( Y_{1}^{k} \bigr):=p \bigl(
\mathbf {X}_{1}^{k}=Y_{1}^{k}%
\llvert \mathbf{U}_{1,n}=u_{1,n} \bigr)
=g_{u_{1,n}}\bigl(Y_{1}^{k}\bigr)
\bigl(1+o_{P_{u_{1,n}}}\bigl(\varepsilon_{n} ( \log n ) ^{2}
\bigr)\bigr)
\]
and \textup{(ii)}
\[
p_{u_{1,n}} \bigl( Y_{1}^{k} \bigr)
=g_{u_{1,n}}\bigl(Y_{1}^{k}\bigr)
\bigl(1+o_{G_{u_{1,n}}}\bigl(\varepsilon_{n} ( \log n ) ^{2}
\bigr)\bigr).
\]
\end{theorem}

\begin{pf}
We only sketch the initial step of the proof of (i), which rapidly follows
the same path as that in Theorem \ref{Thm:Approx_local_cond_density}.

As in the proof of Theorem \ref{Thm:Approx_local_cond_density}, evaluate
\begin{eqnarray*}
&& p (  \mathbf{X}_{i+1}=Y_{i+1}\rrvert
\mathbf{U}%
_{i+1,n}=u_{1,n}-U_{1,i} )
\\
&&\qquad =p_{\mathbf{X}} ( \mathbf{X}_{i+1}=Y_{i+1} )
\frac{p_{\mathbf
{U}%
} ( \mathbf{U}_{i+2,n}=u_{1,n}-U_{1,i+1} ) }{p_{\mathbf
{U}} (
\mathbf{U}_{i+1,n}=u_{1,n}-U_{1,i} ) }
\\
&&\qquad =\frac{p_{\mathbf{X}} ( \mathbf{X}_{i+1}=Y_{i+1} )
}{p_{\mathbf{U}%
} ( \mathbf{U}_{i+1}=u ( Y_{i+1} )  ) }p_{\mathbf{U}%
} \bigl( \mathbf{U}_{i+1}=u (
Y_{i+1} ) \bigr) \frac{p_{\mathbf
{U}%
} ( \mathbf{U}_{i+2,n}=u_{1,n}-U_{1,i+1} ) }{p_{\mathbf
{U}} (
\mathbf{U}_{i+1,n}=u_{1,n}-U_{1,i} ) }.
\end{eqnarray*}

Use the invariance of the conditional density with respect to the
change of
sampling defined by $\pi_{\mathbf{U}}^{m_{i}}$ to obtain
\begin{eqnarray*}
&& p (  \mathbf{X}_{i+1}=Y_{i+1}\rrvert
\mathbf{U}%
_{i+1,n}=u_{1,n}-U_{1,i} )
\\
&&\qquad =\frac{p_\mathbf{X} ( \mathbf{X}_{i+1}=Y_{i+1} ) }{p_\mathbf
{U}%
( \mathbf{U}_{i+1}=u ( Y_{i+1} )  ) }\pi_{\mathbf{U}%
}^{m_{i}} \bigl(
\mathbf{U}_{i+1}=u ( Y_{i+1} ) \bigr) \frac
{\pi_{%
\mathbf{U}}^{m_{i}} ( \mathbf{U}_{i+2,n}=u_{1,n}-U_{1,i+1} )
}{\pi
_{\mathbf{U}}^{m_{i}} ( \mathbf{U}_{i+1,n}=u_{1,n}-U_{1,i} ) }
\\
&&\qquad =p_\mathbf{X} ( \mathbf{X}_{i+1}=Y_{i+1} )
\frac{%
e^{t_{i}u(Y_{i+1})}}{\phi_{\mathbf{U}}(t_{i})}\frac{\pi_{\mathbf{U}%
}^{m_{i}} ( \mathbf{U}_{i+2,n}=u_{1,n}-U_{1,i+1} ) }{\pi
_{\mathbf{U%
}}^{m_{i}} ( \mathbf{U}_{i+1,n}=u_{1,n}-U_{1,i} ) },
\end{eqnarray*}
and proceed via the Edgeworth expansions in the above expression,
following verbatim the proof of Theorem \ref{Thm:Approx_local_cond_density}.
We omit details. The proof of (ii) follows from Lemma \ref%
{Lemma:commute_from_p_n_to_g_n}.
\end{pf}

We turn to a consequence of Theorem \ref{ThmApproxsousf(x)}.

For all $\delta>0$, let
\[
E_{k,\delta}:= \biggl\{ y_{1}^{k}\in
\mathbb{R}^{k}\dvtx \biggl\llvert \frac{%
p_{u_{1,n}} ( y_{1}^{k} ) -g_{u_{1,n}} ( y_{1}^{k} )
}{%
g_{u_{1,n}} ( y_{1}^{k} ) }\biggr\rrvert <
\delta \biggr\},
\]
which by Theorem \ref{ThmApproxsousf(x)} satisfies
%
%e34 #&#
\begin{equation}
\lim_{n\rightarrow\infty}P_{u_{1,n}} ( E_{k,\delta} ) =\lim
_{n\rightarrow\infty}G_{u_{1,n}} ( E_{k,\delta} ) =1. \label{limPu1,nGu1,n}
\end{equation}
It holds that
\begin{eqnarray*}
&&\sup_{C\in\mathcal{B} ( \mathbb{R}^{k} ) }\bigl\llvert P_{u_{1,n}} ( C\cap
E_{k,\delta} ) -G_{u_{1,n}} ( C\cap E_{k,\delta} ) \bigr\rrvert
\\
&&\qquad\leq\delta\sup_{C\in\mathcal{B} ( \mathbb{R}^{k} ) }\int_{C\cap
E_{k,\delta}}g_{u_{1,n}}
\bigl( y_{1}^{k} \bigr) \,dy_{1}^{k}\leq
\delta.
\end{eqnarray*}
By (\ref{limPu1,nGu1,n})
\[
\sup_{C\in\mathcal{B} ( \mathbb{R}^{k} ) }\bigl\llvert P_{u_{1,n}} ( C\cap
E_{k,\delta} ) -P_{u_{1,n}} ( C ) \bigr\rrvert <\eta_{n}
\]
and
\[
\sup_{C\in\mathcal{B} ( \mathbb{R}^{k} ) }\bigl\llvert G_{u_{1,n}} ( C\cap
E_{k,\delta} ) -G_{u_{1,n}} ( C ) \bigr\rrvert <\eta_{n}
\]
for some sequence $\eta_{n}\rightarrow0$; hence
\[
\sup_{C\in\mathcal{B} ( \mathbb{R}^{k} ) }\bigl\llvert P_{u_{1,n}} ( C )
-G_{u_{1,n}} ( C ) \bigr\rrvert <\delta +2\eta_{n}
\]
for all positive $\delta$. Applying Scheff\'{e}'s lemma, we have proved:

%th9 #&#
\begin{theorem}
Under the hypotheses of Theorem \ref{ThmApproxsousf(x)} the total variation
distance between $P_{u_{1,n}}$ and $G_{u_{1,n}}$ goes to $0$ as $n$
tends to
infinity, and
\[
\lim_{n\rightarrow\infty}\int\bigl\llvert p_{u_{1,n}} \bigl(
y_{1}^{k} \bigr) -g_{u_{1,n}} \bigl(
y_{1}^{k} \bigr) \bigr\rrvert \,dy_{1}^{k}=0.
\]
\end{theorem}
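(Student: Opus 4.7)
The plan is to upgrade the $1+o_P$-type relative error bound of Theorem \ref{ThmApproxsousf(x)} (in both the $P_{u_{1,n}}$ and $G_{u_{1,n}}$ senses) to a total variation statement on all of $\mathbb{R}^k$, then pass from total variation convergence to $L^1$ convergence of densities via Scheff\'{e}'s lemma. The key observation is that total variation distance is the supremum over measurable sets, so it suffices to control the difference $|P_{u_{1,n}}(C)-G_{u_{1,n}}(C)|$ uniformly in $C$.

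First I would introduce, for each $\delta>0$, the set
\begin{equation*}
E_{k,\delta}:=\left\{y_{1}^{k}\in\mathbb{R}^{k}:\left|\frac{p_{u_{1,n}}(y_{1}^{k})-g_{u_{1,n}}(y_{1}^{k})}{g_{u_{1,n}}(y_{1}^{k})}\right|<\delta\right\}
\end{equation*}
on which the relative error of the approximation is explicitly controlled. By parts (i) and (ii) of Theorem \ref{ThmApproxsousf(x)}, the relative error is $o_{P_{u_{1,n}}}(\epsilon_n(\log n)^2)$ and also $o_{G_{u_{1,n}}}(\epsilon_n(\log n)^2)$; in particular, for any fixed $\delta>0$, both $P_{u_{1,n}}(E_{k,\delta})\to 1$ and $G_{u_{1,n}}(E_{k,\delta})\to 1$ as $n\to\infty$. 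Call the error $\eta_n$ (a common upper bound for $1-P_{u_{1,n}}(E_{k,\delta})$ and $1-G_{u_{1,n}}(E_{k,\delta})$ at fixed $\delta$), so that $\eta_n\to 0$.

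Next, for any Borel set $C\subset\mathbb{R}^{k}$, on the good set we have the pointwise bound $|p_{u_{1,n}}-g_{u_{1,n}}|\le \delta\,g_{u_{1,n}}$, hence
\begin{equation*}
\left|P_{u_{1,n}}(C\cap E_{k,\delta})-G_{u_{1,n}}(C\cap E_{k,\delta})\right|\le \delta\int_{C\cap E_{k,\delta}} g_{u_{1,n}}(y_1^k)\,dy_1^k\le \delta,
\end{equation*}
uniformly in $C$. On the complement, both $P_{u_{1,n}}(C\cap E_{k,\delta}^c)$ and $G_{u_{1,n}}(C\cap E_{k,\delta}^c)$ are bounded by $\eta_n$. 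Combining by the triangle inequality yields $\sup_{C}|P_{u_{1,n}}(C)-G_{u_{1,n}}(C)|\le \delta+2\eta_n$. Letting $n\to\infty$ and then $\delta\downarrow 0$ shows that the total variation distance between $P_{u_{1,n}}$ and $G_{u_{1,n}}$ tends to $0$.

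Finally, since both $P_{u_{1,n}}$ and $G_{u_{1,n}}$ have densities with respect to Lebesgue measure on $\mathbb{R}^{k}$, Scheff\'{e}'s lemma (equivalently, the identification of the total variation distance with one half of the $L^1$ distance of densities) converts the total variation convergence into $\int |p_{u_{1,n}}-g_{u_{1,n}}|\,dy_1^k\to 0$, which is the second assertion of the theorem. The only mildly delicate point is checking that the exceptional probabilities $\eta_n$ can be taken uniform in the set $C$ and do go to $0$ simultaneously under both measures; this is precisely why having Theorem \ref{ThmApproxsousf(x)}(ii) (approximation under $G_{u_{1,n}}$) in addition to (i) is essential—it supplies the control of $G_{u_{1,n}}(E_{k,\delta}^c)$, whereas (i) alone would leave the approximating measure uncontrolled on the atypical paths.
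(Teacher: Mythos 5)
Your proposal is correct and follows essentially the same route as the paper: the same set $E_{k,\delta}$ of controlled relative error, the same use of parts (i) and (ii) of Theorem \ref{ThmApproxsousf(x)} to show both $P_{u_{1,n}}(E_{k,\delta})$ and $G_{u_{1,n}}(E_{k,\delta})$ tend to $1$, the same uniform bound $\delta+2\eta_n$ on $\sup_C|P_{u_{1,n}}(C)-G_{u_{1,n}}(C)|$, and the same appeal to Scheff\'{e}'s lemma for the $L^1$ statement.
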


%re10 #&#
\begin{remark}
This result is to be compared with Theorem 1.6 in \cite
{DiaconisFreedman1988} and Theorem 2.15 in \cite{DemboZeitouni1996}
which provides a rate for this convergence for small $k$'s under some
additional conditions on the moment generating function of $\mathbf{U}$.
\end{remark}

%s3.1.1 #&#
%s3.1.1 ###
\subsubsection{\texorpdfstring{Approximation under other sampling schemes.}
{Approximation under other sampling schemes}}\label{sec3.1.1}

In statistical applications the r.v.'s $Y_{i}$'s in Theorems \ref
{Thm:Approx_local_cond_density} and \ref{ThmApproxsousf(x)} may in
certain cases be sampled under some other distribution than $P_{ na }$ or
$G_{ na }$.

Consider the following situation.

The model consists of an exponential family $\mathcal{P}:=\{P_{\theta
,\eta
},(\theta,\eta)\in\mathcal{N}\}$ defined on~$\mathbb{R}$ with canonical
parametrization $(\theta,\eta)$ and sufficient statistics $ (
t,u ) $ defined on $\mathbb{R}$ through the densities
%
%e35 #&#
\begin{equation}
p_{\theta,\eta}(x):=\frac{dP_{\theta,\eta} ( x ) }{dx}=\exp \bigl( \theta t(x)+\eta u(x)-K(
\theta,\eta) \bigr) h(x). \label{modexp}
\end{equation}

We assume that both $\theta$ and $\eta$ belong to $\mathbb{R}$. The
natural parameter space $\mathcal{N}$ is a convex set in $\mathbb{R}^{2}$
defined as the domain of
\[
k(\theta,\eta):=\exp \bigl(K(\theta,\eta) \bigr)=\int\exp \bigl(\theta t(x)+\eta
u(x) \bigr)h(x)\,dx.
\]

For the statistician, $\theta$ is the parameter of interest whereas
$\eta$
is a nuisance one. The unknown parameter of the i.i.d. sample $\mathbf
{X}%
_{1}^{n}:= ( \mathbf{X}_{1},\ldots,\mathbf{X}_{n} ) $ observed as
$%
X_{1}^{n}:= ( X_{1},\ldots,X_{n} ) $ is  $(\theta_{T},\eta_{T})$.

Conditioning on a sufficient statistic for the nuisance parameter produces
a new exponential family which is free of $\eta$. For any $\theta$
denote $%
\widehat{\eta}_{\theta}$ the MLE of $\eta_{T}$ in model~(\ref{modexp})
parametrized in $\eta$, when $\theta$ is fixed. A classical solution for
the estimation of $\theta_{T}$ consists in maximizing the likelihood
\[
L \bigl(  \theta\rrvert X_{1}^{n} \bigr):=\prod
_{i=1}^{n}p_{\theta,\widehat{\eta}_{\theta}}(X_{i})
\]
with respect to $\theta$. This approach produces satisfactory results
when $\widehat{%
\eta}_{\theta}$ is a consistent estimator of $\eta_{\theta}$. However
for curved exponential families, it may happen that for some $\theta$ the
likelihood
\[
L_{\theta} \bigl(  \eta\rrvert X_{1}^{n}
\bigr):=\prod_{i=1}^{n}p_{\theta,\eta}(X_{i})
\]
is multimodal with respect to $\eta$ which may produce misestimation
in $%
\widehat{\eta}_{\theta}$, leading in turn to inconsistency in the resulting
estimates of $\theta_{T}$; see \cite{Sundberg2009}.

Consider $g_{u_{1,n}, ( \theta,\eta ) }$ defined in (\ref
{gasigmamu}) for fixed $ ( \theta,\eta ) $, with
$u_{1,n}:=u(X_{1})+\cdots+u(X_{n})$. Since $u_{1,n}$ is sufficient for
$\eta$, $p_{u_{1,n}, ( \theta,\eta ) }$ is independent of
$\eta$ for all $k$. Assume at present that the density
$g_{u_{1,n}, ( \theta,\eta ) }$ on $\mathbb{R}^{k}$
approximates $p_{u_{1,n}, ( \theta,\eta ) }$ on the sample
$X_{1}^{n}$ generated under $ ( \theta_{T},\eta_{T} )$; it
follows then that inserting any value $\eta_{0}$ in~(\ref{gasigmamu})
does not change the value of the resulting likelihood
\[
L_{\eta_{0}} \bigl(  \theta\rrvert X_{1}^{k}
\bigr):=g_{u_{1,n}, ( \theta,\eta_{0} ) }(X_{i}).
\]
Optimizing $L_{\eta_{0}} (  \theta\rrvert
X_{1}^{k} ) $
with respect to $\theta$ produces a consistent estimator of $\theta
_{T}$. We refer to
\cite{BroniatowskiCaron2012Exp} for examples and discussion.

Let $\mathbf{Y}_{1}^{n}$ be i.i.d. copies of $\mathbf{Z}$ with
distribution $Q$ and density $q$; assume that $Q$ satisfies the Cram\'er
condition $\int
( \exp(tx) ) q(x)\,dx<\infty$ for $t$ in a nonempty
neighborhood of $0$. Let $\mathbf{V}_{1,n}:=u ( \mathbf{Y}_{1}
) +\cdots+u (
\mathbf{Y}_{n} )$, and define
\[
q_{u_{1,n}} \bigl( y_{1}^{k} \bigr):=q \bigl(
 \mathbf{Y}%
_{1}^{k}=y_{1}^{k}
\rrvert \mathbf{V}_{1,n}=u_{1,n} \bigr)
\]
with distribution $Q_{u_{1,n}}$. The following theorem then holds:

%th11 #&#
\begin{theorem}
\label{ThmApproxSousAutreTheta} Assume  \textup{(\ref{k/nTENDSTO1})} and \textup{(\ref{pas
trop vite})} together with \textup{(\ref{E1})} and \textup{(\ref{E2})}. Then, with the same
hypotheses and notation as in Theorem \ref{ThmApproxsousf(x)},
\[
p \bigl( \mathbf{X}_{1}^{k}=Y_{1}^{k}
\llvert \mathbf{U}_{1,n}=u_{1,n}%
 \bigr)
=g_{u_{1,n}}\bigl(Y_{1}^{k}\bigr)
\bigl(1+o_{Q_{u_{1,n}}}\bigl(\varepsilon _{n} ( \log n ) ^{2}
\bigr)\bigr).
\]

Also the total variation distance between $Q_{u_{1,n}}$ and $P_{u_{1,n}}$
goes to $0$ as $n$ tends to infinity.
\end{theorem}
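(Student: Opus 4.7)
My plan is to reduce this theorem to Theorem \ref{ThmApproxsousf(x)} by exploiting the invariance of the conditional law of $\mathbf{X}_{1}^{k}$ given $\mathbf{U}_{1,n}=u_{1,n}$ under exponential tilting in the direction $u$. Concretely, for any admissible $\lambda$, the tilted density $\pi_{u}^{\lambda}(x) = e^{\lambda u(x)}p_{\mathbf{X}}(x)/\phi_{\mathbf{U}}(\lambda)$ yields the same conditional distribution on $\mathbb{R}^{k}$ given $\{\mathbf{U}_{1,n}=u_{1,n}\}$ as does $p_{\mathbf{X}}$, since the tilting factors $\prod_{i}e^{\lambda u(y_{i})}/\phi_{\mathbf{U}}(\lambda)^{n}$ appear identically in numerator and denominator of the Bayes formula and cancel. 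In the motivating statistical context discussed just before the theorem (curved exponential family with $u$-sufficient nuisance parameter $\eta$), $q$ and $p_{\mathbf{X}}$ are two members of such a $u$-exponential family, and hence $Q_{u_{1,n}} = P_{u_{1,n}}$ as probability measures on $\mathbb{R}^{k}$; under this identification both assertions follow at once from Theorem \ref{ThmApproxsousf(x)}(i), the total-variation distance being literally zero.

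For general $Q$ satisfying the Cramer condition, the same computation must be done ``in probability'' rather than exactly. The plan is to rerun the proof of Theorem \ref{Thm:Approx_local_cond_density} starting from the decomposition
\[
p(\mathbf{X}_{1}^{k}=Y_{1}^{k}\mid \mathbf{U}_{1,n}=u_{1,n}) = \prod_{i=0}^{k-1} p(\mathbf{X}_{i+1}=Y_{i+1}\mid \mathbf{U}_{i+1,n}=u_{1,n}-U_{1,i}),
\]
and to Edgeworth-expand each factor around the tilted density $\pi_{\mathbf{U}}^{m_{i}}$ with $m_{i}=(u_{1,n}-U_{1,i})/(n-i)$. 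The Edgeworth expansions are analytic statements about $p_{\mathbf{X}}$ and are untouched by the change of sampling; what does change is the probabilistic control on $m_{i}$ and on $\max_{i\leq k}|u(Y_{i})|$, which must now be obtained under $Q_{u_{1,n}}$ rather than $P_{u_{1,n}}$. Since the conditioning event pins the empirical mean of $u$ along the trajectory essentially to $m_{0}=u_{1,n}/n$, a preliminary exponential tilt of $Q$ to mean $m_{0}$ brings the summands into the central-limit regime, after which the arguments underlying Lemma \ref{LemmaMaxm_in} and Lemma \ref{Lemma_max_X_i_under_conditioning} furnish the same $\epsilon_{n}(\log n)^{2}$ rate under $Q_{u_{1,n}}$.

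The total-variation conclusion then follows from Scheff\'e's lemma exactly as in the display-computation immediately preceding the theorem statement: part (i) of the present theorem together with Theorem \ref{ThmApproxsousf(x)}(i) yield $\|P_{u_{1,n}}-G_{u_{1,n}}\|_{TV}\to 0$ and $\|Q_{u_{1,n}}-G_{u_{1,n}}\|_{TV}\to 0$, and the triangle inequality closes the argument. The hard part is the probabilistic control invoked in the second paragraph: a priori $P_{u_{1,n}}$ and $Q_{u_{1,n}}$ can be far apart, so one cannot simply transfer the $P_{u_{1,n}}$-bounds from Theorem \ref{ThmApproxsousf(x)} to $Q_{u_{1,n}}$ through a likelihood-ratio argument, and the relevant maximal inequalities must be reproved under the new sampling scheme after the mean-matching tilt of $Q$.
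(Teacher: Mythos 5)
Your proposal is correct and follows essentially the same route as the paper: the paper's entire proof consists of the observation that it suffices to re-establish Lemmas \ref{LemmaMomentsunderE_n}, \ref{LemmaMaxm_in} and \ref{Lemma_max_X_i_under_conditioning} under the new sampling measure (possible because $Q$ satisfies the Cramer condition), the Edgeworth expansions being deterministic statements about $p_{\mathbf{X}}$ that are unaffected by the change of sampling. Your identification of exactly this reduction, your warning that the bounds cannot simply be transferred by a likelihood-ratio argument, and your Scheff\'{e}-plus-triangle-inequality step for the total-variation claim all match the paper's (much terser) argument.
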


\begin{pf}
It is enough to check that Lemmas \ref{LemmaMomentsunderE_n}, \ref%
{LemmaMaxm_in} and \ref{Lemma_max_X_i_under_conditioning} hold when
$\mathbf{%
Y}$ satisfies the Cram\'er condition.\vadjust{\goodbreak}
\end{pf}

%re12 #&#
\begin{remark}
In the previous discussion $Q=P_{\theta_{T,}\eta_{T}}$ and $\mathbf{X}
_{1}^{n}$ are independent copies of $\mathbf{X}$ with distribution $%
P_{\theta,\eta_{0}}$.
\end{remark}

%s3.2 #&#
%s3.2 ###
\subsection{For how long is the approximation valid?}\label{sec3.2}
\label{sec_how_far}

This section provides a rule leading to an effective choice of the crucial
parameter $k$ in order to achieve a given accuracy bound for the relative
error in Theorem \ref{ThmApproxsousf(x)}(ii). The accuracy of the
approximation is measured through
%
%e36 #&#
\begin{equation}
\operatorname{ERE}(k):=E_{G_{u_{1,n}}}1_{D_{k}} \bigl( Y_{1}^{k}
\bigr) \frac{%
p_{u_{1,n}} ( Y_{1}^{k} ) -g_{u_{1,n}} ( Y_{1}^{k} )
}{%
p_{u_{1,n}} ( Y_{1}^{k} ) } \label{ERE}
\end{equation}
and
%
%e37 #&#
\begin{equation}
\operatorname{VRE}(k):=\operatorname{Var}_{G_{u_{1,n}}}1_{D_{k}} \bigl( Y_{1}^{k}
\bigr) \frac{%
p_{u_{1,n}} ( Y_{1}^{k} ) -g_{u_{1,n}} ( Y_{1}^{k} )
}{%
p_{u_{1,n}} ( Y_{1}^{k} ) } \label{VRE}
\end{equation}
respectively, the expectation and the variance of the relative error of the
approximating scheme when evaluated on
\[
D_{k}:= \bigl\{ y_{1}^{k}\in\mathbb{R}^{k}
\mbox{ such that }\bigl\llvert g_{u_{1,n}}\bigl(y_{1}^{k}
\bigr)/p_{u_{1,n}} \bigl( y_{1}^{k} \bigr) -1\bigr
\rrvert <\delta_{n} \bigr\}
\]
with $\varepsilon_{n} ( \log n ) ^{2}/\delta_{n}\rightarrow0$
and $%
\delta_{n}\rightarrow0$; therefore $G_{u_{1,n}} ( D_{k} )
\rightarrow1$. The r.v.'s $Y_{1}^{k}$ are sampled under $%
g_{u_{1,n}}$. Note that the density $p_{u_{1,n}}$ is usually unknown. The
argument is somehow heuristic and informal; nevertheless the rule is simple
to implement and provides good results. We assume that the set $D_{k}$ can
be substituted by $\mathbb{R}^{k}$ in the above formulas, therefore assuming
that the relative error has bounded variance, which would require quite a
lot of work to be proved under appropriate conditions, but which seems to
hold, at least in all cases considered by the authors. We keep the above
notation omitting therefore any reference to $D_{k}$.

Consider a two-sigma confidence bound for the relative accuracy for a
given $%
k$, defining
\[
\operatorname{CI}(k):= \bigl[ \operatorname{ERE}(k)-2
\sqrt{\operatorname{VRE}(k)},
\operatorname{ERE}(k)+2\sqrt{\operatorname{VRE}(k)} \bigr].
\]

Let $\delta$ denote an acceptance level for the relative accuracy.
Accept $%
k $ until $\delta$ belongs to $\operatorname{CI}(k)$. For such $k$, the relative accuracy
is certified up to the level $5\%$ roughly.

The calculation of $\operatorname{VRE}(k)$ and $\operatorname{ERE}(k)$ should be carried out as follows.

Write
\begin{eqnarray*}
\operatorname{VRE}(k)^{2}& =&E_{P_{\mathbf{X}}} \biggl( \frac{g_{u_{1,n}}^{3} (
Y_{1}^{k} ) }{p_{u_{1,n}} ( Y_{1}^{k} ) ^{2}p_{\mathbf{X}%
} ( Y_{1}^{k} ) } \biggr)
\\
&&{} -E_{P_{\mathbf{X}}} \biggl( \frac{g_{u_{1,n}}^{2} ( Y_{1}^{k}
)} {%
p_{u_{1,n}} (Y_{1}^{k} )p_{\mathbf{X}} (Y_{1}^{k} )}%
\biggr)^{2}
\\
& =:&A-B^{2}.
\end{eqnarray*}

By the Bayes formula,
%
%e38 #&#
\begin{equation}
p_{u_{1,n}} \bigl( Y_{1}^{k} \bigr)
=p_{\mathbf{X}} \bigl( Y_{1}^{k} \bigr)
\frac{np ( \mathbf{U}_{k+1,n}/(n-k)=m(t_{k}) ) }{ (
n-k )
p ( \mathbf{U}_{1,n}/n=u_{1,n}/n ) }. \label{Jensen_dans_k_limite}
\end{equation}
The following lemma holds; see \cite{Jensen1995} and \cite{Richter1957}.

%le13 #&#
\begin{lemma}
\label{LemmaJensen} Let $\mathbf{U}_{1},\ldots,\mathbf{U}_{n}$ be i.i.d. random
variables with common density $p_{\mathbf{U}}$ on $\mathbb{R}$ and
satisfying the Cram\'er conditions with m.g.f. $\phi_{\mathbf{U}}$. Then with
$m(t)=u$,
\[
p ( \mathbf{U}_{1,n}/n=u ) =\frac{\sqrt{n}\phi_{\mathbf{U}%
}^{n}(t)\exp(-ntu)}{s(t)\sqrt{2\pi}} \bigl( 1+o(1) \bigr)
\]
when $|u|$ is bounded.
\end{lemma}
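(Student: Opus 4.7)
The plan is to prove this as a standard Esscher-tilt plus local central limit theorem argument, which is exactly the classical Bahadur--Rao / Cramér type local large deviation expansion.

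First, I would introduce the Esscher tilted density
\begin{equation*}
\pi_{\mathbf{U}}^{u}(v) := \frac{e^{tv}}{\phi_{\mathbf{U}}(t)} p_{\mathbf{U}}(v),
\end{equation*}
where $t$ is defined by $m(t)=u$ (existence and uniqueness of $t$ on a neighbourhood of interest follow from the usual steepness properties of $\phi_{\mathbf{U}}$). Under $\pi_{\mathbf{U}}^{u}$, the variable $\mathbf{U}$ has mean $u$ and variance $s^{2}(t)$. Writing $p_{\mathbf{U}}(v) = \phi_{\mathbf{U}}(t) e^{-tv} \pi_{\mathbf{U}}^{u}(v)$ and using independence, the density of $\mathbf{U}_{1,n}$ at any real $w$ factorises as
\begin{equation*}
p(\mathbf{U}_{1,n}=w) = \phi_{\mathbf{U}}^{n}(t)\, e^{-tw}\, \widetilde{p}_{n}(w),
\end{equation*}
where $\widetilde{p}_{n}$ denotes the density of $\mathbf{U}_{1,n}$ when the summands are i.i.d. with density $\pi_{\mathbf{U}}^{u}$. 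Specialising this identity at $w=nu$ is the key step, because $nu$ is precisely the expectation of $\mathbf{U}_{1,n}$ under $\pi_{\mathbf{U}}^{u}$-sampling.

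Next, I would apply the local central limit theorem to $\widetilde{p}_{n}$ at the mean. Since under $\pi_{\mathbf{U}}^{u}$ the sum $\mathbf{U}_{1,n}$ has mean $nu$ and variance $ns^{2}(t)$, and since the characteristic function of $\pi_{\mathbf{U}}^{u}$ equals $\phi_{\mathbf{U}}(t+i\xi)/\phi_{\mathbf{U}}(t)$ (hence inherits the $L^{r}$ assumption made on the characteristic function of $\mathbf{U}$ after sufficiently many convolutions), the classical local CLT in the form recalled in Remark \ref{Remark Edgeworth array} yields
\begin{equation*}
\widetilde{p}_{n}(nu) = \frac{1}{s(t)\sqrt{2\pi n}}\left( 1 + o(1) \right).
\end{equation*}
Substituting and converting from the density of $\mathbf{U}_{1,n}$ at $nu$ to the density of $\mathbf{U}_{1,n}/n$ at $u$ via the Jacobian factor $n$ gives
\begin{equation*}
p(\mathbf{U}_{1,n}/n=u) = n\,\phi_{\mathbf{U}}^{n}(t)\,e^{-ntu}\,\widetilde{p}_{n}(nu) = \frac{\sqrt{n}\,\phi_{\mathbf{U}}^{n}(t)\,e^{-ntu}}{s(t)\sqrt{2\pi}}\bigl(1+o(1)\bigr),
\end{equation*}
which is exactly the claim.

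The only delicate point is to see that the $o(1)$ remainder can be taken uniform in $u$ on bounded sets, since in the statement $u$ is not fixed but ranges over a bounded range (and in the applications of the lemma, for instance in (\ref{Jensen_dans_k_limite}), $u$ will depend on $n$ through the observations). This is where I would have to work a bit: for $|u|$ bounded, $t=m^{-1}(u)$ varies over a compact subset $K$ of the interior of $\mathrm{dom}\,\phi_{\mathbf{U}}$, the cumulants $s^{2}(t),\mu_{3}(t),\ldots$ of $\pi_{\mathbf{U}}^{u}$ are bounded uniformly in $t\in K$, and $s(t)$ is bounded away from $0$. The uniform bound $|\phi_{\mathbf{U}}(t+i\xi)/\phi_{\mathbf{U}}(t)|\le q_{\delta}<1$ for $|\xi|\ge\delta$ (uniformly for $t\in K$) is the analogue of Feller's inequality (2.5) invoked in Remark \ref{Remark Edgeworth array}, and together with the $L^{r}$ assumption on $\widehat{p}_{\mathbf{U}}$ after $r$ convolutions, it makes the standard Feller proof of the local CLT go through with a remainder that is uniform in $t\in K$, hence in $u$ on bounded sets. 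This uniformity is really the only obstacle; everything else is tautological from the tilting identity.
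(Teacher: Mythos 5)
Your proof is correct and is exactly the standard Esscher-tilt plus local central limit theorem argument; the paper itself gives no proof of this lemma but simply cites Richter (1957) and Jensen (1995), whose derivations follow the same route you describe, including the uniformity over $t=m^{-1}(u)$ in a compact subset of the domain of $\phi_{\mathbf{U}}$ that you correctly flag as the only delicate point.
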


Introduce
\[
D:= \biggl[ \frac{\pi_{\mathbf{U}}^{m_{0}}(m_{0})}{p_{\mathbf{U}}(m_{0})} %%
\biggr] ^{n}
\]
and
\[
N:= \biggl[ \frac{\pi_{\mathbf{U}}^{m_{k}} ( m_{k} )
}{p_{\mathbf{U}%
} ( m_{k} ) } \biggr] ^{ ( n-k ) }
\]
with $m_{k}$ defined in (\ref{mif}) and $m_{0}=u_{1,n}/n$. Define $t$
by $%
m(t)=m_{0}$. By (\ref{Jensen_dans_k_limite}) and Lemma \ref
{LemmaJensen} it
holds that
\[
p_{u_{1,n}} \bigl( Y_{1}^{k} \bigr) =\sqrt{
\frac{n}{n-k}}p_{\mathbf
{X}} \bigl( Y_{1}^{k} \bigr)
\frac{D}{N}\frac{s(t)}{s(t_{k})} \bigl( 1+o_{P_{u_{1,n}}}(1) \bigr).
\]
The approximation of $A$ is obtained through Monte Carlo simulation. Define
%
%e39 #&#
\begin{equation}
A \bigl( Y_{1}^{k} \bigr):=\frac{n-k}{n} \biggl(
\frac{g_{u_{1,n}} (
Y_{1}^{k} ) }{p_{\mathbf{X}} ( Y_{1}^{k} ) } \biggr) ^{3} \biggl( \frac{N}{D} \biggr)
^{2}\frac{s^{2}(t_{k})}{s^{2}(t)} \label{A(l)},
\end{equation}
and simulate $L$ i.i.d. samples $Y_{1}^{k}(l)$, each one made of $k$ i.i.d.
replicates under $p_{\mathbf{X}}$. Set
\[
\widehat{A}:=\frac{1}{L}\sum_{l=1}^{L}A
\bigl( Y_{1}^{k}(l) \bigr).
\]
We use the same approximation for $B$. Define
%
%e40 #&#
\begin{equation}
B \bigl( Y_{1}^{k} \bigr):=\sqrt{\frac{n-k}{n}}
\biggl( \frac{%
g_{u_{1,n}} ( Y_{1}^{k} ) }{p_{\mathbf{X}} (
Y_{1}^{k} ) }%
\biggr) ^{2} \biggl(
\frac{N}{D} \biggr) \frac{s(t_{k})}{s(t)} \label{B(l)}
\end{equation}
and
\[
\widehat{B}:=\frac{1}{L}\sum_{l=1}^{L}B
\bigl( Y_{1}^{k}(l) \bigr)
\]
with the same $Y^k_l(l)$'s as above.

Set
%
%e41 #&#
\begin{equation}
\label{VREbar} \overline{\operatorname{VRE}}(k):=\widehat{A}- ( \widehat{B} ) ^{2},
\end{equation}
which is a suitable approximation of $\operatorname{VRE}(k)$.

The curve $k\rightarrow\overline{\operatorname{ERE}}(k)$ is a proxy for (\ref{ERE})
and is
obtained through
\[
\overline{\operatorname{ERE}}(k):=1-\widehat{B}.
\]
A proxy of $\operatorname{CI}(k)$ can now be defined as
%
%e42 #&#
\begin{equation}
\label{CIbarre} \overline{\operatorname{CI}}(k):= \bigl[ \overline{\operatorname{ERE}}(k)-2\sqrt{
\overline{\operatorname{VRE}}(k)}, \overline{\operatorname{ERE}}(k)+2\sqrt{\overline{\operatorname{VRE}}(k)} \bigr].
\end{equation}

We now check the validity of the above approximation, comparing $%
\overline{\operatorname{CI}}(k)$ with $\operatorname{CI}(k)$ on a toy case.

Consider $u(x)=x$. The case when $p_{\mathbf{X}}$ is a centered exponential
distribution with variance $1$ allows for an explicit evaluation of $\operatorname{CI}(k)$
making no use of Lemma~\ref{LemmaJensen}. The conditional density $p_{ na }$
is calculated analytically, the density $g_{ na }$ is obtained through
(\ref%
{g_a}), hence providing a benchmark for our proposal. The terms
$\widehat{A}$
and $\widehat{B}$ are obtained by Monte Carlo simulation following the
algorithm presented below. Figures~\ref{table:choix_de_k_GD_Approx_100},
\ref{table:choix_de_k_GD_Vrai_100} and \ref
{table:choix_de_k_GD_Approx_1000}%
, \ref{table:choix_de_k_GD_Vrai_1000} show the increase in $\delta$
w.r.t. $%
k$ in the large deviation range, with $a$ such that $P (\mathbf
{S}_{1,n}> na  ) \simeq10^{-8}$. We have considered two cases, when
$n=100$ and when $n=1000$. These figures show that the approximation
scheme is quite accurate, since the relative error is fairly small.
Also they show that $\overline{\operatorname{ERE}}$ and $\overline{\operatorname{CI}}$ provide good
tools for the assessing the value of $k$.

Algorithms \ref{algo_g_s} and \ref{algo_calcul_k} produce the curve $%
k\rightarrow\overline{\operatorname{CI}}(k)$. The resulting $k=k_{\delta}$ is the longest
run length for which $g_{u_{1,n}}$ a good proxy for $p_{u_{1,n}}$.

\begin{algorithm}[t]
\caption{Evaluation of $g_{u_{1,n}}(y_{1}^{k})$.}
\label{algo_g_s}
\SetKwInOut{Input}{Input}\SetKwInOut{Output}{Output}
\SetKwInOut{Initialization}{Initialization}\SetKwInOut{Procedure}{Procedure}
\SetKwInOut{Return}{Return}

\Input{$y_{1}^{k}$, $p_{\mathbf{X}}$, $n$, $u_{1,n}$}
\Output{$g_{u_{1,n}} ( y_{1}^{k} )$}
\BlankLine
\Initialization{\\
$t_{0}\leftarrow m^{-1} (m_{0} ) $;\\
$g_{0}( y_{1}\rrvert  y_{0})\leftarrow$ (\ref{tilte_pour_x0});
}
\Procedure{\\
\For{$i\leftarrow1$ \KwTo$k-1$}{
$m_{i} \leftarrow$ (\ref{mif});\\
$t_{i} \leftarrow m^{-1}(m_{i})$ $*$;\\
$\alpha\leftarrow$ (\ref{alpha pour f(x)});\\
$\beta\leftarrow$ (\ref{beta pour f(x)});\\
Calculate $C_{i}$;\\
$g( y_{i+1}\rrvert  y_{1}^{i})\leftarrow$ (\ref{gif});
}
Compute $g_{u_{1,n}} ( y_{1}^{k} ) \leftarrow$ (\ref{gasigmamu});
}
\Return{$g_{u_{1,n}}(y_{1}^{k})$}
\end{algorithm}

The calculation of $g_{u_{1,n}} ( y_{1}^{k} ) $ above requires the
value of
\[
C_{i}= \biggl( \int p_{\mathbf{X}}(x)\mathfrak{n} \bigl( \alpha
\beta +m_{0},\beta,u(x) \bigr) \,dx \biggr) ^{-1}.
\]
This can be done through Monte Carlo simulation.

\begin{algorithm}[t]
\caption{Calculation of $k_{\delta}$.}
\label{algo_calcul_k}
\SetKwInOut{Input}{Input}\SetKwInOut{Output}{Output}
\SetKwInOut{Initialization}{Initialization}\SetKwInOut{Procedure}{Procedure}
\SetKwInOut{Return}{Return}

\Input{ $p_{\mathbf{X}}$, $\delta$, $n$, $u_{1,n}$, $L$}
\Output{$k_{\delta}$}
\BlankLine
\Initialization{$k=1$}
\Procedure{\\
\While{$\delta\notin\overline{\operatorname{CI}}(k)$}{
\For{$l\leftarrow1$ \KwTo$L$}{
Simulate $Y_{1}^{k}(l)$ i.i.d. with density $p_{\mathbf{X}}$;\\
$A ( Y_{1}^{k}(l) ):={}$(\ref{A(l)}) using Algorithm \ref{algo_g_s};\\
$B ( Y_{1}^{k}(l) ):={}$(\ref{B(l)}) using Algorithm \ref{algo_g_s};
}
Calculate $\overline{\operatorname{CI}}(k)\leftarrow{}$(\ref{CIbarre});

$k:=k+1$;
}
}
\Return{$k_{\delta}:=k$}
\end{algorithm}
\begin{algorithm}[b]
\caption{Simulation of $Y$ with density proportional to $p(x)\mathfrak
{n} ( \mu,\sigma^{2},x ) $.}
\label{algo_simu_Y}
\SetKwInOut{Input}{Input}\SetKwInOut{Output}{Output}
\SetKwInOut{Initialization}{Initialization}\SetKwInOut{Procedure}{Procedure}
\SetKwInOut{Return}{Return}
\Input{$p$, $\mu$, $\sigma^{2}$}
\Output{$Y$}
\Initialization{\\
Select a density $f$ on $ [ 0,1 ] $ and a positive constant
$K$ %\]
such that $p ( \mathfrak{N}^{-1}(x) ) \leq Kf(x)$ for all $x$
in $ [ 0,1 ] $
}
\Procedure{ \While{$Z<$ $p ( \mathfrak{N}^{-1}(X) ) $}{

Simulate $X$ with density $f$;

Simulate $U$ uniform on $ [ 0,1 ] $ independent of $X$;

Compute $Z:=KUf(X)$;
}
}
\Return{$Y:=\mathfrak{N}^{-1}(X)$}
\end{algorithm}

%re14 #&#
\begin{remark}
Solving $t_{i}=m^{-1}(m_{i})$ might be difficult. It may happen that the
inverse function of $m$ is at hand, but even when $p_{\mathbf{X}}$ is the
Weibull density and $u(x)=x$, this is not the case. We can replace step
$%
\ast$ by
\[
t_{i+1}:=t_{i}-\frac{ ( m ( t_{i} ) +u_{i} ) }{ (
n-i ) s^{2} ( t_{i} ) }.
\]
Indeed since
\[
m(t_{i+1})-m(t_{i})=-\frac{1}{n-i} \bigl(
m(t_{i})+u_{i} \bigr)
\]
use a first order approximation to derive that $t_{i+1}$ can be substituted
by $\tau_{i+1}$ defined as
\[
\tau_{i+1}:=t_{i}-\frac{1}{ ( n-i ) s^{2}(t_{i})} \bigl(
m(t_{i})+u_{i} \bigr).
\]
When $\lim_{n\rightarrow\infty}u_{1,n}/n=Eu ( \mathbf{X} )
$, the
values of the function $s^{2}(\cdot)$ are close to $\operatorname{Var}[u ( \mathbf
{X}%
) ]$, and the above approximation is appropriate. For the large deviation
case, the same argument applies, since $s^{2}(t_{i})$ keeps close to $%
s^{2}(t^{a})$.
\end{remark}

\begin{algorithm}[t]
\SetKwInOut{Input}{Input}\SetKwInOut{Output}{Output}
\SetKwInOut{Initialization}{Initialization}\SetKwInOut{Procedure}{Procedure}
\SetKwInOut{Return}{Return}

\Input{$p_{\mathbf{X}}$, $\delta$, $n$, $u_{1,n}$}
\Output{$Y_{1}^{k}$}
\Initialization{\\
Set $k\leftarrow k_{\delta}$ with Algorithm \ref{algo_calcul_k};

$t_{0}\leftarrow =m^{-1}(m_{0})$;
}
\Procedure{\\
Simulate $Y_{1}$ with density (\ref{tilte_pour_x0});

$u_{1,1}\leftarrow u(Y_{1})$;

\For{$i\leftarrow 1$ \KwTo $k-1$}{

$m_{i}\leftarrow{}$(\ref{mif});

$t_{i}\leftarrow m^{-1}(m_{i})$;

$\alpha\leftarrow{}$(\ref{alpha pour f(x)});

$\beta\leftarrow{}$(\ref{beta pour f(x)});

Simulate $Y_{i+1}$ with density $g( y_{i+1}\vert y_{1}^{i})$
using Algorithm \ref{algo_simu_Y};

$u_{1,i+1}\leftarrow u_{1,i}+u(Y_{i+1})$;
}
}
\Return{$Y_{1}^{k}$}
\caption{Simulation of a sample $Y_{1}^{k}$ with density
$g_{u_{1,n}}$.}\label{algo_simu_Y_1_k}
\end{algorithm}

%s3.2.1 #&#
%s3.2.1 ###
\subsubsection{\texorpdfstring{Simulation of typical paths of a random walk under a conditioning
point.}{Simulation of typical paths of a random walk under a conditioning
point}}\label{sec3.2.1}

By Theorem \ref{ThmApproxsousf(x)}(ii), $g_{u_{1,n}}$ and the density
of $%
p_{u_{1,n}}$ approach each other on a family of subsets of $\mathbb{R}^{k}$
which contain the typical paths of the random walk under the conditional
density with probability going to $1$ as $n$ increases. By Lemma \ref%
{Lemma:commute_from_p_n_to_g_n} large sets under $P_{u_{1,n}}$ are also
large sets under $G_{u_{1,n}}$. It follows that long runs of typical paths
under $p_{u_{1,n}}$ can be simulated as typical paths under $g_{u_{1,n}}$
defined in (\ref{gasigmamu}) at least for large $n$.

The simulation of a sample $X_{1}^{k}$ with $g_{u_{1,n}}$ can be fast
and easy when $\lim_{n\rightarrow\infty}u_{1,n}/n=Eu ( \mathbf
{X} )$.
Indeed the r.v. $\mathbf{X}_{i+1}$ with density $g (
x_{i+1}|x_{1}^{i} ) $ is obtained through a standard
acceptance-rejection algorithm. The values of the parameters which
appear in the
Gaussian component of $g ( x_{i+1}|x_{1}^{i} ) $ in (\ref
{g-i}) are
easily calculated, and the dominating density can be chosen for all $i$
as $%
p_{\mathbf{X}}$. The constant in the acceptance rejection algorithm is
then $%
1/\sqrt{2\pi\beta}$. This is in contrast with the case when the
conditioning value is in the range of a large deviation event, that is,
$%
\lim_{n\rightarrow\infty}u_{1,n}/n\neq Eu ( \mathbf{X} ) $, which
appears in a natural way in importance sampling estimation for rare event
probabilities; then MCMC techniques can be used.

%f5 #&#
%f5 ###
\begin{figure}

\includegraphics{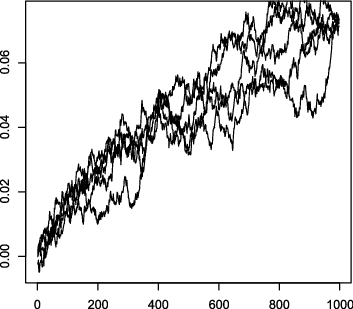}

\caption{Trajectories in the normal case for $P_{n}=10^{-2}$.}\label
{table:traj_X_Norm_MD}
\end{figure}

%f6 #&#
%f6 ###
\begin{figure}[b]

\includegraphics{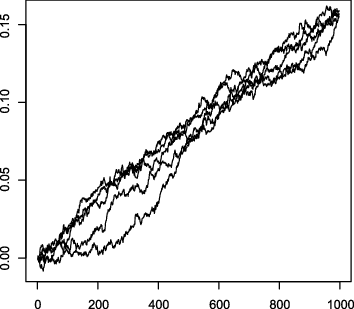}

\caption{Trajectories in the normal case for $P_{n}=10^{-8}$.}\label
{table:traj_X_Norm_GD}
\end{figure}

Denote $\mathfrak{N}$ the c.d.f. of a normal variate with parameter
$ (
\mu,\sigma^{2} ) $ and $\mathfrak{N}^{-1}$ its inverse.

%Set $k\leftarrow k_{\delta}$ with Algorithm \ref{algo_calcul_k}
%$t_{0}\leftarrow=m^{-1}(m_{0})$
%}
%Simulate $Y_{1}$ with density (\ref{tilte_pour_x0})\\
%$u_{1,1}\leftarrow u(Y_{1})$\\
%$m_{i}\leftarrow$(\ref{mif})\\
%$t_{i}\leftarrow m^{-1}(m_{i})$\\
%$\alpha\leftarrow$(\ref{alpha pour f(x)})\\
%$\beta\leftarrow$(\ref{beta pour f(x)})\\
%Simulate $Y_{i+1}$ with density $g( y_{i+1}\rrvert\\
%y_{1}^{i})$ using Algorithm \ref{algo_simu_Y}\\
%$u_{1,i+1}\leftarrow u_{1,i}+u(Y_{i+1})$
%}
%}
%

%re15 #&#
\begin{remark}
Simulation of $Y_{1}$ can be performed through the method suggested in
\cite%
{BarbeBroniatowski1999}.
\end{remark}

Figures~\ref{table:traj_X_Norm_MD}, \ref{table:traj_X_Norm_GD}, \ref%
{table:traj_X_Exp_MD} and \ref{table:traj_X_Exp_GD} present a number of
simulations of random walks conditioned on their sum with $n=1000$ when
$%
u(x)=x$. In the Gaussian case, when the approximating scheme is known
to be
optimal up to $k=n-1$, the simulation is performed with $k=999$ and two
cases are considered: the moderate deviation case is assumed to be modeled
when $P(\mathbf{S}_{1,n}> na )=10^{-2}$ (Figure~\ref{table:traj_X_Norm_MD});
that this range of probability is in the ``moderate deviation'' range is a
commonly assessed statement among statisticians. The large deviation case
pertains to $P(\mathbf{S}_{1,n}> na )=10^{-8}$ (Figure~\ref%
{table:traj_X_Norm_GD}). The centered exponential case with $n=1000$
and $%
k=800$ is presented in Figures~\ref{table:traj_X_Exp_MD} and \ref%
{table:traj_X_Exp_GD}, under the same events.

% \centering\includegraphics*[scale=0.4]{Table3a.eps}
%
% \end{minipage}\hfill
% \centering\includegraphics*[scale=0.4]{Table3b.eps}
%
% \end{minipage}

%f7 #&#
%f7 ###
\begin{figure}

\includegraphics{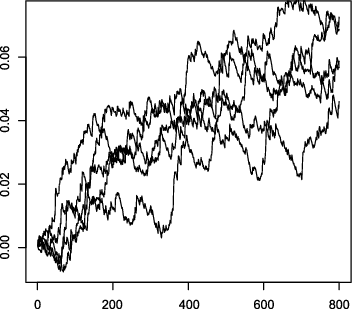}

\caption{Trajectories in the exponential case for $P_{n}=10^{-2}$.}\label
{table:traj_X_Exp_MD}
\end{figure}

%f8 #&#
%f8 ###
\begin{figure}[b]

\includegraphics{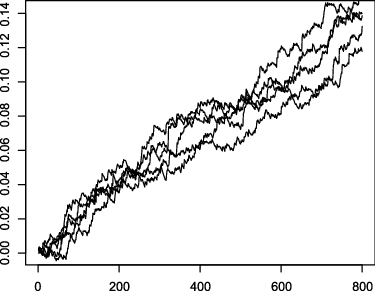}

\caption{Trajectories in the exponential case for $P_{n}=10^{-8}$.}\label
{table:traj_X_Exp_GD}
\end{figure}

In order to check the accuracy of the approximation, Figures \ref%
{table:hist_X_Norm_MD}, \ref{table:hist_X_Norm_GD} (normal case, $n=1000$,
$k=999$) and Figures~\ref{table:hist_X_Exp_MD}, \ref{table:hist_X_Exp_GD}
(centered exponential case, $n=1000$, $k=800$) present the histograms of the
simulated $\mathbf{X}_{i}$'s together with the tilted
densities at
point $a$ which are known to be the limit density of $\mathbf{X}_{1}$
conditioned on $\mathcal{E}_{n}$ in the large deviation case, and to be
equivalent to the same density in the moderate deviation case, as can be
deduced from \cite{ERmakov2006}. The tilted density in the Gaussian
case is
the normal with mean $a$ and variance $1$; in the centered exponential case
the tilted density is an exponential density on $ ( -1,\infty
) $
with parameter $1/(1+a)$.

%f9 #&#
%f9 ###
\begin{figure}

\includegraphics{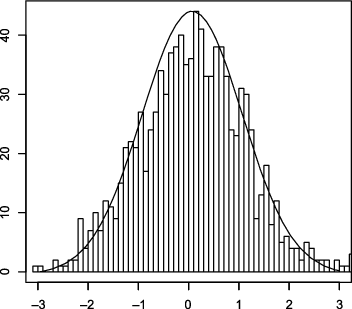}

\caption{Histogram of the $\mathbf{X}_{i}$'s in the normal case
with $n=1000$ and $k=999$ for $P_{n}=10^{-2}$. The curve represents the
associated tilted density.}
\label{table:hist_X_Norm_MD}
\end{figure}

%f10 #&#
%f10 ###
\begin{figure}[b]

\includegraphics{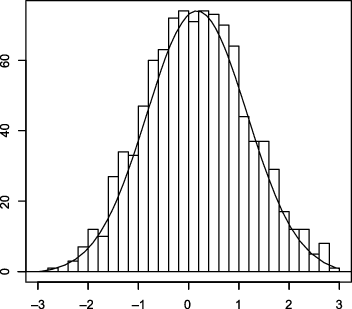}

\caption{Histogram of the $\mathbf{X}_{i}$'s in the normal case
with $n=1000$ and $k=999$ for $P_{n}=10^{-8}$. The curve represents the
associated tilted density.}
\label{table:hist_X_Norm_GD}
\end{figure}

%f11 #&#
%f11 ###
\begin{figure}

\includegraphics{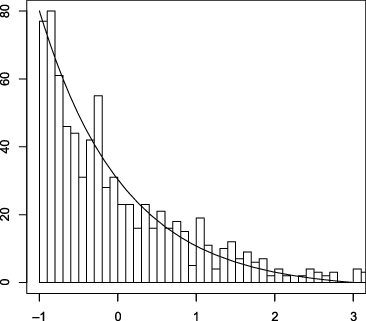}

\caption{Histogram of the $\mathbf{X}_{i}$'s in the exponential
case with $n=1000$ and $k=800$ for $P_{n}=10^{-2}$. The curve
represents the
associated tilted density.}
\label{table:hist_X_Exp_MD}
\end{figure}

%f12 #&#
%f12 ###
\begin{figure}[b]

\includegraphics{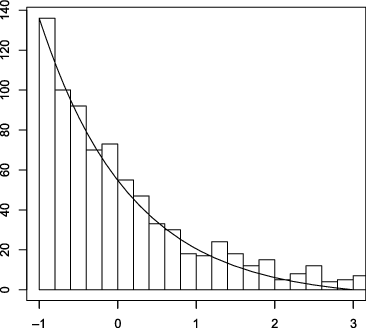}

\caption{Histogram of the $\mathbf{X}_{i}$'s in the exponential
case with $n=1000$ and $k=800$ for $P_{n}=10^{-8}$. The curve
represents the
associated tilted density.}
\label{table:hist_X_Exp_GD}
\end{figure}

Consider now the case when $u(x)=x^{2}$. Figure~\ref{table:u_carr}
presents the case when $\mathbf{X}$ is $N(0,1)$, $n=1000,k=800$, $P (
\mathbf{U}_{1,n}=u_{1,n} ) \simeq10^{-2}$. We present the histograms
of the $X_{i}$'s together with the graph of the corresponding tilted
density; when $\mathbf{X}$ is $N(0,1)$, then $\mathbf{X}^{2}$ is $\chi^{2}$.
It is well known that when $u_{1,n}/n$ is fixed to be larger than~$1$,
then the
limit distribution of $\mathbf{X}_{1}$ conditioned on $ ( \mathbf{U}
_{1,n}=u_{1,n} ) $ tends to $N ( 0,a ) $ which is the
Kullback--Leibler projection of $N(0,1)$ on the set of all probability
measures $Q$ on $\mathbb{R}$ with $\int x^{2}\,dQ(x)=a:=\lim_{n\rightarrow
\infty}u_{1,n}/n$. This distribution is precisely $g_{0}(
y_{1}\rrvert  y_{0})$ defined above. Also consider (\ref{gif});
the expansion using the definitions (\ref{alpha pour f(x)}) and (\ref
{beta pour
f(x)}) prove that as $n\rightarrow\infty$ the dominating term in $%
g_{i}( y_{i+1}\rrvert  y_{1}^{i})$ is precisely $N (
0,m_{0} ) $, and the terms including $y_{i+1}^{4}$ in the exponential
stemming from $\mathfrak{n} ( \alpha\beta+m_{0},\beta
,u(y_{i+1}) ) $ are of order $O ( 1/ ( n-i )  ) $;
the terms depending on $y_{1}^{i}$ are of smaller order. The fit which is
observed in Figure~\ref{table:u_carr} is in accordance with the above
statement in the LDP range (when $\lim_{n\rightarrow\infty} u_{1,n}/n\neq1$), and with the MDP approximation when $\lim_{n\rightarrow
\infty} u_{1,n}/n=1$ and $\lim\inf_{n\rightarrow\infty} (
u_{1,n}-n ) /\sqrt{n}\neq0$, following \cite{ERmakov2006}.

%f13 #&#
%f13 ###
\begin{figure}

\includegraphics{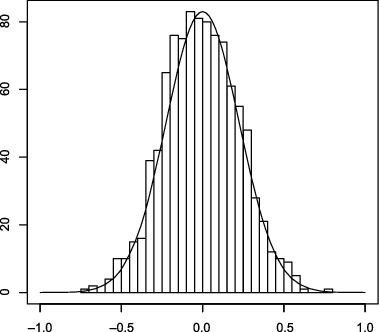}

\caption{Histogram of the $\mathbf{X}_{i}$'s in the normal case
with $n=1000$, $k=800$ and $u(x)=x^{2}$ for $P_{n}=10^{-2}$. The curve
represents the associated tilted density.}
\label{table:u_carr}
\end{figure}

%s4 #&#
%s4 ###
\section{\texorpdfstring{Conditioning on large sets.}
{Conditioning on large sets}}\label{sec4}
\label{sec_IS}

The approximation of the density
\[
p_{A_{n}} \bigl( \mathbf{X}_{1}^{k}=Y_{1}^{k}
\bigr):=p \bigl(  \mathbf{%
X}_{1}^{k}=Y_{1}^{k}
\rrvert \mathbf{U}_{1,n}\in A_{n} \bigr)
\]
of the runs $\mathbf{X}_{1}^{k}$ under large sets $ ( \mathbf{U}%
_{1,n}\in A_{n} ) $ for Borel sets $A_{n}$ with nonempty interior
follows from the above results through integration. Here, in the same vein
as previously, $Y_{1}^{k}$ is generated under $P_{A_{n}}$. An
application of
this result for the evaluation of rare event probabilities through
importance sampling is briefly presented in the next section. The present
section pertains to the large deviation case.

%s4.1 #&#
%s4.1 ###
\subsection{\texorpdfstring{Conditioning on a large set
defined through the density of its
dominating point.}
{Conditioning on a large set defined through the density of its
dominating point}}\label{sec4.1}

We focus on cases when $ ( \mathbf{U}_{1,n}\in A_{n} ) $ can
be expressed as $ ( \mathbf{U}_{1,n}/n\in A ) $ where $A$ is a
fixed Borel set (independent of $n$) with essential infimum $\alpha$
larger than $E\mathbf{U}$ and which can be described as a ``thin'' or
``thick'' Borel set according to its local density at point $\alpha$.

The starting point is the approximation of $p_{nv}$ on $\mathbb{R}^{k}$ for
large values of $k$ under the conditioning point
\[
\mathbf{U}_{1,n}/n=v
\]
when $v$ belongs to $A$. Denote $g_{nv}$ the corresponding approximation
defined in (\ref{gasigmamu}). It holds that
%
%e43 #&#
\begin{equation}
p_{nA}\bigl(x_{1}^{k}\bigr)=\int
_{A}p_{nv} \bigl( \mathbf {X}_{1}^{k}=x_{1}^{k}
\bigr) p( \mathbf{U}_{1,n}/n=v\rrvert \mathbf{U}_{1,n}
\in nA)\,ds. \label{etoile}
\end{equation}
In contrast with the classical importance sampling approach for this problem
we do not consider the dominating point approach, but merely realize a sharp
approximation of the integrand at any point of the domain $A$ and consider
the dominating contribution of all those distributions in the
evaluation of
the conditional density $p_{nA}$. A similar point of view has been
considered in \cite{BarbeBroniatowski2004} for sharp approximations of
Laplace-type integrals in $\mathbb{R}^{d}$.

Turning to (\ref{etoile}) it appears that what is needed is a sharp
approximation for
%
%e44 #&#
\begin{equation}
p( \mathbf{U}_{1,n}/n=v\vert\mathbf{U}_{1,n}\in nA)=
\frac{p(%
\mathbf{U}_{1,n}/n=v){\mathbh{1}}_{A}(v)}{P(\mathbf{U}_{1,n}\in nA)} \label{densiteLocale1}
\end{equation}
with some uniformity for $v$ in $A$. We will assume that $A$ is bounded
above in order to avoid further regularity assumptions on the distribution
of $\mathbf{U}$.

Recall that the \textit{essential infimum }\texttt{essinf}$A=\alpha$
of the
set $A$ with respect to the Lebesgue measure is defined through
\[
\alpha:=\inf \bigl\{ x\dvtx \mbox{ for all }\varepsilon>0,\bigl\llvert [ x,x+
\varepsilon ] \cap A\bigr\rrvert >0 \bigr\}
\]
with $\inf\varnothing:=-\infty$.

We assume that $\alpha>-\infty$, which is tantamount to saying that
we do not
consider very thin sets (e.g., not Cantor-type sets).

The density of the point $\alpha$ in $A$ will not be measured in the
ordinary way, through
\[
d(\alpha):=\lim_{\varepsilon\rightarrow0}\frac{\llvert  A\cap [
\alpha-\varepsilon,\alpha+\varepsilon ] \rrvert
}{\varepsilon},
\]
but through the more appropriate quantity
\[
M(t):=t\int_{A-\alpha}e^{-ty}\,dy,\qquad t>0.
\]
For any set $A$, $0\leq M(t)\leq1$. If there exists an interval $ [
\alpha,\alpha+\varepsilon ] \subset A$, then $\lim_{t\rightarrow
\infty}M(t)=1$. As an example, for a self similar set $A:=A_{p}$
defined as $A_{p}:=\bigcup_{n\in\mathbb{Z}}p^{n}I_{p}$ where $p>2$
and $I_{p}:= [  ( p-1 ) /p,1 ]$, it holds that
$0=\mathrm{essinf} A_{p}$ and $pA_{p}=A_{p}$. Consequently for any
$t\geq0$, $M(tp)=M(t)$ and $M(tp)=M(t)$ for all $t\geq0$; it follows that
\[
\inf_{1\leq u\leq p}M(u)=\lim\inf_{t\rightarrow\infty}M(t)\leq\lim
\sup_{t\rightarrow\infty}M(t)=\sup_{1\leq u\leq p}M(u).
\]

Define
\[
M_{n}(t):=M(nt)/t=\int_{A-\alpha}e^{-ty}\,dy
\]
and
\[
\Psi_{n}(t):=n\log\phi_{\mathbf{U}}(t)+\log M_{n}(t)-n
\alpha t
\]
for all $t>0$ such that $\phi_{\mathbf{U}}(t)$ is finite. We borrow from
\cite{BarbeBro2000} the following results.

Define $\mu_{n}(t):= ( 1/n ) \log M_{n}(t)$ which is for all $
n\geq1$ a decreasing function of $t$ on $ ( 0,\infty ) $, and
which is negative for large $n$. Also $\mu_{n}^{\prime}(t)=\mu
_{1}^{\prime}(nt)$ and $\mu_{1}^{\prime}$ are nondecreasing on
$ (
0,\infty ) $.

Let $\overline{\mu}:=\lim_{t\rightarrow\infty}\mu_{1}^{\prime}(t)$
and $%
\underline{\mu}:=\lim_{t\rightarrow0}\mu_{1}^{\prime}(t)$. Then
according to \cite{BarbeBro2000} the following holds:
%
%le16 #&#
\begin{lemma}
Under the above notation and hypotheses, the equation\break  $\Psi_{n}^{\prime
}(t)=0$ has a unique solution $t_{n}$ in $ ( 0,t_{0} ) $ for $%
\alpha$ in $ ( E\mathbf{U}+\overline{\mu},\infty ) $ where $
t_{0}:=\sup \{ t\dvtx \phi_{\mathbf{U}}(t)<\infty \}$. Furthermore
if $\alpha>E\mathbf{U}+\underline{\mu}$, then there exists a compact
set $K\subset
( 0,t_{0} ) $ such that $t_{n}\in K$ for all $n$.
\end{lemma}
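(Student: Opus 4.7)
The plan is to reduce the equation $\Psi_n'(t)=0$ to a scalar equation of the form $f_n(t)=\alpha$ with $f_n$ continuous and strictly increasing, apply the intermediate value theorem, and then extract a uniform compact range for $t_n$ using the crude bound $\mu_1'(nt_n)\in[\underline{\mu},\overline{\mu}]$.

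First I would differentiate. Using $(\log\phi_{\mathbf{U}})'=m$ together with the identity $\mu_n'(t)=\mu_1'(nt)$ recalled just before the statement, a direct computation gives
$$\Psi_n'(t)=n\bigl[m(t)+\mu_1'(nt)-\alpha\bigr],$$
so the equation becomes $f_n(t):=m(t)+\mu_1'(nt)=\alpha$ on $(0,t_0)$. On this interval $m'(t)=s^2(t)>0$, and by hypothesis $\mu_1'$ is non-decreasing, hence so is $t\mapsto\mu_1'(nt)$. Therefore $f_n$ is continuous and strictly increasing. As $t\to 0^+$ we obtain $f_n(t)\to E\mathbf{U}+\underline{\mu}$, and under the steepness convention implicit in the paper (so that $m(t_0^-)=\infty$) we get $f_n(t)\to+\infty$ as $t\to t_0^-$. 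By the intermediate value theorem, for each $\alpha>E\mathbf{U}+\underline{\mu}$, and in particular for each $\alpha>E\mathbf{U}+\overline{\mu}$, there is a unique $t_n\in(0,t_0)$ solving $f_n(t_n)=\alpha$, which is the first assertion.

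For the compactness statement, rewrite $f_n(t_n)=\alpha$ as $m(t_n)=\alpha-\mu_1'(nt_n)$ and use $\mu_1'(nt_n)\in[\underline{\mu},\overline{\mu}]$ to obtain
$$\alpha-\overline{\mu}\le m(t_n)\le\alpha-\underline{\mu}.$$
Since $m$ is a strictly increasing homeomorphism of $(0,t_0)$ onto $(E\mathbf{U},m(t_0^-))$, applying $m^{-1}$ yields $t_n\in K:=[m^{-1}(\alpha-\overline{\mu}),\,m^{-1}(\alpha-\underline{\mu})]$, a set independent of $n$. The lower endpoint is strictly positive precisely when $\alpha-\overline{\mu}>E\mathbf{U}$, i.e.\ when $\alpha>E\mathbf{U}+\overline{\mu}$; the upper endpoint stays strictly below $t_0$ thanks to steepness. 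So $K$ is a compact subset of $(0,t_0)$ that contains every $t_n$.

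The only delicate point is the behavior of $f_n$ at the right endpoint $t_0$: when $t_0<\infty$ one needs $\phi_{\mathbf{U}}$ to be steep at $t_0$ (i.e.\ $m(t_0^-)=\infty$) so that the range of $f_n$ exhausts $(E\mathbf{U}+\underline{\mu},\infty)$ and so that $m^{-1}(\alpha-\underline{\mu})<t_0$. This is exactly the standard steepness framework used elsewhere in the paper and requires no additional hypothesis beyond the Cramer condition already in force.
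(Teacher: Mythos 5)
The paper does not actually prove this lemma: it is imported from Barbe and Broniatowski (2000) (``We borrow from \cite{BarbeBro2000} the following results''), so there is no in-paper argument to compare yours against. Judged on its own, your proof is the natural one and is essentially sound. The reduction of $\Psi_n'(t)=0$ to $m(t)+\mu_1'(nt)=\alpha$ is correct, strict monotonicity of $m$ together with monotonicity of $\mu_1'$ gives uniqueness, and the sandwich $\alpha-\overline{\mu}\le m(t_n)\le\alpha-\underline{\mu}$ is the right device for the uniform localization. Two small remarks: the upper endpoint of your $K$ uses $\underline{\mu}>-\infty$, which holds here only because the paper assumes $A$ bounded above; if one wants to avoid that, replace the upper bound by $m(t_n)\le\alpha-\mu_1'(t_*)$ with $t_*:=m^{-1}(\alpha-\overline{\mu})$, using $nt_n\ge t_*$ and monotonicity of $\mu_1'$. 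Your appeal to steepness at $t_0$ is legitimate and correctly identified as the only boundary issue.

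The one point you should not gloss over is that your proof establishes the two conclusions under hypotheses that are transposed relative to the lemma's literal wording. Since $\mu_1'$ is non-decreasing, $\underline{\mu}\le\overline{\mu}$. Your intermediate value argument gives existence and uniqueness on the larger range $\alpha>E\mathbf{U}+\underline{\mu}$ (which of course covers the stated range $\alpha>E\mathbf{U}+\overline{\mu}$), but your compact containment genuinely requires $\alpha>E\mathbf{U}+\overline{\mu}$, not merely $\alpha>E\mathbf{U}+\underline{\mu}$ as the lemma asserts. This is not a defect of your argument: for $E\mathbf{U}+\underline{\mu}<\alpha\le E\mathbf{U}+\overline{\mu}$ one has $f_n(t)=m(t)+\mu_1'(nt)\to m(t)+\overline{\mu}$ for each fixed $t>0$, so the roots satisfy $t_n\to 0$ and cannot remain in a fixed compact subset of $\left(0,t_0\right)$. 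The lemma as transcribed appears to have $\underline{\mu}$ and $\overline{\mu}$ interchanged between its two clauses; your version is the internally consistent one, but you should state this explicitly rather than silently proving a statement different from the one posed.
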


Assume that $\alpha>E\mathbf{U}+\underline{\mu}$. Define $\psi
_{n}(t):=\Psi_{n}^{\prime\prime}(t)$, and suppose that for any
$\lambda>0$,
%
%e45 #&#
\begin{equation}
\lim_{n\rightarrow\infty}\sup_{\llvert  u\rrvert <\lambda}\frac
{%
\psi_{n} ( t_{n}+{u}/{\sqrt{\psi_{n}(t_{n})}} ) }{\psi
_{n} ( t_{n} ) }=1,
\label{selfneglecting}
\end{equation}
where $t_{n}$ is a solution of $\Psi_{n}^{\prime}(t)=0$ in the range
$ (
0,t_{0} ) $. It can be proved that (\ref{selfneglecting}) holds, for
example, when $t\rightarrow\log M(t)/t$ is a regularly varying
function at
infinity with index $\rho\in ( 0,1 )$, that is, $\log
M(t)/t\in
\mathcal{R}_{\rho} ( \infty ) $; see \cite{BarbeBro2000}, Lemma~2.2.

We also assume that
%
%e46 #&#
\begin{equation}
\lim\sup_{t\rightarrow\infty}t \bigl( \log M(t) \bigr) <\infty,
\label{condRro}
\end{equation}
which holds, for example, when $\log ( M(t)/t ) \in
\mathcal{R}%
_{\rho} ( \infty )$, for $0\leq\rho<1$.

Theorem 2.1 in \cite{BarbeBro2000} provides a general result to be inserted
in (\ref{densiteLocale1}); we take the occasion to correct a misprint in
this result.

%th17 #&#
\begin{theorem}
Assume (\ref{selfneglecting}) and (\ref{condRro}) together with the
aforementioned conditions on the r.v. $\mathbf{U}$. Then for $\alpha>E%
\mathbf{U}+\underline{\mu}$,
%
%e47 #&#
\begin{equation}
P(\mathbf{U}_{1,n}\in nA)=\frac{\phi_{\mathbf{U}%
}^{n}(t_{n})M_{n}(t_{n})e^{-nt_{n}\alpha}}{\sqrt{\psi_{n} (
t_{n} ) }\sqrt{2\pi}} \bigl( 1+o(1) \bigr)\qquad
\mbox{as }n\rightarrow \infty, \label{LDP}
\end{equation}
with $t_{n}$ satisfying $\Psi_{n}^{\prime}(t)=0$ provided that the
function $x\rightarrow P(\mathbf{U}_{1,n}\in nA+x)$ is nonincreasing
for $n$
large enough. In particular, this last condition holds if
\begin{longlist}[(iii)]
\item[(i)] (Petrov): $A= ( \alpha,\infty ) $ or $A=[\alpha,\infty
)$; in
this case $M_{n}(t)=1/t$; note that in this case the classical result is
slightly different, since
\[
P(\mathbf{U}_{1,n}> na )=\frac{\phi_{\mathbf{U}}^{n}(t^{a})e^{-nt^{a}a}}{
t^{a}s(t^{a})\sqrt{2\pi}} \bigl( 1+o(1) \bigr) \qquad\mbox{as }n\rightarrow \infty
\]
with $m(t^{a})=a$ and $a>E\mathbf{U}$; this is readily seen to be equivalent
to (\ref{LDP}) when $A= ( a,\infty ) $.

\item[(ii)] $\mathbf{U}$ has a symmetric unimodal distribution.

\item[(iii)] $\mathbf{U}$ has a strongly unimodal distribution.
\end{longlist}
\end{theorem}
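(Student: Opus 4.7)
The plan is to combine exponential tilting with a uniform local limit theorem and a Laplace-type integration over the shifted set $A-\alpha$, with $t_n$ emerging naturally as the saddle point. I would start from the tilting identity
$$P(\mathbf{U}_{1,n}\in nA)=\phi_{\mathbf{U}}^n(t)\int_{nA} e^{-tu}\,\pi_n^{m(t)}(u)\,du,$$
valid for any admissible $t>0$, where $\pi_n^{m(t)}$ is the density of $\mathbf{U}_{1,n}$ under the $t$-tilt $\pi_{\mathbf{U}}^{m(t)}$. The Cram\'er condition on $\mathbf{U}$ together with the $L^r$ hypothesis on its characteristic function provide a uniform local limit theorem in the spirit of Remark~\ref{Remark Edgeworth array}: $\pi_n^{m(t)}(u)=(s(t)\sqrt{2\pi n})^{-1}\exp(-(u-nm(t))^2/(2ns^2(t)))(1+o(1))$, uniformly for $u$ in macroscopic neighborhoods of $nm(t)$. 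Substituting $u=n(\alpha+y)$ with $y\in A-\alpha$ gives
$$P(\mathbf{U}_{1,n}\in nA)=\frac{\sqrt n\,\phi_{\mathbf{U}}^n(t)\,e^{-n\alpha t}}{s(t)\sqrt{2\pi}}\int_{A-\alpha} e^{-nty}\exp\!\left(-\frac{n(y-(m(t)-\alpha))^2}{2s^2(t)}\right)dy\;(1+o(1)).$$

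The choice of saddle point $t_n$ is now natural: multiplying the $t$-dependent prefactor by the ``zeroth-order'' integral $\int_{A-\alpha}e^{-nty}\,dy = M_n(t)$ reconstructs $s(t)^{-1}\sqrt{n/(2\pi)}\,\exp(\Psi_n(t))$, whose stationary point is $t_n$. The equation $\Psi_n'(t_n)=0$ forces the mismatch $a_n:=m(t_n)-\alpha$ to equal the mean of $y$ under the probability density $e^{-nt_n y}/M_n(t_n)$ on $A-\alpha$, which is small. With $t=t_n$ fixed, completing the square in the exponent of the $y$-integral reveals that the effective decay scale combines the cumulant curvature $ns^2(t_n)$ with the geometric curvature $(\log M_n)''(t_n)$ into $\psi_n(t_n)=\Psi_n''(t_n)$; a Gaussian/Laplace expansion then yields the key identity
$$\frac{\sqrt n}{s(t_n)\sqrt{2\pi}}\int_{A-\alpha} e^{-nt_n y}\exp\!\left(-\frac{n(y-a_n)^2}{2s^2(t_n)}\right)dy=\frac{M_n(t_n)}{\sqrt{2\pi\,\psi_n(t_n)}}\,(1+o(1)),$$
which combined with the factor $\phi_{\mathbf{U}}^n(t_n)e^{-n\alpha t_n}$ produces (\ref{LDP}).

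The hard part will be the justification of this Laplace identity in full generality: the Gaussian from the local CLT has width $s(t_n)/\sqrt n$, while the exponential weight $e^{-nt_n y}$ on the possibly irregular set $A-\alpha$ contributes its own geometric width, and the two must combine into a single effective Gaussian of variance $1/\psi_n(t_n)$. Hypothesis (\ref{selfneglecting}) is tailored exactly for this step: it ensures that $\psi_n$ is essentially constant on the scale $1/\sqrt{\psi_n(t_n)}$ around $t_n$, legitimizing the second-order Taylor expansion of the exponent on that window; the bound (\ref{condRro}) prevents the curvature from blowing up. Finally, the nonincreasing-translate hypothesis on $x\mapsto P(\mathbf{U}_{1,n}\in nA+x)$ is what rules out non-negligible contributions from parts of $A$ far from $\alpha$. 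In case (i) it is immediate since $nA+x$ is a half-line; in case (ii) Wintner's theorem implies that $\mathbf{U}_{1,n}$ is symmetric unimodal, so the translate $nA+x$ sweeps through the decreasing tail of its density and the monotonicity follows; in case (iii) Pr\'ekopa's theorem gives that $\mathbf{U}_{1,n}$ has a log-concave (hence unimodal) density, and the same argument applies.
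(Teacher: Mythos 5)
The paper does not actually prove this statement: it is imported verbatim (with a misprint corrected) from Theorem 2.1 of \cite{BarbeBro2000}, so there is no internal proof to measure your argument against. That said, your sketch is in substance the saddle-point argument behind the cited result: tilt, substitute $u=n(\alpha+y)$, and read $P(\mathbf{U}_{1,n}\in nA)$ as a Laplace integral in which the cumulant curvature $ns^{2}(t)$ and the geometric curvature of $\log M$ add up to $\psi_{n}(t_{n})$, with $t_{n}$ chosen so that the tilted mean matches the barycenter of the weight $e^{-nt_{n}y}$ on $A-\alpha$. Your identification of the roles of (\ref{selfneglecting}) (Gaussian behaviour of the set-generated exponential family at scale $\psi_{n}(t_{n})^{-1/2}$, legitimizing the second-order expansion of the exponent) and of (\ref{condRro}) (keeping the width of $e^{-nt_{n}y}$ on $A-\alpha$ of order $n^{-1/2}$, so the two curvatures really are commensurate) is correct, and your treatment of cases (ii) and (iii) via Wintner and Pr\'{e}kopa is fine.

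Three soft spots deserve attention. First, the local limit theorem does not hold with a multiplicative $(1+o(1))$ error uniformly over macroscopic neighborhoods of $nm(t)$; what is available (Remark \ref{Remark Edgeworth array}) is an additive error $o(n^{-1/2})$ for the normalized density. The argument still closes, but for a different reason: the additive error, integrated against the same weight $n\,e^{-nt_{n}y}\mathbf{1}_{A-\alpha}(y)$ as the main term, contributes $o(\sqrt{n})\,\phi_{\mathbf{U}}^{n}(t_{n})e^{-nt_{n}\alpha}\int_{A-\alpha}e^{-nt_{n}y}dy$, which is $o(1)$ relative to the main term of order $\sqrt{n}\,\phi_{\mathbf{U}}^{n}(t_{n})e^{-nt_{n}\alpha}\int_{A-\alpha}e^{-nt_{n}y}dy$; this integration step should be made explicit rather than resting on a relative-error LLT that fails away from the center. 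Second, the nonincreasing-translate hypothesis is not needed to suppress the contribution of $A$ far from $\alpha$: that part dies under a plain Chernoff bound, since $P(\mathbf{U}_{1,n}>n(\alpha+\varepsilon))$ is exponentially smaller than the main term, whose subexponential factor $M_{n}(t_{n})/\sqrt{\psi_{n}(t_{n})}$ is bounded below by $c/\sqrt{n}$. Its actual role in \cite{BarbeBro2000} is the de-smoothing step: one establishes the asymptotics for a mollified quantity $\int h_{\varepsilon}(x)P(\mathbf{U}_{1,n}\in nA+x)\,dx$ and recovers the value at $x=0$ by a monotone sandwich. Third, track the normalization of $M_{n}$: the paper's displayed definition is internally inconsistent, and the convention under which (\ref{LDP}) reduces to the Petrov formula in case (i) is $M_{n}(t)=M(nt)/t=n\int_{A-\alpha}e^{-nty}\,dy$; with your identification $M_{n}(t)=\int_{A-\alpha}e^{-nty}\,dy$ the right-hand side of your key identity is off by a factor of $n$.
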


The shape of $A$ near $\alpha$ is reflected in the behavior of the function
$M(t)$ for large values of $t$. As such, the larger the $n$, the more relevant
is the shape of $A$ near $\alpha$.

Note further that $M_{n}(t)e^{-nt\alpha}=\int_{A}e^{-nty}\,dy$ from
which we
see that $\alpha$ plays no role in (\ref{LDP}). Hence $\alpha$ can be
replaced by any number $\gamma$ such that $\int_{A-\gamma}e^{-ty}\,dy$
converges. Further $t_{n}$ is independent of $\alpha$. The so-called
dominating point $\alpha$ of $A$ can therefore be defined as
\[
\alpha:=\lim_{t\rightarrow\infty}\log\int_{A}e^{-ty}\,dy.
\]
In order to examine further the role played in (\ref{LDP}) by the regularity
of $A$ near its essential infimum $\alpha$, introduce the pointwise H\"
{o}%
lder dimension of $A$ at $\alpha$ as
\[
\delta(\alpha):=\frac{\log G(\varepsilon)}{-\log\varepsilon},
\]
where
\[
G(\varepsilon):=\bigl\llvert A\cap [ \alpha,\alpha+\varepsilon ]
\bigr\rrvert\qquad
\mbox{for positive }\varepsilon.
\]
We refer to Proposition 2.1 in \cite{BarbeBro2000} for a set of
Abel--Tauber-type results which link the properties of $M(t)$ at
infinity with those of $%
G $ at $0$. For example, it follows that $G(\varepsilon)\sim
\varepsilon
^{\delta(\alpha)}$ (as $\varepsilon\rightarrow0$) if and only if
$M(t)\sim
ct^{-\delta(\alpha)+1}\Gamma ( 1+\delta(\alpha) ) $ (as $%
t\rightarrow\infty$). Consequently if $M_{n}(t)\rightarrow1$ as $%
t\rightarrow\infty$, then $M(t)\sim t$ as $t\rightarrow\infty$ and $
G(\varepsilon)\sim\varepsilon$ as $\varepsilon\rightarrow0$.

Asymptotic formulas for the numerator in (\ref{densiteLocale1}) are well
known and have a long history, going back to \cite{Richter1957}. It
holds that
%
%e48 #&#
\begin{equation}
p(\mathbf{U}_{1,n}/n=v)=\frac{\sqrt{n}e^{nvt^{v}}\phi_{\mathbf
{U}}(t^{v})}{%
\sqrt{2\pi}s(t^{v})} \bigl( 1+o(1) \bigr)\qquad \mbox{as }n\rightarrow \infty \label{numLDP}
\end{equation}
with $t^{v}$ defined as $m(t^{v})=v$.

Plugging in (\ref{numLDP}) and (\ref{LDP}) in (\ref{etoile}) provides an
expression for the density of the runs. For applications the only relevant
case is developed in the following paragraph.

%s4.2 #&#
%s4.2 ###
\subsection{\texorpdfstring{Conditioning on a thick set.}
{Conditioning on a thick set}}\label{sec4.2}

In the case when $A= ( a,\infty ) $ or with $a>Eu (
\mathbf{X}%
) $ or, more generally, when $A$ is a thick set in a neighborhood
of its
essential infimum [i.e., when $\lim_{t\rightarrow\infty}M(t)=1$] a simple
asymptotic evaluation for (\ref{densiteLocale1}) when $A$ is unbounded can
be obtained. Indeed an expansion of the ratio yields
%
%e49 #&#
\begin{equation}
p( \mathbf{U}_{1,n}/n=v\rrvert \mathbf{U}_{1,n}> na )=
\bigl( nt\exp\bigl(-nt(v-a)\bigr) \bigr) {\mathbh{1}}_{A}(v)
\bigl(1+o(1)\bigr) \label{Approx_Exp}
\end{equation}
with $m(t)=a$, indicating that $\mathbf{U}_{1,n}/n$ is roughly exponentially
distributed on $A$ with expectation $a+1/nt$. This result is used in Section~\ref{sec5} in order to derive estimators of some rare event probabilities through
importance sampling.

In order to obtain a sharp approximation for $p_{nA} ( \mathbf{X}%
_{1}^{k}=Y_{1}^{k} ) $ it is necessary to introduce an interval
$ (
a,a+c_{n} ) $ which contains the principal part of the integral~(\ref%
{etoile}).

Let $c_{n}$ denote a positive sequence such that the following
condition (C)
holds:
\begin{eqnarray*}
\lim_{n\rightarrow\infty}nc_{n} &=&\infty,
\\
\sup_{n\geq1}\frac{nc_{n}}{(n-k)} &<&\infty
\end{eqnarray*}
and denote $c$ the current term $c_{n}$.

Define on $\mathbb{R}^{k}$ the density
%
%e50 #&#
\begin{eqnarray}\label{h1_k}
&& g_{nA}\bigl(y_{1}^{k}\bigr)
\nonumber
\\[-8pt]
\\[-8pt]
\nonumber
&&\qquad:=\frac{nm^{-1} ( a ) \int_{a}^{a+c}g_{nv}(y_{1}^{k}) (
\exp(
-nm^{-1} ( a ) (v-a)) ) \,dv}{1-\exp(-nm^{-1} ( a
) c)}.
\end{eqnarray}
The density
%
%e51 #&#
\begin{equation}
\frac{nm^{-1} ( a )  ( \exp(-nm^{-1} ( a )
(v-a)) ) {\mathbh{1}}_{ ( a,a+c ) }(v)}{1-\exp
(-nm^{-1} (a ) c)}, \label{condS}
\end{equation}
which appears in (\ref{h1_k}) approximates $p( \mathbf{U}%
_{1,n}/n=v\rrvert  a<\mathbf{U}_{1,n}/n<a+c)$. Furthermore due to
Theorem %
\ref{ThmApproxsousf(x)} $g_{nv}(Y_{1}^{k})$ approximates $p_{nv}(Y_{1}^{k})$
when $Y_{1}^{k}$ results from sampling under $P_{nA}$. For a discussion on
the maximal value of $k$ for which a given relative accuracy is attained,
see \cite{BroniatowskiCaron2011IS}.

The \textit{variance function} $V$ of the distribution of $\mathbf{U}$
is defined on the span of $\mathbf{U}$ through
\[
v\rightarrow V(v):=s^{2}\bigl(m^{-1}(v)\bigr).
\]
Denote (V) the condition
\[
\sup_{n\geq1}\sqrt{n}\int_{a}^{\infty}V^{\prime}(v)
\bigl( \exp \bigl(-nm^{-1}(a) ( v-a )\bigr) \bigr) \,dv<\infty.
\]

%th18 #&#
\begin{theorem}
\label{Thm_approx_largeSets} Assume \textup{(\ref{E1}), (\ref{E2}), (C), (V)}.
Then for any positive $\delta<1$:

\textup{(i)}
%
%e52 #&#
\begin{equation}
p_{nA} \bigl( \mathbf{X}_{1}^{k}=Y_{1}^{k}
\bigr) =g_{nA}\bigl(Y_{1}^{k}\bigr)
\bigl(1+o_{P_{nA}}(\delta_{n})\bigr) \label{Thm_approx_largeSets(i)}
\end{equation}
and \textup{(ii)}
%
%e53 #&#
\begin{equation}
p_{nA} \bigl( \mathbf{X}_{1}^{k}=Y_{1}^{k}
\bigr) =g_{nA}\bigl(Y_{1}^{k}\bigr)
\bigl(1+o_{G_{nA}}(\delta_{n})\bigr), \label{Thm_approx_largeSets(ii)}
\end{equation}
where
%
%e54 #&#
\begin{equation}
\delta_{n}:=\max \bigl( \varepsilon_{n} ( \log n )
^{2}, \bigl( \exp(-nc) \bigr) ^{\delta} \bigr). \label{vitesse}
\end{equation}
\end{theorem}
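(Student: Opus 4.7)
The plan is to start from the Bayes identity \eqref{etoile} and treat the weight $w_n(v):=p(\mathbf{U}_{1,n}/n=v\mid \mathbf{U}_{1,n}\in nA)$ and the inner factor $p_{nv}(Y_1^k)$ separately on a slab about the essential infimum $a$. Split
\begin{equation*}
p_{nA}(Y_1^k)=\int_{a}^{a+c} p_{nv}(Y_1^k)\,w_n(v)\,dv\;+\;\int_{A\cap[a+c,\infty)} p_{nv}(Y_1^k)\,w_n(v)\,dv.
\end{equation*}
On the slab $(a,a+c)$, use \eqref{Approx_Exp} to replace $w_n(v)$ by the truncated exponential density appearing in the definition \eqref{h1_k}, \eqref{condS}; the normalisation $1-\exp(-nm^{-1}(a)c)$ arises because $w_n$ is a probability density on the whole set $A$ whereas the truncated exponential integrates to $1$ on $(a,a+c)$, the discrepancy being of exponential order. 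Inside the slab, substitute $g_{nv}(Y_1^k)$ for $p_{nv}(Y_1^k)$ through Theorem \ref{ThmApproxsousf(x)}(i).

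The central technical point is that the substitution $p_{nv}\mapsto g_{nv}$ must hold with the same $o_{P_{nv}}(\epsilon_n(\log n)^2)$ rate uniformly in $v$ ranging over $(a,a+c)$. Tracing the $v$-dependence through the Edgeworth expansions that underlie the proof of Theorem \ref{ThmApproxsousf(x)}, the tilting parameter $t_i^v$ and the cumulants $s_i^2(t_i^v)$, $\mu_3^i(t_i^v)$ vary by $O(v-a)=O(c)$ on the slab; condition (C) keeps $nc/(n-k)$ bounded so the Edgeworth remainders do not degrade, and condition (V) precisely ensures that the variation of the variance function $V$, when integrated against the exponential weight of effective mass $1/(nm^{-1}(a))$, contributes at most an $o(1)$ factor. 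This delivers
\begin{equation*}
\int_{a}^{a+c} p_{nv}(Y_1^k)\,w_n(v)\,dv \;=\; g_{nA}(Y_1^k)\bigl(1+o_{P_{nA}}(\epsilon_n(\log n)^2)\bigr)(1-\exp(-nm^{-1}(a)c))^{-1}\cdot(1-\exp(-nm^{-1}(a)c)).
\end{equation*}
For the second piece, the exponential weight $w_n(v)$ has total mass on $[a+c,\infty)\cap A$ of order $\exp(-nm^{-1}(a)c)=(\exp-nc)^{m^{-1}(a)}$; since $m^{-1}(a)>0$ is a fixed positive constant, for any prescribed $\delta\in(0,1)$ this tail is $O((\exp-nc)^\delta)$, bounded after factoring out $g_{nA}(Y_1^k)$ using $\sup_{v\ge a+c}p_{nv}(Y_1^k)/g_{nA}(Y_1^k)=O(1)$ on typical trajectories.

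Combining the slab approximation with the tail bound yields
\begin{equation*}
p_{nA}(Y_1^k)=g_{nA}(Y_1^k)\bigl(1+o_{P_{nA}}(\delta_n)\bigr),
\end{equation*}
with $\delta_n$ as in \eqref{vitesse}, proving (i). Statement (ii) then follows by a direct application of Lemma \ref{Lemma:commute_from_p_n_to_g_n} to the pair $(P_{nA},G_{nA})$. I expect the main obstacle to be the uniform-in-$v$ version of Theorem \ref{ThmApproxsousf(x)} on the slab: one must check that the $o_{P_{nv}}$ remainders in the proof of that theorem can be integrated against the exponential weight without loss, and it is precisely for this integration that conditions (C) and (V) have been imposed. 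Everything else is routine bookkeeping on the Bayes decomposition \eqref{etoile}.
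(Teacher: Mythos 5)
Your proposal follows the same route as the paper: the decomposition \eqref{etoile}, reduction to the slab $(a,a+c)$, substitution of $g_{nv}$ for $p_{nv}$ uniformly in $v$, insertion of the exponential approximation \eqref{Approx_Exp}, and Lemma \ref{Lemma:commute_from_p_n_to_g_n} for part (ii). You have also correctly located the main technical burden in the uniformity of the substitution over the slab. Two steps, however, are asserted where the paper supplies the actual mechanism, and one of them as written would not go through.

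First, the tail. Your bound rests on the claim $\sup_{v\ge a+c}p_{nv}(Y_1^k)/g_{nA}(Y_1^k)=O(1)$ on typical trajectories, which is neither justified nor needed: for $v$ away from $a$ the ratio $p_{nv}(Y_1^k)/p_{na}(Y_1^k)$ involves a competition of exponentials in $k$ and is not obviously bounded. The paper avoids this entirely by writing $p_{nA}(Y_1^k)$ via Bayes as $\frac{n\,p_{\mathbf{X}}(Y_1^k)}{n-k}$ times an integral depending on $Y_1^k$ only through $\overline{U_{1,k}}$, so that the tail contribution relative to the whole is exactly a ratio $I$ of two tail probabilities of $\mathbf{U}_{k+1,n}/(n-k)$; this ratio is then controlled by Lemma \ref{Lemma_Jensen} together with convexity of $I_{\mathbf{U}}$ and condition (C), giving $I=O_{P_{nA}}(\exp(-nc))$. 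Second, the uniformity. What is needed is not that the $o_{P_{nv}}$ remainders of Theorem \ref{ThmApproxsousf(x)} can be ``integrated against the exponential weight'', but that the expansion $p_{nv}(Y_1^k)=g_{nv}(Y_1^k)(1+o(\epsilon_n(\log n)^2))$ holds uniformly in $v\in(a,a+c)$ when $Y_1^k$ is sampled under $P_{nA}$ rather than under $P_{nv}$. The paper obtains this by observing that the proof of Theorem \ref{ThmApproxsousf(x)} uses only the maximal inequality of Lemma \ref{LemmaMaxm_in} and the $O(\log n)$ bound of Lemma \ref{Lemma_max_X_i_under_conditioning}, and by re-establishing both under $P_{nA}$ (Lemmas \ref{LemmaMomentsCond} and \ref{Lemma_max_U_i_under_E_n}); this is precisely where condition (V) enters, through the second-moment computation under $P_{nA}$. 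With these two repairs your argument coincides with the paper's.
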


\begin{pf}
See the \hyperref[app]{Appendix}.
\end{pf}

\begin{remark*}Most distributions used in statistics satisfy (V); numerous
papers have focused on the properties of variance functions and
classification of distributions; see, for example, \cite{LetacMora} and
references
therein.
\end{remark*}

%co19 #&#
\begin{corollary}
Under the hypotheses of Theorem \ref{Thm_approx_largeSets} the total
variation distance between $P_{nA}$ and $G_{nA}$ goes to $0$ as $n$
tends to
infinity, that is,
\[
\lim_{n\rightarrow\infty}\int\bigl\llvert p_{nA} \bigl(
y_{1}^{k} \bigr) -g_{nA} \bigl(
y_{1}^{k} \bigr) \bigr\rrvert \,dy_{1}^{k}=0.
\]
\end{corollary}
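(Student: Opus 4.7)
The plan is to mimic verbatim the argument given just after Theorem \ref{ThmApproxsousf(x)} which, together with Scheff\'e's Lemma, deduced vanishing total variation from a pointwise relative-error statement on a large set. The starting observation is that Theorem \ref{Thm_approx_largeSets} provides exactly the two ingredients needed: part (i) gives pointwise closeness of $p_{nA}$ and $g_{nA}$ on $P_{nA}$-typical trajectories, while part (ii) gives the same closeness on $G_{nA}$-typical trajectories, both with the same rate $\delta_n$ from \eqref{vitesse}.

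Concretely, first I would fix any sequence $\eta_n\to 0$ with $\delta_n/\eta_n\to 0$ and introduce
\begin{equation*}
E_{k,\eta_n}:=\left\{y_1^{k}\in\mathbb{R}^{k}:\left|\frac{p_{nA}\left(y_1^{k}\right)-g_{nA}\left(y_1^{k}\right)}{g_{nA}\left(y_1^{k}\right)}\right|<\eta_n\right\}.
\end{equation*}
By Theorem \ref{Thm_approx_largeSets}(i) applied to a $P_{nA}$-sample and Theorem \ref{Thm_approx_largeSets}(ii) applied to a $G_{nA}$-sample, the relative error is $o_{P_{nA}}(\delta_n)$ and $o_{G_{nA}}(\delta_n)$ respectively, hence
\begin{equation*}
\lim_{n\to\infty}P_{nA}\left(E_{k,\eta_n}\right)=\lim_{n\to\infty}G_{nA}\left(E_{k,\eta_n}\right)=1.
\end{equation*}

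Next I would run the same three-line computation as in the passage following Theorem \ref{ThmApproxsousf(x)}: for any Borel set $C\subset\mathbb{R}^{k}$,
\begin{equation*}
\left|P_{nA}\left(C\cap E_{k,\eta_n}\right)-G_{nA}\left(C\cap E_{k,\eta_n}\right)\right|\leq\int_{C\cap E_{k,\eta_n}}\left|p_{nA}-g_{nA}\right|dy_1^{k}\leq\eta_n G_{nA}\left(C\cap E_{k,\eta_n}\right)\leq\eta_n,
\end{equation*}
and the fact that both $P_{nA}(E_{k,\eta_n}^{c})$ and $G_{nA}(E_{k,\eta_n}^{c})$ tend to $0$ allows the restriction to $E_{k,\eta_n}$ to be removed at the price of an additional $o(1)$. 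Taking the supremum over $C$ in $\mathcal{B}(\mathbb{R}^{k})$ yields that the total variation distance between $P_{nA}$ and $G_{nA}$ is bounded by $\eta_n+o(1)\to 0$; applying Scheff\'e's Lemma then converts this into the $L^{1}$ statement on the densities claimed by the corollary.

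There is no real obstacle here: everything is a direct consequence of the pointwise approximation of Theorem \ref{Thm_approx_largeSets} and the fact that this approximation is valid both under the target and under the proxy, so the exceptional sets are simultaneously small for $P_{nA}$ and for $G_{nA}$. The only point requiring a modicum of care is the choice of the auxiliary sequence $\eta_n$, which must dominate $\delta_n$ so as to keep the exceptional set small under both measures, while itself tending to $0$ so that the resulting total variation bound vanishes; any $\eta_n=\sqrt{\delta_n}$ does the job.
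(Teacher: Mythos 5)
Your proof is correct and follows exactly the route the paper intends: the corollary is stated without proof precisely because it is the verbatim transcription, to the pair $(P_{nA},G_{nA})$, of the Scheff\'e-type argument displayed after Theorem \ref{ThmApproxsousf(x)}, using that Theorem \ref{Thm_approx_largeSets} controls the relative error under both the target and the proxy so the exceptional set is negligible for both measures. The only cosmetic difference is your use of a vanishing sequence $\eta_n$ dominating $\delta_n$ in place of the paper's fixed $\delta$ followed by letting $\delta\rightarrow 0$; both are valid.
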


%s5 #&#
%s5 ###
\section{\texorpdfstring{Applications.}{Applications}}\label{sec5}

%s5.1 #&#
%s5.1 ###
\subsection{\texorpdfstring{Rao--Blackwellization of estimators.}
{Rao--Blackwellization of estimators}}\label{sec5.1}

This example illustrates the role of Theorem \ref{ThmApproxsousf(x)} in
statistical inference; the conditioning event is local, in the range
where $\lim_{n\rightarrow\infty}u_{1,n}/n=Eu ( \mathbf{X} ) $.

In statistics the following situation is often encountered. A model
$\mathcal{P}$ consists of a family of densities $p_{\theta}$ where the
parameter $\theta$ is assumed to belong to $\mathbb{R}^{d}$, and a
sample of i.i.d. r.v.'s $\mathbf{X}_{1}^{n}$ is observed, with each of
the $\mathbf{X}_{i}$'s having density $p_{\theta_{T}}$ where $\theta
_{T}$ is unknown; denote $X_{1},\ldots,X_{n}$ the observed data set. Let
$\mathbf{U}_{1,n}:= u ( \mathbf{X}_{1} ) +\cdots+u ( \mathbf{X}_{n} ) $ and let
$u_{1,n}:=u(X_{1})+\cdots+u(X_{n})$, which usually satisfies $\lim_{n\rightarrow\infty}u_{1,n}/n=Eu ( \mathbf{X} ) $. A
preliminary estimator $\widehat{\theta} ( \mathbf{X}_{1}^{n}
) $
is chosen, which may have the advantage of being easily computable, at the
cost of having poor efficiency, approaching $\theta_{T}$ loosely in terms
of the MSE. The famous Rao--Blackwell theorem asserts that the MSE of the
conditional expectation of $\widehat{\theta} ( \mathbf{X}%
_{1}^{n} ) $ given the observed value $u_{1,n}$ of any statistic
improves on the MSE of $\widehat{\theta} ( \mathbf{X}_{1}^{n} )$.
When $u_{1,n}$ is sufficient for $\theta$ the reduction is maximal, leading
to the unbiased minimal variance estimator for $\theta_{T}$ when
$\widehat{%
\theta} ( \mathbf{X}_{1}^{n} ) $ is unbiased
(Lehmann--Scheff\'{e}
theorem).

The conditional density $p_{u_{1,n}} ( x_{1}^{n} ):=p (
\mathbf{X}_{1}^{n}=x_{1}^{n}\rrvert \mathbf{U}_{1,n}=u_{1,n} )
$ is
usually unknown, and Rao--Blackwellization of estimators cannot be performed
in many cases. Simulations of long runs of length $k=k_{n}$ under a
proxy of
$p_{u_{1,n}} ( x_{1}^{k} ) $ provide an easy way to improve the
preliminary estimator, averaging values of $\widehat{\theta} (
(
X_{1}^{k} )(l) ) _{1\leq l\leq L}$ where the samples $ (
X_{1}^{k}(l) )$'s are obtained under the approximation of $%
p_{u_{1,n}} ( x_{1}^{k} ) $ and $L$ runs are performed$.$

Consider the Gamma density
%
%e55 #&#
\begin{equation}
f_{\rho,\theta}(x):=\frac{\theta^{-\rho}}{\Gamma(\rho)}x^{\rho
-1}\exp\biggl( -
\frac{x}{\theta}\biggr) \qquad\mbox{for }x>0. \label{gamma}
\end{equation}
As $\rho$ varies in $\mathbb{R}^{+}$ and $\theta$ is positive, the density
belongs to an exponential family $\gamma_{r,\theta}$ with parameters $
r:=\rho-1$ and $\theta$, and sufficient statistics are $t(x):=\log x$
and $%
u(x):=x$, respectively, for $r$ and $\theta$. Given an i.i.d. sample $%
X_{1}^{n}:= ( X_{1},\ldots,X_{n} ) $ with density $\gamma
_{r_{T},\theta_{T}}$ the resulting sufficient statistics are,
respectively, $%
t_{1,n}:=\log X_{1}+\cdots+\log X_{n}$ and $u_{1,n}:=X_{1}+\cdots+X_{n}$. We
consider the parametic model $ ( \gamma_{r_{T},\theta},\theta
\geq{0}%
) $ assuming $r_{T}$ known.

Definition (\ref{gasigmamu}) shows that $g_{u_{1,n}}$ depends on the unknown
parameter $\theta_{T}$. It can be seen that $u_{1,n}$ is nearly
sufficient for $\theta$ in $g_{u_{1,n}}$\vadjust{\goodbreak} in the sense that the value
of $%
g_{u_{1,n}} ( X_{1}^{k} ) $ does not vary when $\theta_{T}$ is
substituted by any other value $\theta$ of the parameter and the $X_{i}$'s
are generated under any density $\gamma_{r_{T},\theta^{\prime}}$ (see
\cite{BroniatowskiCaron2012Exp}) this is indeed in agreement with the
statement of Theorem \ref{ThmApproxSousAutreTheta}. Hence on one hand $,
u_{1,n}$ can be used to obtain improved estimators of $\theta_{T}$
and on the other hand, $g_{u_{1,n}}$ can be used to simulate
samples distributed under a proxy of $p_{u_{1,n}}$ using any $\theta$ in
lieu of $\theta_{T}$ in~(\ref{gasigmamu}), as is done in the following
procedure:

A first unbiased estimator of $\theta_{T}$ is chosen as
\[
\widehat{\theta}_{2}:=\frac{X_{1}+X_{2}}{2r_{T}}.
\]
Given an i.i.d. sample $X_{1}^{n}$ with density $\gamma_{r_{T},\theta_{T}}$
the Rao--Blackwellized estimator of $\widehat{\theta}$ is defined as
\[
\theta_{RB,2}:=E (  \widehat{\theta}_{2}\rrvert
\mathbf {U}%
_{1,n} )
\]
whose variance is less than $\operatorname{Var} \widehat{\theta}_{2}$.

Consider $k=2$ in $g_{u_{1,n}}(y_{1}^{k})$, and let $ (
Y_{1},Y_{2} ) $ be distributed according to $g_{u_{1,n}}(y_{1}^{2})$.
Replicates of $ ( Y_{1},Y_{2} ) $ induce an estimator of
$\theta
_{RB,2}$ for fixed $u_{1,n}$. Iterating on the simulation of the runs
$X_{1}^{n}$ produces for $n=100$ an i.i.d. sample of $\theta_{RB,2}$'s
from which $\operatorname{Var} \theta_{RB,2}$ is estimated. The resulting variance
shows a net
improvement with respect to the estimated variance of $\widehat{\theta}
_{2}$. It is of some interest to investigate this gain in efficiency
as the
number of terms involved in $\widehat{\theta}_{k}$ increases together
with $%
k$. As $k$ approaches $n$ the variance of $\widehat{\theta}_{k}$ approaches
the Cram\'er--Rao bound. Figure \ref{fig14} shows the decay of the variance
of $%
\widehat{\theta}_{k}$. We note that whatever the value of $k$ the estimated
value of the variance of $\theta_{RB,k}$ is constant, and is quite
close to
the
Cram\'er--Rao bound. This is indeed an illustration of Lehmann--Scheff\'
{e}%
's theorem.

%f14 #&#
%f14 ###
\begin{figure}

\includegraphics{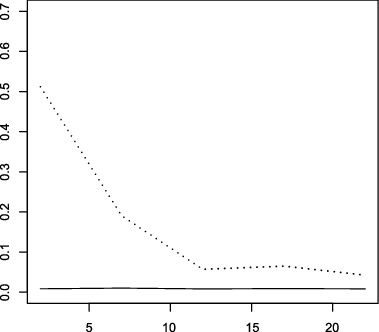}

\caption{Variance of $\widehat{\protect\theta}_{k}$, the initial estimator
(dotted line), along with the variance of $\protect\theta_{RB,k}$, the
Rao--Blackwellized estimator (solid line) with $n=100$ as a function of
$k$.}\label{fig14}
\end{figure}

%s5.2 #&#
%s5.2 ###
\subsection{\texorpdfstring{Importance sampling for rare event probabilities.}
{Importance sampling for rare event probabilities}}\label{sec5.2}

Here we consider the application of the approximating scheme under a
conditioning event defined through a large set, where this event is
also on the
large deviation scale. A development of the present section is
presented in
\cite{BroniatowskiCaron2011IS} and in Section~3 of \cite
{Caronetal2013}; see also \cite{BroniatowskiRitov2009}. Consider the
estimation of the large deviation probability for the mean of $n$
i.i.d. r.v.'s $u(\mathbf{X}_{i})$ satisfying the conditions of this
paper. This is a benchmark problem in the study of rare events; we
refer to \cite{Bucklew2004} for the background of this section.

Let $u_{1,n}:= na $ for fixed $a$ larger than $Eu(\mathbf{X})$. The
probability to be estimated is
\[
P_{n}:=P ( \mathbf{U}_{1,n}>u_{1,n} ).
\]
The importance sampling procedure substitutes the empirical estimator
%
%e56 #&#
\begin{eqnarray}
\widehat{P}_{n} &:=&\frac{1}{L}\sum
_{l=1}^{L}\mathbh{1} \bigl( \mathbf{U} %%
_{1,n}(l)>u_{1,n} \bigr)
\nonumber
\\[-8pt]
\\[-8pt]
\nonumber
&=&\frac{1}{L}\sum_{l=1}^{L}
\mathbh{1} \Biggl( \sum_{i=1}^{n}u \bigl(
\mathbf{X}%
_{i}(l) \bigr) >u_{1,n} \Biggr)
\label{IS1}
\end{eqnarray}
by
%
%e57 #&#
\begin{equation}\quad
P_{n}^{\mathrm{IS},g}:=\frac{1}{L}\sum
_{l=1}^{L}\frac{p ( u ( \mathbf{X}%
_{1}(l) )  ) \cdots p ( u ( \mathbf{X}_{n}(l) )
) }{%
g ( u ( \mathbf{X}_{1}(l) ) \cdots u ( \mathbf
{X}_{n}(l) )
) }\mathbh{1} \Biggl( \sum
_{i=1}^{n}u \bigl( \mathbf{X}_{i}(l)
\bigr) >u_{1,n} \Biggr). \label{IS2}
\end{equation}
In the above display (\ref{IS1}) the sample $\mathbf{X}_{1}^{n}(l)$ is
generated under i.i.d. sampling with distribution $P_{\mathbf{X}}$ and
the $L$ samples are i.i.d. In display (\ref{IS2}) the sample $\mathbf{X}
_{1}^{n}(l)$ is generated under the density $g$ on $\mathbb{R}^{n}$ (under
which the $\mathbf{X}_{i}$'s may not be independent). The $L$ samples $%
\mathbf{X}_{1}^{n} ( l ) $ are i.i.d.

It is well known that the optimal sampling density is
\[
p_{\mathrm{opt}} \bigl( x_{1}^{n} \bigr):=p \bigl(
 \mathbf{X}%
_{1}^{n}=x_{1}^{n}
\rrvert \mathbf{U}_{1,n}>u_{1,n} \bigr),
\]
which is not achievable since it presumes a known $P_{n}$. This optimal sampling
density produces the zero variance estimator $P_{n}$ itself with $L=1$.
However approximating $p_{\mathrm{opt}} ( x_{1}^{n} ) $ sharply at
least on
the first $k$ coordinates for large $k$ produces a large hit rate for the
importance sampling procedure, and pushes the importance factor toward 1.

Define the sampling density $g$ on $\mathbb{R}^{n}$ as
\[
g \bigl( x_{1}^{n} \bigr):=g_{nA}
\bigl(x_{1}^{k}\bigr)\prod_{i=k+1}^{n}
\pi _{u}^{a} ( x_{i} ),
\]
where $g_{nA}$ is defined in (\ref{h1_k}), and $\pi_{u}^{a}$ is the density
defined in (\ref{tiltedu(X)}). The approximating density $g_{nA}$ has been
used to simulate the $k$ first $\mathbf{X}_{i}$'s and the remaining
$n-k$'s are i.i.d. with the classical tilted density. The classical IS scheme
coincides with the present one with the difference that $k=1$ and $%
g_{A_{n}}(x_{1})=\pi_{u}^{a} ( x_{1} ) $, that is, simulating
under an
i.i.d. sampling scheme with common density $\pi_{u}^{a}$.

%f15 #&#
%f15 ###
\begin{figure}

\includegraphics{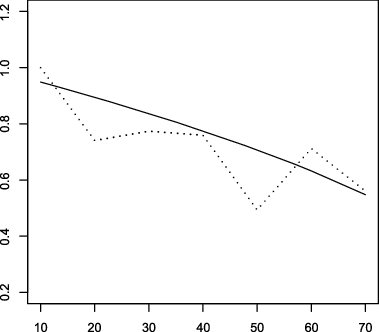}

\caption{Ratio of the empirical value of the MSE of the adaptive estimate
w.r.t. the empirical MSE of the i.i.d. twisted one (dotted line) along with
the true value of this ratio (solid line) as a function of $k$.}
\label{MSEvsk}
\end{figure}

Simulation under $g_{nA}$ is performed through a double step procedure: In
the first step, randomize the value of $\mathbf{U}_{1,n}/n$ on $ (
a,+\infty ) $ according to a proxy of its distribution
conditioned on $%
\mathbf{U}_{1,n}> na $; hence simulate a random variable $S$ on $ (
a,+\infty ) $ with density
%
%e58 #&#
\begin{equation}
p_{S}(s):=nm^{-1} ( a_{n} ) \bigl( \exp
\bigl(-nm^{-1} ( a ) (s-a)\bigr) \bigr) {\mathbh{1}}_{ ( a,+\infty ) }(s).
\label{condSglobal}
\end{equation}
Then plug in $nS$ in lieu of $u_{1,n}$ in (\ref{gasigmamu}) and iterate.
This is equivalent to considering each point in the target set as a
dominating point,
weighted by its conditional density under $ ( \mathbf
{U}_{1,n}> na  )
$. Simulation of $S$ under (\ref{condSglobal}) instead of (\ref{condS}) is
slightly suboptimal but much simpler. It can be proved that the MSE of the
estimate of $P_{n}$ in this new IS sampling scheme is reduced by a
factor $%
\sqrt{ ( n-k ) /n}$ with respect to the classical scheme when
calculated on large subsets of $\mathbb{R}^{k}$; see \cite%
{BroniatowskiCaron2011IS}. Figure~\ref{MSEvsk} shows, in a simple case, the
ratio of the empirical value of the MSE of the adaptive estimate w.r.t. the
empirical MSE of the i.i.d. twisted one, in the exponential case with $%
P_{n}=10^{-2}$ and $n=100$. The value of $k$ grows from $k=0$ (i.i.d.
twisted sample) to $k=70$ (according to the rule presented in \cite%
{BroniatowskiCaron2011IS}). This ratio stabilizes to $\sqrt{n-k}/\sqrt{n}$
for $L=2000$. The abscissa is $k$ and the solid line is $k\rightarrow
\sqrt{%
n-k}/\sqrt{n}$.

%re20 #&#
\begin{remark}
In the present context, Dupuis and Wang \cite{DupuisWang2004} have
shown that i.i.d.
sampling schemes can produce ``rogue paths'' which may alter the
properties of
the estimate, and the estimation of its variance. They consider an i.i.d.
random sample $X_{1}^{n}$ where $X_{1}$ has a normal distribution $N(1,1)$
and
\[
\mathcal{E}_{n}:= \biggl\{ x_{1}^{n}\dvtx
\frac{x_{1}+ \cdots+x_{n}}{n}\in A \biggr\},
\]
where $A= ( -\infty,a ) \cup ( b,+\infty ) $
with $%
a<1<b$. The quantity to be estimated is $P ( \mathcal{E}_{n} ) $.

Assuming that $a+b<2$, the standard i.i.d. IS scheme introduces the dominating
point $b$ and the family of i.i.d. tilted r.v.'s with common $N(b,1)$
distribution. ``Rogue paths'' generated under $N(b,1)$ may hit the set
$ (
-\infty,a ) $ with small probability under the sampling scheme,
hence producing a very large importance factor. The resulting variance
of the estimate is very sensitive with respect to these values, as
exemplified in their Table~1, page 24. Simulation of paths according to $G_{nS}$
with $S$ defined in (\ref{condSglobal}) produces their constructive
samples which yield both a hit rate close to 100\% and an importance
factor close to $P ( \mathcal{E}_{n} )$. We refer to \cite
{BroniatowskiCaron2011IS} for discussion and examples. We also note
that Dupis and Wang \cite{DupuisWang2004} propose an adaptive tilting
scheme, based on the product of the $\pi^{m_{i}}$, $1\leq i\leq n$,
which yields an efficient IS algorithm.
\end{remark}

%s6 #&#
\begin{appendix}
\section*{\texorpdfstring{Appendix}{Appendix}}\label{app}

For clarity the current term $a_{n}$ is denoted $a$ in all proofs.

%s6.1 #&#
%s5.3 ###
\subsection{\texorpdfstring{Three lemmas pertaining to the
partial sum under its final value.}
{Three lemmas pertaining to the partial sum under its final value}}

We state three lemmas which describe some functions of the random
vector $%
\mathbf{X}_{1}^{n}$ conditioned on $\mathcal{E}_{n}$. The r.v. $\mathbf{X}$
is assumed to have expectation $0$ and variance $1$.

%le21 #&#
\begin{lemma}
\label{LemmaMomentsunderE_n}It holds that $E_{P_{ na }} ( \mathbf
{X}_{1} )
=a,E_{P_{ na }} ( \mathbf{X}_{1}\mathbf{X}_{2} ) =a{}^{2}+O (
\frac{1}{n} ) $,\break  $E_{P_{ na }} ( \mathbf{X}_{1}^{2} )
=s^{2}(t)+a{}^{2}+ O ( \frac{1}{n} ) $ where $m(t)=a$.
\end{lemma}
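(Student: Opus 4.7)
The plan is to exploit two simple facts: (a) under $P_{na}$ the coordinates $\mathbf{X}_1,\dots,\mathbf{X}_n$ are exchangeable, and (b) the conditional law given $\mathbf{S}_{1,n}=na$ is the same under $P_\mathbf{X}$ and under the tilted distribution $\Pi^a$ (identity~\eqref{inv tilting}), which is the natural measure to work with since under $\Pi^a$ the summands have mean $a$ and variance $s^2(t)$. The first identity $E_{P_{na}}\mathbf{X}_1=a$ is immediate from exchangeability: $n E_{P_{na}}\mathbf{X}_1=E_{P_{na}}\mathbf{S}_{1,n}=na$.

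For $E_{P_{na}}\mathbf{X}_1^2$, I would use Bayes together with~\eqref{inv tilting} to write
\begin{equation*}
p_{na}(\mathbf{X}_1=x_1)=\pi^a(x_1)\,\frac{\pi^a(\mathbf{S}_{2,n}=na-x_1)}{\pi^a(\mathbf{S}_{1,n}=na)}.
\end{equation*}
Apply the local Edgeworth expansion of Remark~\ref{Remark Edgeworth array} to the numerator and denominator; the denominator equals $(s(t)\sqrt{n})^{-1}\mathfrak{n}(0)(1+O(1/n))$, and the numerator, after centering the sum of $n-1$ i.i.d.\ $\Pi^a$-variables, equals $(s(t)\sqrt{n-1})^{-1}\mathfrak{n}((a-x_1)/(s(t)\sqrt{n-1}))(1+O(1/n))$, uniformly in $x_1$ in the relevant range. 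Then
\begin{equation*}
E_{P_{na}}\mathbf{X}_1^2=\sqrt{\tfrac{n}{n-1}}\int x_1^2\,\pi^a(x_1)\,\exp\!\Bigl(-\tfrac{(a-x_1)^2}{2 s^2(t)(n-1)}\Bigr)\,dx_1\;+\;O(1/n).
\end{equation*}
Expanding the exponential to first order yields the leading contribution $\int x_1^2\pi^a(x_1)\,dx_1=s^2(t)+a^2$, while the next-order term is bounded by $\frac{1}{2s^2(t)(n-1)}E_{\Pi^a}[\mathbf{X}_1^2(a-\mathbf{X}_1)^2]=O(1/n)$, the cumulant involved being finite by the Cramer condition. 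The prefactor $\sqrt{n/(n-1)}=1+O(1/n)$ only changes the remainder by a further $O(1/n)$. This gives $E_{P_{na}}\mathbf{X}_1^2=s^2(t)+a^2+O(1/n)$.

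The identity for $E_{P_{na}}(\mathbf{X}_1\mathbf{X}_2)$ is then obtained algebraically, without a further Edgeworth calculation. Using $\mathbf{S}_{1,n}=na$ almost surely under $P_{na}$ and exchangeability,
\begin{equation*}
n^2 a^2=E_{P_{na}}\mathbf{S}_{1,n}^2=n\,E_{P_{na}}\mathbf{X}_1^2+n(n-1)\,E_{P_{na}}(\mathbf{X}_1\mathbf{X}_2),
\end{equation*}
which, once the third identity is plugged in, gives $E_{P_{na}}(\mathbf{X}_1\mathbf{X}_2)=a^2-s^2(t)/(n-1)+O(1/n^2)=a^2+O(1/n)$.

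The only genuine work is the Edgeworth-based computation of $E_{P_{na}}\mathbf{X}_1^2$: one must check that the local expansion is uniform enough in $x_1$ that integration against $x_1^2\pi^a(x_1)$ is legitimate and that the remainder contributes $O(1/n)$. This is where I would spend the most care; however it is a direct application of the setup already laid out in Remark~\ref{Remark Edgeworth array}, and the integrability is secured by the Cramer condition, which guarantees all moments under $\Pi^a$.
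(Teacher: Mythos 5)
Your proposal is correct, and for the central computation --- $E_{P_{na}}\mathbf{X}_1^2$ --- it follows exactly the paper's route: write $p_{na}(\mathbf{X}_1=x)$ via Bayes and the tilting invariance as $\pi^a(x)\,\pi^a(\mathbf{S}_{2,n}=na-x)/\pi^a(\mathbf{S}_{1,n}=na)$, normalize, expand both densities by a local Edgeworth expansion, and integrate against $x^2$. Where you genuinely depart from the paper is in the other two identities. The paper obtains $E_{P_{na}}(\mathbf{X}_1\mathbf{X}_2)$ by repeating the same tilted Edgeworth analysis on the \emph{joint} density $p_{na}(\mathbf{X}_1=x,\mathbf{X}_2=y)$, whereas you deduce it algebraically from exchangeability and the almost-sure constraint $\mathbf{S}_{1,n}=na$, via $n^2a^2=nE_{P_{na}}\mathbf{X}_1^2+n(n-1)E_{P_{na}}(\mathbf{X}_1\mathbf{X}_2)$; this is cleaner, avoids a second expansion entirely, and even yields the explicit correction $-s^2(t)/(n-1)$ rather than an unspecified $O(1/n)$. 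Likewise your one-line derivation of $E_{P_{na}}\mathbf{X}_1=a$ from symmetry is the right argument (the paper does not bother to prove this identity at all). One small caution, which applies equally to the paper's own wording: to certify that the remainder is $O(1/n)$ rather than merely $o(1/\sqrt{n})$ you should carry the Edgeworth expansion to second order, observing that the first-order term $P_3$ contributes $O((1+|x|)/n)$ after evaluation at $z=(a-x)/(s(t)\sqrt{n-1})$ (since $H_3$ is odd and vanishes at $0$) and hence $O(1/n)$ after integration against $x^2\pi^a(x)$; your statement that the numerator is $1+O(1/n)$ ``uniformly in $x_1$'' is slightly too strong, but the integrated conclusion stands.
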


\begin{pf}
Using
\[
p_{ na }(\mathbf{X}_{1}=x)=\frac{p_{\mathbf{S}_{2,n}} (  na -x )
p_{%
\mathbf{X}_{1}}(x)}{p_{\mathbf{S}_{1,n}} (  na  ) }=
\frac{\pi_{%
\mathbf{S}_{2,n}}^{a} (  na -x ) \pi_{\mathbf
{X}_{1}}^{a}(x)}{\pi_{%
\mathbf{S}_{1,n}}^{a} (  na  ) },
\]
normalizing both $\pi_{\mathbf{S}_{2,n}}^{a} (  na -x ) $ and
$\pi_{%
\mathbf{S}_{1,n}}^{a} (  na  ) $ and making use of a first order
Edgeworth expansion in those expressions yields $E_{P_{ na }} (
\mathbf{X}%
_{1}^{2} ) =s^{2}(t)+a{}^{2}+ O ( \frac{1}{n} ) $. A~similar
expansion for the joint density $p_{ na }(\mathbf{X}_{1}=x,\mathbf
{X}_{2}=y)$%
, with the same tilted distribution $\pi^{a}$ produces the limit expression
of $E_{P_{ na }} ( \mathbf{X}_{1}\mathbf{X}_{2} ) $.
\end{pf}

%le22 #&#
\begin{lemma}
\label{LemmaMaxm_in} Assume \textup{(\ref{E1})}. Then \textup{(i)} $\max_{1\leq i\leq
k}\llvert  m_{i}\rrvert =a+o_{P_{ na }} ( \varepsilon_{n} ) $.
Also \textup{(ii)}~$\max_{1\leq i\leq k}s_{i}^{2}$, $\max_{1\leq i\leq k}\mu_{3}^{i}$
and $\max_{1\leq i\leq k}\mu_{4}^{i}$ tend in $P_{ na }$ probability to the
variance, skewness and kurtosis of $\pi^{\underline{a}}$ where
$\underline{a%
}:=\lim_{n\rightarrow\infty}a_{n}$.
\end{lemma}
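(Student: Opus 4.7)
The plan for (i) is to reduce the statement to a maximal inequality for a martingale. Writing $m_i - a = -W_i/(n-i)$ with $W_i := \mathbf{S}_{1,i} - ia$, it suffices to control $\max_{1\le i\le k}|W_i|/(n-i)$ in $P_{na}$-probability. The key observation I would exploit is that $M_i := W_i/(n-i)$ is an $(\mathcal{F}_i)$-martingale under $P_{na}$, where $\mathcal{F}_i = \sigma(\mathbf{X}_1,\ldots,\mathbf{X}_i)$. This follows from exchangeability of $\mathbf{X}_{i+1},\ldots,\mathbf{X}_n$ conditionally on $(\mathbf{X}_1^i,\mathbf{S}_{1,n}=na)$, which forces $E_{P_{na}}[\mathbf{X}_{i+1}\mid\mathcal{F}_i] = (na-\mathbf{S}_{1,i})/(n-i) = m_i$; a direct rearrangement then gives $E_{P_{na}}[W_{i+1}\mid\mathcal{F}_i]=\frac{n-i-1}{n-i}W_i$ and hence $E_{P_{na}}[M_{i+1}\mid\mathcal{F}_i] = M_i$.

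With the martingale structure in hand I would invoke Doob's $L^2$ maximal inequality:
\[
E_{P_{na}}\!\Bigl[\max_{1\le i\le k} M_i^2\Bigr] \le 4\,E_{P_{na}}[M_k^2] = \frac{4\,\mathrm{Var}_{P_{na}}(\mathbf{S}_{1,k})}{(n-k)^2}.
\]
To estimate the right-hand side I would perform Edgeworth expansions on the identity $p_{na}(\mathbf{S}_{1,k}=u) = \pi^a(\mathbf{S}_{1,k}=u)\,\pi^a(\mathbf{S}_{k+1,n}=na-u)/\pi^a(\mathbf{S}_{1,n}=na)$, in the same spirit as the proof of Lemma \ref{LemmaMomentsunderE_n}, to extract the bridge variance $\mathrm{Var}_{P_{na}}(\mathbf{S}_{1,k}) = \frac{k(n-k)}{n}s^2(t^a)(1+o(1))$. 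Consequently $E_{P_{na}}[\max_i M_i^2] \le C/(n-k)$, and Markov's inequality combined with (E1)---which guarantees $\epsilon_n^2(n-k)\to\infty$---yields $\max_i|M_i| = o_{P_{na}}(\epsilon_n)$, which is exactly the content of (i).

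For (ii), I would appeal to a continuous-mapping argument. The functions $t\mapsto s^2(t),\mu_3(t),\mu_4(t)$ are smooth at $t^{\underline{a}}$ and the inverse $m^{-1}$ is smooth at $\underline{a}$ by steepness of $\log\Phi$. Composing with the uniform convergence $m_i\to a$ furnished by (i) and with the convergence $a_n\to\underline{a}$ then delivers the $P_{na}$-probability convergence of $\max_i s_i^2$, $\max_i\mu_3^i$, $\max_i\mu_4^i$ to the variance, skewness and kurtosis of $\pi^{\underline{a}}$.

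The main technical obstacle I anticipate is controlling the remainder in the bridge-variance identity uniformly in $k$ throughout the range allowed by (K1)--(K2); in particular the ratio of two densities in the Bayes identity has to be bounded with a multiplicative error $1+o(1)$ uniformly in the integration variable. The Edgeworth expansions must be carried out in the triangular-array form of Remark \ref{Remark Edgeworth array}, whose validity rests on the $L^r$ hypothesis for the characteristic function of $\mathbf{X}$ assumed at the outset.
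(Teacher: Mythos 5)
Your argument is essentially the paper's: the identity $E_{P_{na}}\left[V_i(V_k-V_i)\right]=0$ that drives the paper's Kolmogorov-type maximal inequality is exactly the orthogonality of increments of your martingale $M_i=(\mathbf{S}_{1,i}-ia)/(n-i)$ (indeed $V_{i+1}=m_i-a=-M_i$), Doob's $L^2$ inequality plays the role of Kolmogorov's, and part (ii) is handled by the same continuity argument in both. The one place where you diverge is the variance bound, and it is worth flagging. The paper expresses $V_k$ through the \emph{complementary} sum $\mathbf{S}_{k,n}$, so the crude moments of Lemma \ref{LemmaMomentsunderE_n} ($\mathrm{Var}_{P_{na}}(\mathbf{X}_1)=s^2(t)+O(1/n)$, $\mathrm{Cov}_{P_{na}}(\mathbf{X}_1,\mathbf{X}_2)=O(1/n)$) already give $\mathrm{Var}_{P_{na}}(V_k)=O(1/(n-k))$, because that sum has only $n-k+1$ terms. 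You instead work with $\mathrm{Var}_{P_{na}}(\mathbf{S}_{1,k})$; there the same crude moments only yield $O(k+k^2/n)=O(n)$ when $k/n\to 1$, which is not enough (it would require $n/\bigl((n-k)^2\epsilon_n^2\bigr)\to 0$, strictly stronger than (\ref{E1})). So your route genuinely needs the exact bridge constant $\frac{k(n-k)}{n}s^2(t^a)$, i.e. the sign and size of the $1/n$ term in the covariance, and the local-CLT computation you sketch --- with uniformity in $u$ and tail control when integrating $u^2$ against the approximated density --- is a nontrivial piece of extra work that the paper avoids. It can be short-circuited: on the event $(\mathbf{S}_{1,n}=na)$ one has $\mathbf{S}_{1,k}-ka=-(\mathbf{S}_{k+1,n}-(n-k)a)$ identically, so $\mathrm{Var}_{P_{na}}(\mathbf{S}_{1,k})=\mathrm{Var}_{P_{na}}(\mathbf{S}_{k+1,n})=O(n-k)$ follows from Lemma \ref{LemmaMomentsunderE_n} alone. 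With that observation your proof closes and is equivalent to the paper's.
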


\begin{pf}
(i) Define
\begin{eqnarray*}
V_{i+1}&:=&m(t_{i})-a
\\
& =&\frac{S_{i+1,n}}{n-i}-a.
\end{eqnarray*}
We state that
%
%e59 #&#
\begin{equation}
{\max_{0\leq{i\leq{k-1}}}|V_{i+1}|=}o_{P_{ na }} ( \varepsilon
_{n} ), \label{maxV_i+1}
\end{equation}
namely for all positive $\delta$
\[
\lim_{n\rightarrow\infty}P_{ na } \Bigl( \max_{0\leq{i\leq{k-1}}%
}|V_{i+1}|>
\delta\varepsilon_{n} \Bigr) =0,
\]
which we obtain following the proof of Kolmogorov maximal inequality. Define
\[
A_{i}:= \bigl( \bigl( |V_{i+1}|\geq\delta\varepsilon_{n}
\bigr) \mbox{ and }%
\bigl( |V_{j}|<\delta\varepsilon_{n}\mbox{
for all }j<i+1 \bigr) \bigr)
\]
from which
\[
\Bigl( \max_{0\leq{i}\leq{k-1}}|V_{i+1}|>\delta
\varepsilon_{n} \Bigr) =\bigcup_{i=0}^{k-1}A_{i}.
\]
It holds that
\begin{eqnarray*}
E_{P_{ na }}V_{k}^{2}& =&\int_{\cup A_{i}}V_{k}^{2}\,dP_{ na }+
\int_{ (
\cup
A_{i} ) ^{c}}V_{k}^{2}\,dP_{ na }
\\
& \geq&{\int_{\cup A_{i}}} \bigl( {V_{i}^{2}+2}
( V_{k}-V_{i} ) V_{i} \bigr) \,{dP_{ na }+
\int_{ ( \cup A_{i} ) ^{c}}} \bigl( {%
V_{i}^{2}+2}
( V_{k}-V_{i} ) V_{i} \bigr) \,{d}P_{ na }
\\
& \geq&\int_{\cup A_{i}}V_{i}^{2}\,dP_{ na }
\\
& \geq&\delta^{2}\varepsilon_{n}^{2}{\sum
_{j=0}^{k-1}P_{ na }(A_{j})}
\\
& =&\delta^{2}\varepsilon_{n}^{2}P_{ na }
\Bigl( {\max_{0\leq{i}\leq{k-1}%
}|V_{i+1}|>}\delta
\varepsilon_{n} \Bigr).
\end{eqnarray*}

The third line above follows from $EV_{i} ( V_{k}-V_{i} ) =0$ which
is proved below. Hence
\[
P_{ na } \Bigl( {\max_{0\leq{i}\leq{k-1}}|V_{i+1}|>}
\delta\varepsilon _{n} \Bigr) \leq\frac{\operatorname{Var}_{P_{ na }}(V_{k})}{\delta^{2}\varepsilon
_{n}^{2}}=%
\frac{1}{\delta^{2}\varepsilon_{n}^{2} ( n-k ) }\bigl(1+o(1)\bigr),
\]
where we used Lemma \ref{LemmaMomentsunderE_n}; therefore (\ref{maxV_i+1})
holds under (\ref{E1}). Direct calculation yields $E_{P_{ na }} (
V_{i}(V_{k}-V_{i}) ) ={0}$, which completes the proof of (i).

(ii) follows from (i) since $\lim_{n\rightarrow\infty}\max_{1\leq
i\leq
k}m(t_{i})=\underline{a}$.
\end{pf}

We also need the order of magnitude of $\max ( |\mathbf
{X}_{1}|,\ldots,|%
\mathbf{X}_{k}| ) $ under $P_{ na }$ which is stated in the following
result.

%le23 #&#
\begin{lemma}
\label{Lemma_max_X_i_under_conditioning} It holds that $\max (
\llvert
\mathbf{X}_{1}\rrvert,\ldots,\llvert \mathbf{X}_{n}\rrvert
)
=O_{P_{ na }} ( \log n ) $.
\end{lemma}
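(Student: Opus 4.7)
The plan is to combine the tilting invariance (\ref{inv tilting}) with a union bound and a tail estimate coming from the Cramer condition satisfied by the tilted density $\pi ^{a}$. By (\ref{inv tilting}) the conditional law $P_{na}$ of $\mathbf{X}_{1}^{n}$ given $\mathbf{S}_{1,n}=na$ is the same as the law of i.i.d. $\pi ^{a}$-distributed variables conditioned on their sum; in particular the coordinates are exchangeable under $P_{na}$, so a union bound gives
\begin{equation*}
P_{na}\left( \max_{1\leq i\leq n}\left\vert \mathbf{X}_{i}\right\vert >K\log n\right) \leq n\,P_{na}\left( \left\vert \mathbf{X}_{1}\right\vert >K\log n\right) .
\end{equation*}

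First I would write, via Bayes applied under $\Pi ^{a}$,
\begin{equation*}
P_{na}\left( \left\vert \mathbf{X}_{1}\right\vert >K\log n\right) =\int_{\left\vert x\right\vert >K\log n}\pi ^{a}(x)\,\frac{\pi ^{a}\left( \mathbf{S}_{2,n}=na-x\right) }{\pi ^{a}\left( \mathbf{S}_{1,n}=na\right) }\,dx.
\end{equation*}
The key step is to show that the ratio of densities appearing on the right is uniformly bounded by a constant $C$ independent of $x$ and of $n$. For the denominator, the local central limit theorem applied to i.i.d.\ $\pi ^{a}$ variables (with mean $a$, variance $s^{2}(t^{a})$, and characteristic function in $L^{r}$, cf.\ Remark~\ref{Remark Edgeworth array}) yields $\pi ^{a}\left( \mathbf{S}_{1,n}=na\right) \sim 1/(s(t^{a})\sqrt{2\pi n})$. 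For the numerator the $L^{r}$ assumption on the characteristic function provides the uniform bound $\sup_{y}\pi ^{a}\left( \mathbf{S}_{2,n}=y\right) \leq C_{1}/\sqrt{n-1}$ via inversion of the Fourier transform. Dividing gives $C$ as required.

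Once the ratio is controlled, I would conclude by invoking the Cramer condition for $\pi ^{a}$: since $\pi ^{a}$ has a moment generating function finite in a neighbourhood of zero, there exist positive constants $c,C^{\prime }$ such that $\pi ^{a}\left( \left\vert \mathbf{X}_{1}\right\vert >t\right) \leq C^{\prime }e^{-ct}$ for all $t>0$. Substituting $t=K\log n$ yields
\begin{equation*}
P_{na}\left( \max_{1\leq i\leq n}\left\vert \mathbf{X}_{i}\right\vert >K\log n\right) \leq n\cdot C\cdot C^{\prime }n^{-cK}=C^{\prime \prime }n^{1-cK},
\end{equation*}
which tends to $0$ as soon as $K>1/c$; choosing $K$ large enough to make this bound arbitrarily small in the limit delivers the claim. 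The main obstacle is the uniform control of the ratio of densities in the Bayes decomposition, which forces the use of the $L^{r}$ assumption rather than a pointwise local CLT (the latter degrading for $|x|$ growing with $n$); all other ingredients are soft.
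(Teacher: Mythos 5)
Your proof is correct and follows essentially the same route as the paper's: a union bound over the coordinates, the tilting/Bayes decomposition $P_{na}\left( \left\vert \mathbf{X}_{1}\right\vert >t\right) =\int_{\left\vert x\right\vert >t}\pi ^{a}(x)\,\pi ^{a}\left( \mathbf{S}_{2,n}=na-x\right) /\pi ^{a}\left( \mathbf{S}_{1,n}=na\right) dx$, a uniform bound on the density ratio, and a Chernoff-type bound on the $\pi ^{a}$-tail evaluated at $t=K\log n$. The only (harmless) variation is in how the ratio is bounded uniformly in $x$: the paper uses a first-order Edgeworth expansion (the Gaussian factor absorbing the Hermite polynomial), whereas you invoke the $L^{r}$ hypothesis to bound $\sup_{y}\pi ^{a}\left( \mathbf{S}_{2,n}=y\right) $ by $C_{1}/\sqrt{n-1}$ directly --- which is if anything cleaner for the required uniformity; just make explicit that the constants $c,C^{\prime }$ in your tail bound can be chosen uniformly in $n$, which holds because $a_{n}$ converges and hence $t^{a_{n}}$ stays in a compact subset of the domain of $\Phi $.
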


\begin{pf}
Set $\llvert \mathbf{X}_{1}\rrvert:=\mathbf{X}_{i}^{-}+\mathbf
{X}%
_{i}^{+}$ with $\mathbf{X}_{i}^{-}:=-\min ( 0,\mathbf{X}_{i}
) $,  $\mathbf{X}_{i}^{+}:=\break\max ( 0,\mathbf{X}_{i} ) $; it is
enough to
prove that $\max_{i}\mathbf{X}_{i}^{-}=O_{P_{ na }} ( \log n )$
and $\max_{i}\mathbf{X}_{i}^{+}=O_{P_{ na }} (\log n )$. Since
$E[\exp(t\mathbf{X})]$ is finite in a nonempty neighborhood of $0$ so
are $E[\exp(t\mathbf{X}^{-})]$ and $E[\exp(t\mathbf{X}^{+})]$. We hence
prove the lemma for positive r.v.'s $\mathbf{X}_{i}$ 's only.

Denote $a$ the current term of the sequence $a_{n}$. For all $t$ it
holds that
\begin{eqnarray*}
P_{ na } \bigl( \max ( \mathbf{X}_{1},\ldots,
\mathbf{X}_{n} ) >t \bigr) & \leq &nP_{ na } (
\mathbf{X}_{n}>t )
\\
& =&n\int_{t}^{\infty}\pi^{a} (
\mathbf{X}_{n}=u ) \frac{\pi
^{a}(%
\mathbf{S}_{1,n-1}= na -u)}{\pi^{a} ( \mathbf{S}_{1,n}= na  ) }\,du.
\end{eqnarray*}
Let $\tau$ be such that $m(\tau)=a$. Denote $s:=s(\tau)$. Center and
normalize both $\mathbf{S}_{1,n}$ and $\mathbf{S}_{1,n-1}$with respect to
the density $\pi^{a}$ in the last line above, denoting $\overline{\pi
_{n}^{a}}$ the density of $\overline{\mathbf{S}_{1,n}}:= ( \mathbf
{S}%
_{1,n}- na  ) /s\sqrt{n}$ when $\mathbf{X}$ has density $\pi^{a}$ with
mean $a$ and variance $s^{2}$, we obtain
\begin{eqnarray*}
&&P_{ na } \bigl( \max ( \mathbf{X}_{1},\ldots,
\mathbf{X}_{n} ) >t \bigr) \\
&&\qquad \leq n\frac{\sqrt{n}}{\sqrt{n-1}}\\
&&\qquad\quad{}\times\int
_{t}^{\infty}\pi^{a} ( \mathbf{X%
}_{n}=u )
 \frac{\overline{\pi_{n-1}^{a}} ( \overline{\mathbf
{S}_{1,n-1}}= (
 na -u-(n-1)a ) / ( s\sqrt{n-1} )  ) }{\overline{\pi
_{n}^{a}} ( \overline{\mathbf{S}_{1,n}}=0 ) }\,du.
\end{eqnarray*}
Under the sequence of densities $\pi^{a}$ the triangular array $ (
\mathbf{X}_{1},\ldots,\mathbf{X}_{n} ) $ obeys a first order Edgeworth
expansion
\begin{eqnarray*}
&&P_{ na } \bigl( \max ( \mathbf{X}_{1},\ldots,
\mathbf{X}_{n} ) >t \bigr)\\
&&\qquad \leq n\frac{\sqrt{n}}{\sqrt{n-1}}\int
_{t}^{\infty}\pi^{a} ( \mathbf{X%
}_{n}=u )
 \frac{\mathfrak{n} (  ( a-u ) /s\sqrt{n-1} ) \mathbf
{P}%
( u,i,n ) +o(1)}{\mathfrak{n} ( 0 ) +o(1)}\,du
\\
&&\qquad \leq n\mathit{Cst}\int_{t}^{\infty}\pi^{a} (
\mathbf{X}_{n}=u ) \,du
\end{eqnarray*}
for some constant $\mathit{Cst}$ independent of $n$ and $\tau$ and
\[
\mathbf{P} ( u,i,n ):=1+P_{3} \bigl( ( a-u ) /s\sqrt {n-1}%
\bigr),
\]
where $P_{3}(x)=\frac{\mu_{3}}{6s^{3}} ( x^{3}-3x ) $ is the third
Hermite polynomial; $s^{2}$ and $\mu_{3}$ are the second and third centered
moments of $\pi^{a}$. We have used the fact that the sequence $a$ converges
to bound all moments of the tilted densities $\pi^{a}$. We used
uniformity on $u$ in the remaining term of the Edgeworth expansions.
Making use of the Chernoff inequality to bound $\Pi^{a} ( \mathbf
{X}_{n}>t ) $,
\[
P_{ na } \bigl( \max ( \mathbf{X}_{1},\ldots,
\mathbf{X}_{n} ) >t \bigr) \leq n\mathit{Cst}\frac{\Phi(t+\lambda)}{\Phi(t)}e^{-\lambda t}
\]
for any $\lambda$ such that $\phi(t+\lambda)$ is finite. For $t$ such
that
\[
t/\log n\rightarrow\infty
\]
it holds that
\[
P_{ na } \bigl( \max ( \mathbf{X}_{1},\ldots,
\mathbf{X}_{n} ) <t \bigr) \rightarrow1,
\]
which proves the lemma.
\end{pf}

%s6.2 #&#
%s5.4 ###
\subsection{\texorpdfstring{Proof of the approximations resulting
from Edgeworth expansions
in Theorem 2.}{Proof of the approximations resulting from Edgeworth expansions
in Theorem \ref{Thm:Approx_local_cond_density}}}

We complete the calculation leading to (\ref{Adem}) and (\ref{Bdem}).

Set $Z_{i+1}:= ( m_{i}-Y_{i+1} ) /s_{i}\sqrt{n-i-1}$.

It then holds that
%
%e60 #&#
\begin{eqnarray}\label{Hermite}
&&\overline{\pi_{n-i-1}} ( Z_{i+1} )\nonumber\\
 &&\qquad =\mathfrak
{n}(Z_{i+1}) \left[ \matrix{ \displaystyle 1+
\frac{1}{\sqrt{n-i-1}}P_{3}(Z_{i+1})+\frac{1}{n-i-1}P_{4}(Z_{i+1})
\vspace*{2pt}\cr
\displaystyle +\frac{1}{ ( n-i-1 ) ^{3/2}}P_{5}(Z_{i+1})%
}
\right]
\\
&&\qquad\quad{} +O_{P_{ na }} \biggl( \frac{P_{5}(Z_{i+1})}{ ( n-i-1 )
^{3/2}} \biggr).
\nonumber
\end{eqnarray}

We perform an expansion in $\mathfrak{n}(Z_{i+1})$ up to order $3$, with
a first order term $\mathfrak{n} ( -Y_{i+1}/ ( s_{i}\sqrt {n-i-1}%
)  )$, namely
%
%e61 #&#
\begin{eqnarray}\label{approx gauss}
&&\mathfrak{n}(Z_{i+1})\nonumber\\
&&\qquad =\mathfrak{n} \bigl( -Y_{i+1}/ (
s_{i}\sqrt {n-i-1}%
) \bigr)
\\
&&\qquad\quad{}\times \pmatrix{\displaystyle 1+\frac{Y_{i+1}m_{i}}{s_{i}^{2} ( n-i-1 ) }+
\frac{m_{i}^{2}}{%
2s_{i}^{2} ( n-i-1 ) } \biggl( \frac{Y_{i+1}^{2}}{s_{i}^{2} (
n-i-1 ) }-1 \biggr)
\vspace*{2pt}\cr
\displaystyle +\frac{m_{i}^{3}}{6s_{i}^{3} ( n-i-1 ) ^{3/2}}\frac{\mathfrak
{n}%
^{(3)} ( {Y^{\ast}}/{ ( s_{i}\sqrt{n-i-1} ) } ) }{
\mathfrak{n} ( -Y_{i+1}/ ( s_{i}\sqrt{n-i-1} )  ) }},\nonumber
\end{eqnarray}
where $Y^{\ast}=\frac{1}{s_{i}\sqrt{n-i-1}}(-Y_{i+1}+\theta m_{i})$
with $%
\llvert \theta\rrvert <1$.

Lemmas \ref{LemmaMaxm_in} and \ref{Lemma_max_X_i_under_conditioning} provide
the orders of magnitude of the random terms in the above displays when
sampling under $P_{ na }$.\vspace*{1pt}

Use those lemmas to obtain
%
%e62 #&#
\begin{equation}
\frac{Y_{i+1}m_{i}}{s_{i}^{2} ( n-i-1 ) }=\frac{Y_{i+1}}{n-i-1} %%
\bigl( a+o_{P_{ na }} (
\varepsilon_{n} ) \bigr) \label{control 1}
\end{equation}
and
\[
\frac{m_{i}^{2}}{s_{i}^{2} ( n-i-1 ) }=\frac{1}{n-i-1} \bigl( a+o_{P_{ na }} (
\varepsilon_{n} ) \bigr) ^{2}.
\]
Also when (\ref{E1}) and (\ref{E2}) holds, then the dominant terms in
the bracket in (\ref{approx
gauss}) are precisely those in the two displays just above. This yields
%
%e63 #&#
\[
\mathfrak{n}(Z_{i+1})=\mathfrak{n} \biggl( \frac{-Y_{i+1}}{s_{i}\sqrt {n-i-1}}%
\biggr) \pmatrix{\displaystyle 1+\frac{aY_{i+1}}{s_{i}^{2}(n-i-1)}-
\frac{a^{2}}{2s_{i}^{2}(n-i-1)}
\vspace*{2pt}\cr
\displaystyle +\frac{o_{P_{ na }}(\varepsilon_{n}\log n)}{n-i-1}}.
\]

We now need a precise evaluation of the terms in the Hermite
polynomials in~(%
\ref{Hermite}). This is achieved using Lemmas \ref{LemmaMaxm_in} and
\ref%
{Lemma_max_X_i_under_conditioning} which provide uniformity on $i$ between
$1$ and $k=k_{n}$ in all terms depending on the sample path
$Y_{1}^{k}$. The
Hermite polynomials depend upon the moments of the underlying density $%
\pi^{m_{i}}$. Since $\overline{\pi_{1}^{m_{i}}}$ has expectation $0$ and
variance $1$ the terms corresponding to $P_{1}$  and $P_{2}$ vanish.
For up to order 4 polynomials, write $P_{3}(x)=\frac{\mu
_{3}^{(i)}}{6 (
s_{i} ) ^{3}}H_{3}(x)$, $P_{4}(x)=\frac{(\mu_{3}^{i})^{2}}{72 (
s_{i} ) ^{6}}H_{6}(x)+\frac{\mu_{4}^{(i,n)}-3 ( s_{i} )
^{4}}{%
24 ( s_{i} ) ^{4}}H_{4}(x) $ with $H_{3}(x):=x^{3}-3x$, $H_{4}(x):=
x^{4}-6x^{2}+3$ and $H_{6}(x):=x^{6}-15x^{4}+45x^{2}-15$.

Using Lemma \ref{LemmaMaxm_in} it appears that the terms in $x^{j}$,
$j\geq
3 $ in $P_{3}$ and $P_{4}$ will play no role in the asymptotic behavior
in (%
\ref{Hermite}) with respect to the constant term in $P_{4}$ and the
term in $%
x$ from $P_{3}$. Indeed substituting $x$ by $Z_{i+1}$ and dividing by $%
n-i-1$, the term in $x^{2}$ in $P_{4}$ is $O_{P_{ na }} ( \log
n ) ^{2}/(n-i)^{2}$ where we have used Lemma \ref{LemmaMaxm_in}.
These terms
are of smaller order than the term $-3x$ in $P_{3}$ which is $-\frac{\mu
_{3}^{i}}{2s_{i}^{4} ( n-i-1 ) } ( a-Y_{i+1} ) =\frac
{1}{%
n-i-1}O_{P_{ na }} ( \log n ) $.

It holds that
\begin{eqnarray*}
\frac{P_{3}(Z_{i+1})}{\sqrt{n-i-1}}& =&-\frac{\mu
_{3}^{i}}{2s_{i}^{4} (
n-i-1 ) } ( m_{i}-Y_{i+1} )
\\
&&{} +\frac{\mu_{3}^{i} ( m_{i}-Y_{i+1} ) ^{3}}{6 (
s_{i} ) ^{6}(n-i-1)^{2}},
\end{eqnarray*}
which yields
%
%e64 #&#
\begin{equation}
\label{P3} \frac{P_{3}(Z_{i+1})}{\sqrt{n-i-1}}=-\frac{\mu_{3}^{i}}{2s_{i}^{4} (
n-i-1 ) } ( a-Y_{i+1} )
+\frac{O_{P_{ na }} ( \log
n ) ^{3}}{ ( n-i-1 ) ^{2}}.
\end{equation}
For the term of order $4$ it holds that
\[
\frac{P_{4}(Z_{i+1})}{n-i-1}=\frac{1}{n-i-1} \biggl( \frac{(\mu
_{3}^{i})^{2}}{%
72s_{i}^{6}}H_{6}(Z_{i+1})+
\frac{\mu_{4}^{i}-3s_{i}^{4}}{24s_{i}^{4}}%
H_{4}(Z_{i+1}) \biggr),
\]
which yields
%
%e65 #&#
\begin{equation}
\label{P4} \frac{P_{4}(Z_{i+1})}{n-i-1}=\frac{\mu_{4}^{i}-3s_{i}^{4}}{%
8s_{i}^{4} ( n-i-1 ) }-\frac{15(\mu_{3}^{i})^{2}}{%
72s_{i}^{6}(n-i-1)}+
\frac{O_{P_{ na }} ( (\log n)^{2} ) }{ (
n-i-1 ) ^{2}}.
\end{equation}
The fifth term in the expansion plays no role in the asymptotics.

In summary, comparing the remainder terms in (\ref{P3}) and (\ref{P4}), we
obtain
\[
\overline{\pi_{n-i-1}} ( Z_{i+1} ) =\mathfrak{n} \bigl(
-Y_{i+1}/ ( s_{i}\sqrt{n-i-1} ) \bigr)\cdot A\cdot B+O_{P_{ na }}
\biggl( \frac{%
P_{5}(Z_{i+1})}{ ( n-i-1 ) ^{3/2}} \biggr),
\]
where $A$ and $B$ are given in (\ref{Adem}) and (\ref{Bdem}).

%s6.3 #&#
%s5.5 ###
\subsection{\texorpdfstring{Final step of the proof of Theorem 2.}
{Final step of the proof of Theorem \ref{Thm:Approx_local_cond_density}}}

We make use of the following version of the law of large numbers for
triangular arrays; see \cite{Taylor1985} Theorem 3.1.3.

%th24 #&#
\begin{theorem}
\label{TheoremTaylor}Let $X_{i,n}$, $1\leq i\leq k$ denote an array of
row-wise real exchangeable r.v.'s and $\lim_{n\rightarrow\infty}k=\infty$.
Let $\rho_{n}:=EX_{1,n}X_{2,n}$. Assume that for some finite $\Gamma$,
$%
EX_{1,n}^{2}\leq\Gamma$. If for some doubly indexed sequence $ (
a_{i,n} ) $ such that $\lim_{n\rightarrow\infty}%
\sum_{i=1}^{k}a_{i,n}^{2}=0$ it holds that
\[
\lim_{n\rightarrow\infty}\rho_{n} \Biggl( \sum
_{i=1}^{k}a_{i,n}^{2} \Biggr)
^{2}=0
\]
and then
\[
\lim_{n\rightarrow\infty}\sum_{i=1}^{k}a_{i,n}X_{i,n}=0
\]
in probability.
\end{theorem}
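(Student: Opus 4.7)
The natural approach is an $L^{2}$ argument: establish $E[S_n^2] \to 0$ for $S_n := \sum_{i=1}^{k} a_{i,n} X_{i,n}$ and invoke Chebyshev's inequality. The first move is to expand the second moment and exploit the row-wise exchangeability, which forces $E[X_{i,n}^2] = E[X_{1,n}^2] \leq \Gamma$ and $E[X_{i,n} X_{j,n}] = \rho_n$ for $i \neq j$. This gives
\[
E[S_n^2] \;=\; E[X_{1,n}^2]\sum_{i=1}^k a_{i,n}^2 \;+\; \rho_n \sum_{i\neq j} a_{i,n} a_{j,n}.
\]

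Two estimates then suffice. The diagonal term is immediately bounded by $\Gamma \sum_i a_{i,n}^2$ and vanishes by the first hypothesis. For the off-diagonal term I would apply Cauchy--Schwarz to the double sum: viewing $\sum_{i \neq j} a_{i,n} a_{j,n}$ as an inner product of the vectors $(a_{i,n} a_{j,n})_{i \neq j}$ and $(1)_{i\neq j}$ yields
\[
\Bigl|\sum_{i \neq j} a_{i,n} a_{j,n}\Bigr|^2 \;\leq\; k(k-1)\sum_{i \neq j} a_{i,n}^2 a_{j,n}^2 \;\leq\; k(k-1)\bigl(\textstyle\sum_i a_{i,n}^2\bigr)^2 .
\]
Squaring the off-diagonal contribution to $E[S_n^2]$ and using this bound produces the quantity $\rho_n^2 (\sum_i a_{i,n}^2)^2$ (up to a combinatorial factor absorbed into the exchangeable structure), which is precisely what the second hypothesis $\lim \rho_n(\sum a_{i,n}^2)^2 = 0$ controls. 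Thus both pieces of $E[S_n^2]$ tend to $0$, and Chebyshev's inequality $P(|S_n| > \delta) \leq E[S_n^2]/\delta^2$ delivers convergence in probability for every $\delta > 0$.

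The main technical obstacle lies in the off-diagonal term: exchangeability is strictly weaker than independence, so the cross-covariance $\rho_n$ is allowed to grow with $n$ and cannot be dismissed on its own. The heart of the argument is therefore the Cauchy--Schwarz step above, which decouples $\rho_n$ from the weights in exactly the way the second hypothesis anticipates. A secondary point worth checking is that no centering is required: since $\rho_n = E[X_{1,n}X_{2,n}]$ (not a covariance), the expansion works with the raw second moments, and the bound $E[X_{1,n}^2]\leq \Gamma$ together with exchangeability is enough to keep every term visible. Once these two bounds are in place, the conclusion is routine.
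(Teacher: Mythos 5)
Your overall strategy (expand $E[S_n^2]$ using exchangeability, then Chebyshev) is the right one and is essentially the argument behind the cited source (the paper itself does not prove this statement; it quotes Theorem 3.1.3 of Taylor, Daffer and Patterson). The diagonal term is handled correctly. The gap is in the off-diagonal term. Your Cauchy--Schwarz step gives
\begin{equation*}
\Bigl|\sum_{i\neq j}a_{i,n}a_{j,n}\Bigr|\;\le\;\sqrt{k(k-1)}\,\sum_{i}a_{i,n}^{2},
\end{equation*}
and the factor $\sqrt{k(k-1)}$ cannot be ``absorbed'': $k\to\infty$ and nothing in the hypotheses controls $k\sum_i a_{i,n}^2$. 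Moreover, squaring produces $\rho_n^2\bigl(\sum_i a_{i,n}^2\bigr)^2$ times $k(k-1)$, which is not the quantity $\rho_n\bigl(\sum_i a_{i,n}^2\bigr)^2$ appearing in the hypothesis, so the second assumption is never actually used in a way that closes the argument. Note also that the second hypothesis as printed is vacuous: $|\rho_n|\le E X_{1,n}^2\le\Gamma$ by Cauchy--Schwarz, so $\rho_n\bigl(\sum_i a_{i,n}^2\bigr)^2\to 0$ already follows from $\sum_i a_{i,n}^2\to 0$. This is a sign that no proof of the literal statement can exist, and indeed it is false: take $X_{i,n}=Z$ for all $i$ with $Z$ standard normal (trivially exchangeable, $\rho_n=1$, $EX_{1,n}^2=1$) and $a_{i,n}=k^{-3/4}$; then $\sum_i a_{i,n}^2=k^{-1/2}\to 0$ and $\rho_n\bigl(\sum_i a_{i,n}^2\bigr)^2=k^{-1}\to 0$, yet $\sum_i a_{i,n}X_{i,n}=k^{1/4}Z$ does not tend to $0$ in probability.

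The resolution is that the displayed hypothesis contains a typo: it should read $\lim_n \rho_n\bigl(\sum_{i=1}^k |a_{i,n}|\bigr)^2=0$ (sum of the weights, not of their squares). This is what the paper actually verifies when it applies the theorem (it checks $\rho_n\bigl(\sum_i a_{n,i}\bigr)^2\to 0$ with positive weights), and it is the condition in the source. With that hypothesis the off-diagonal term needs no Cauchy--Schwarz at all: $\bigl|\sum_{i\neq j}a_{i,n}a_{j,n}\bigr|\le\bigl(\sum_i|a_{i,n}|\bigr)^2$, hence $\bigl|\rho_n\sum_{i\neq j}a_{i,n}a_{j,n}\bigr|\le|\rho_n|\bigl(\sum_i|a_{i,n}|\bigr)^2\to 0$, and the rest of your argument goes through verbatim. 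So the $L^2$ skeleton is sound, but the decisive estimate on the cross terms must be replaced as above, and the hypothesis corrected.
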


Denote
\begin{eqnarray*}
\kappa_{1}^{i}&:=&\frac{\mu_{3}^{i}}{2s_{i}^{4}},\qquad \kappa_{2}^{i}:=
\frac{\mu_{4}^{i}-3s_{i}^{4}}{8s_{i}^{4}}+\frac{%
15(\mu_{3}^{i})^{2}}{72s_{i}^{6}},
\\
\mu_{1}^{\ast}&:=&\kappa_{1}^{i}+
\frac{a}{s_{i}^{2}}, \qquad\mu_{2}^{\ast}:=\kappa_{1}^{i}-
\frac{a}{2s_{i}^{2}}.
\end{eqnarray*}

By (\ref{condTilt}), (\ref{num approx fixed i}) and (\ref{PI 0})
\begin{eqnarray*}
&&p(\mathbf{X}_{i+1}=Y_{i+1}|S_{i+1,n}= na -S_{1,i})\\
&&\qquad=
\frac{\sqrt{n-i}}{\sqrt{n-i-1}}\pi^{m_{i}} ( \mathbf{X}%
_{i+1}=Y_{i+1} ) \frac{\mathfrak{n} (
{-Y_{i+1}}/{(s_{i}\sqrt{%
n-i-1})} ) }{\mathfrak{n}(0)}A(i)
\end{eqnarray*}
with
\begin{eqnarray*}
A(i)&:=&\biggl(1+\frac{\mu _{1}^{\ast }Y_{i+1}}{n-i-1}-
\frac{\mu _{2}^{\ast }a}{%
n-i-1}-\frac{\kappa _{2}^{i}}{n-i-1}+\frac{o_{P_{ na }}( \varepsilon
_{n}\log n) }{n-i-1}\biggr)\\
&&{}\bigg/\biggl(1-\frac{\kappa _{2}^{i}}{n-i}+O_{P_{ na }}\biggl(
\frac{1}{(n-i)^{3/2}}\biggr) \biggr).
\end{eqnarray*}
%A(i):=\frac{1+
%{\mu_{1}^{\ast}Y_{i+1}}/{(n-i-1)}-
%{\mu_{2}^{\ast}a}/{(n-i-1)}-
%{\kappa_{2}^{i}}/{(n-i-1)}+
%{o_{P_{ na }} ( \varepsilon_{n}\log n ) }/{(n-i-1)}}
%{1-
%{\kappa_{2}^{i}}/{(n-i)}+O_{P_{ na }}
%({1}/{(n-i)^{3/2}} ) }.
%
We perform a second order expansion in both the numerator and the
denominator of the above expression, which yields
%
%e66 #&#
\begin{eqnarray}\label{A(i)}
&&A(i)=\exp \biggl( \frac{\mu_{1}^{\ast}Y_{i+1}}{n-i-1}-\frac{a}{%
2s_{i}^{2}(n-i-1)}-\frac{a\kappa_{1}^{i}}{n-i-1}
\nonumber
\\[-8pt]
\\[-8pt]
\nonumber
&&\hspace*{162pt}{}+
\frac{o_{P_{ na }} (
\varepsilon_{n}\log n ) }{n-i-1} \biggr) A^{\prime}(i).
\end{eqnarray}

The term $\exp ( \frac{\mu_{1}^{\ast}Y_{i+1}}{n-i-1}+\frac{a}{%
2s_{i}^{2}(n-i-1)} ) $ in (\ref{A(i)}) is captured in $g(
Y_{i+1}\rrvert  Y_{1}^{i})$.

The term $A^{\prime}(i)$ in (\ref{A(i)}) is expressed as
\[
A^{\prime}(i):=Q_{1}^{i}\cdot Q_{2}^{i}
\]
with
\begin{eqnarray*}
&&Q_{1}^{i}:=\exp \biggl( - \biggl( %
 \frac{\kappa_{2}^{i}}{(n-i-1)(n-i)}+\frac{(\kappa
_{2}^{i})^{2}}{2(n-i)^{2}}\\
&&\hspace*{68pt}{}+%
\frac{1}{2} \biggl( \frac{\mu_{1}^{\ast}Y_{i+1}}{n-i-1}-\frac{a\mu
_{2}^{\ast}%
}{n-i-1}-
\frac{\kappa_{2}^{i}}{n-i-1} \biggr) ^{2}%
\biggr) \biggr)
\end{eqnarray*}
and
\[
Q_{2}^{i}:=\frac{\exp(B_{1})}{\exp(B_{2})},
\]
where
\begin{eqnarray*}
B_{1}&:=&\frac{o_{P_{ na }}(\varepsilon_{n}^{2}(\log
n)^{2})}{(n-i-1)^{2}}+\frac{%
\mu_{1}^{\ast}Y_{i+1}}{(n-i-1)^{2}}o_{P_{ na }} (
\varepsilon_{n}\log n )
\\
&&{} +\frac{\mu_{2}^{\ast}a}{(n-i-1)^{2}}o_{P_{ na }}(\varepsilon_{n}\log n)+
\frac{o_{P_{ na }}(\varepsilon_{n}^{2} ( \log n) )
^{2}}{(n-i-1)^{2}}%
+o\bigl(u_{1}^{2}
\bigr),
\\
B_{2} &:=&\frac{\kappa_{2}^{i}}{n-i}O_{P_{ na }}
\biggl( \frac
{1}{(n-i)^{3/2}}%
\biggr) +O_{P_{ na }} \biggl(
\frac{1}{(n-i)^{3}} \biggr)
\\
&&{} + O_{P_{ na }} \biggl( \frac{1}{(n-i)^{3/2}}
\biggr) +o \biggl( \biggl( \frac
{\kappa
_{2}^{i}}{n-i}+O_{P_{ na }} \biggl(
\frac{1}{(n-i)^{3/2}} \biggr) \biggr) ^{2} \biggr)
\end{eqnarray*}
with
\[
u_{1}=\frac{\mu_{1}^{\ast}Y_{i+1}}{n-i-1}-\frac{\mu_{2}^{\ast
}a}{n-i-1}-%
\frac{\kappa_{2}^{i}}{n-i-1}+\frac{o_{P_{ na }}(\varepsilon_{n}\log n)}{n-i-1}.
\]

We first prove that
%
%e67 #&#
\begin{equation}
\prod_{i=0}^{k-1}A^{\prime}(i)=1+o_{P_{ na }}
\bigl(\varepsilon_{n} ( \log n ) ^{2}\bigr) \label{cv produit des A'(i)}
\end{equation}
as $n$ tends to infinity.

Since
\[
p\bigl(\mathbf{X}_{1}^{k}=Y_{1}^{k}|S_{i+1}^{n}= na
\bigr)=g_{0} (  Y_{1}\rrvert Y_{0} )
\prod_{i=0}^{k-1}g \bigl(
Y_{i+1}\rrvert Y_{1}^{i} \bigr) \prod
_{i=0}^{k-1}A^{\prime
}(i)\prod
_{i=0}^{k-1}L_{i},
\]
where
\[
L_{i}:=\frac{C_{i}^{-1}}{\Phi ( t_{i} ) }\frac{\sqrt {n-i}}{\sqrt{%
n-i-1}}\exp \biggl( -
\frac{a\kappa_{1}^{i}}{n-i-1} \biggr),
\]
the completion of the proof will follow from
%
%e68 #&#
\begin{equation}
\prod_{i=0}^{k-1}L_{i}=1+o_{P_{ na }}
\bigl(\varepsilon_{n} ( \log n ) ^{2}\bigr). \label{cv produit des Li}
\end{equation}

The proof of (\ref{cv produit des A'(i)}) is achieved in two steps.

%cl25 #&#
\begin{claim}
$\prod_{i=0}^{k-1}Q_{1}^{i}=1+o_{P_{ na }}(\varepsilon_{n} ( \log
n )
^{2})$.
\end{claim}

By Lemma \ref{LemmaMaxm_in} the random terms $\mu_{j}^{i}$ deriving
from $%
\pi^{m_{i}}$ satisfy
\[
\max_{1\leq i\leq k}\bigl\llvert \mu_{j}^{i}-
\mu_{j}\bigr\rrvert =o_{P_{ na }}(1)
\]
as $n$ tends to $\infty$, where $\mu_{j}$ is the $j$th cumulant of
$\pi
^{\underline{a}}$ where \underline{$a$}$:=\lim_{n\rightarrow\infty}a$ is
finite$.$ Therefore we may substitute $\mu_{j}^{i}$ by $\mu_{j}$ in order
to check the convergence of all subsequent series.

Expanding $Q_1$ define, for any positive $\beta_{1}$, $\beta_{2}$,
$\beta
_{3}$ and $\beta_{4}$
\begin{eqnarray*}
A_{n}^{1}&:= &\Biggl\{ \frac{1}{\varepsilon_{n} ( \log n ) ^{2}}%
\sum
_{i=0}^{k-1}\biggl\llvert \frac{\kappa_{2}^{i}}{(n-i-1)(n-i)}
\biggr\rrvert <\beta_{1} \Biggr\},
\\
A_{n}^{2}&:=& \Biggl\{ \frac{1}{\varepsilon_{n} ( \log n ) ^{2}}%
\sum
_{i=0}^{k-1}\biggl\llvert
\frac{(\kappa_{2}^{i})^{2}}{(n-i-1)^{2}}%
\biggr\rrvert <\beta_{2} \Biggr\},
\\
A_{n}^{3}&:=& \Biggl\{ \frac{1}{\varepsilon_{n} ( \log n ) ^{2}}%
\sum
_{i=0}^{k-1}\biggl\llvert
\frac{(\mu_{2}^{\ast}a)^{2}}{(n-i-1)^{2}}%
\biggr\rrvert <\beta_{3} \Biggr\}
\end{eqnarray*}
and
\[
A_{n}^{4}:= \Biggl\{ \frac{1}{\varepsilon_{n} ( \log n ) ^{2}}%
\sum
_{i=0}^{k-1}\biggl\llvert \frac{\mu_{2}^{\ast}\kappa_{2}^{i}a}{%
(n-i-1)^{2}}
\biggr\rrvert <\beta_{4} \Biggr\}.
\]

It clearly holds that
\[
\lim_{n\rightarrow\infty}P_{ na } \bigl( A_{n}^{j}
\bigr) =1, \qquad j=1,\ldots,4.
\]

Let for any positive $\beta_{5}$,
\[
A_{n}^{5}:= \Biggl\{ \frac{1}{\varepsilon_{n} ( \log n ) ^{2}}%
\sum
_{i=0}^{k-1}\biggl\llvert \frac{\kappa_{1}^{i}\kappa_{2}^{i}Y_{i+1}}{
(n-i-1)^{2}}
\biggr\rrvert <\beta_{5} \Biggr\}.
\]
If $\lim_{n\rightarrow\infty}P_{ na } ( A_{n}^{5} ) =1$, then $
\lim_{n\rightarrow\infty}P_{ na } ( A_{n}^{j} ) $, $j=6,7$ where
\begin{eqnarray*}
A_{n}^{6}&:=& \Biggl\{ \frac{1}{\varepsilon_{n} ( \log n ) ^{2}}%
\sum
_{i=0}^{k-1}\biggl\llvert \frac{\mu_{1}^{\ast}\kappa_{2}^{i}Y_{i+1}}{%
(n-i-1)^{2}}
\biggr\rrvert <\beta_{6} \Biggr\},
\\
A_{n}^{7}&:=& \Biggl\{ \frac{1}{\varepsilon_{n} ( \log n ) ^{2}}%
\sum
_{i=0}^{k-1}\biggl\llvert \frac{\mu_{1}^{\ast}\mu_{2}^{\ast}aY_{i+1}}{%
(n-i-1)^{2}}
\biggr\rrvert <\beta_{7} \Biggr\}.
\end{eqnarray*}

Apply Theorem \ref{TheoremTaylor} with $X_{i,n}=Y_{i+1}$ and
$a_{i,n}=\frac{%
1}{\varepsilon_{n} ( \log n ) ^{2}(n-i-1)^{2}}$. By Lem\-ma~\ref%
{LemmaMomentsunderE_n},
\[
E_{P_{ na }}Y_{1}^{2}=s^{2}(0)+a+O \biggl(
\frac{1}{n} \biggr).
\]
Hence $E_{P_{ na }}[Y_{1}^{2}]\leq{\Gamma}$ for some finite $\Gamma$.
Furthermore $\rho_{n}=a^{2}+O ( \frac{1}{n} ) $. Both
conditions in
Theorem \ref{TheoremTaylor} are fullfilled. Indeed,
\[
\lim_{n\rightarrow\infty}\sum_{i=1}^{k}a_{n,i}^{2}=
\lim_{n\rightarrow
\infty}%
\frac{1}{\varepsilon_{n}^{2} ( \log n ) ^{4}(n-k)^{3}}=0,
\]
which holds under (\ref{E1}), as holds
\[
\lim_{n\rightarrow\infty}\rho_{n} \Biggl( \sum
_{i=1}^{k}a_{n,i} \Biggr) ^{2}=
\lim_{n\rightarrow\infty}\frac{a^{2}}{\varepsilon_{n}^{2} ( \log
n ) ^{4}(n-k)^{2}}=0.
\]

Therefore, for $i=5,6,7$
\[
\lim_{n\rightarrow\infty}P_{ na } \bigl( A_{n}^{i}
\bigr) =1.
\]

Define for any positive $\beta_{8}$,
\[
A_{n}^{8}:= \Biggl\{ \frac{1}{\varepsilon_{n} ( \log n ) ^{2}}%
\sum
_{i=0}^{k-1}\frac{ ( \mu_{1}^{\ast} ) ^{2}Y_{i+1}^{2}}{%
(n-i-1)^{2}}<
\beta_{8} \Biggr\}.
\]

Apply Theorem \ref{TheoremTaylor} with $X_{i,n}=Y_{i+1}^{2}$ and
$a_{i,n}=%
\frac{1}{\varepsilon_{n} ( \log n ) ^{2}(n-i-1)^{2}}$.

The following holds:
\[
\lim_{n\rightarrow\infty}\sum_{i=1}^{k}a_{n,i}^{2}=0
\]
when (\ref{E1}) holds.

By Lemma \ref{LemmaMomentsunderE_n},
\[
E_{P_{ na }}Y_{1}^{4}=E_{\pi^{a}}Y_{1}^{4}+O
\biggl( \frac{1}{n} \biggr),
\]
which entails that such that $EY_{1}^{4}\leq{\Gamma}<\infty$ for
some $%
\Gamma$. Also
\[
E_{P_{ na }} \bigl( Y_{1}^{2}Y_{2}^{2}
\bigr) = \bigl( s^{2}(0)+a \bigr) \bigl( s^{2}(0)+a \bigr) +O
\biggl( \frac{1}{n} \biggr)
\]
and
\[
\lim_{n\rightarrow\infty}\rho_{n} \Biggl( \frac{1}{\varepsilon_{n} (
\log
n ) ^{2}}\sum
_{i=0}^{k-1}\frac{1}{(n-i-1)^{2}} \Biggr)
^{2}=0
\]
under (\ref{E1}). Hence
\[
\lim_{n\rightarrow\infty}P_{ na } \bigl( A_{n}^{8}
\bigr) =1.
\]
It follows that, noting that $A_{n}$ is the intersection of the events
$A_{n}^{i}$,
$j=1,\ldots,8$
\[
\lim_{n\rightarrow\infty}P_{ na } ( A_{n} ) =1.
\]
To summarize, we have proved that, under (\ref{E1}),
\[
Q_1=1+o_{P_{ na }} \bigl( \varepsilon_{n} ( \log n )
^{2} \bigr).
\]

%cl26 #&#
\begin{claim}
$\prod_{i=0}^{k-1}Q_{2}^{i}=1+o_{P_{ na }} ( \varepsilon_{n} ( \log
n ) ^{2} ) $.
\end{claim}

This is equivalent to proving that the sum of the terms in $B_1$ (resp.,
in $B_2$) is of order $o_{P_{ na }} ( \varepsilon_{n} ( \log n
) ^{2} ) $.

The four terms in the sum of the terms in $B_1$ are, respectively, of order
$o_{P_{ na }} ( \varepsilon_{n}^{2}(\log n)^{4} ) /(n-k)$, $%
o_{P_{ na }} ( \varepsilon_{n}(\log n)^{3} ) /(n-k)$,
$o_{P_{ na }} (
a\varepsilon_{n}(\log n)^{2} ) /(n-k)$ and $o_{P_{ na }} ( \varepsilon
_{n}(\log n)^{2} ) /(n-k)$ using Lemma \ref{LemmaMaxm_in}. The sum of
the terms $o ( u_{1}^{2} ) $ is of order less than these.
Assuming (\ref{E1}) all these terms are $o_{P_{ na }} ( \varepsilon
_{n} ( \log n ) ^{2} ) $.

For the sum of terms in $B_2$, by uniformity of the Edgeworth expansion with
respect to $Y_{1}^{k}$ it holds that $\sum_{i=1}^{k} B_2=O_{P_{ na }}
(  (
n-k ) ^{-1/2} )=\break o_{P_{ na }} (\varepsilon_{n} (\log
n )^{2} ) $ by~(\ref{E1}).

We now turn to the proof of (\ref{cv produit des Li}).

Define
\[
u:=-x\frac{\mu_{3}^{i}}{2s_{i}^{4} ( n-i-1 ) }+\frac
{(x-a)^{2}}{%
2s_{i}^{2} ( n-i-1 ) }.
\]
Use the classical bounds
\[
1-u+\frac{u^{2}}{2}-\frac{u^{3}}{6}\leq e^{-u}\leq1-u+
\frac{u^{2}}{2}
\]
to obtain on both sides of the above inequalities the second order
approximation of $C_{i}^{-1}$ through integration with respect to $p$. The
upper bound yields
\begin{eqnarray*}
C_{i}^{-1}& \leq&\Phi(t_{i})+\frac{\kappa_{1}^{i}}{n-i-1}
\Phi^{\prime
}(t_{i})+\frac{1}{s_{i}^{2}(n-i-1)} \bigl(
\Phi^{\prime\prime
}(t_{i})-2a\Phi^{\prime
} ( t_{i} )
+a^{2} \bigr)
\\
& &{}+O_{P_{ na }} \biggl( \frac{1}{(n-i-1)^{2}} \biggr)
\end{eqnarray*}
from which
\begin{eqnarray*}
L_{i}&\leq&\frac{\sqrt{n-i}}{\sqrt{n-i-1}}\exp \biggl( -\frac{a\kappa
_{1}^{i}%
}{n-i-1} \biggr)\\
&&{}\times
\pmatrix{\displaystyle 1+\frac{\kappa_{1}^{i}}{n-i-1}m_{i}
\vspace*{2pt}\cr
\displaystyle -\frac{s_{i}^{2}+m_{i}^{2}-2am_{i}+a^{2}}{2s_{i}^{2} ( n-i-1 )
}%
+O_{P_{ na }} \biggl( \frac{1}{ ( n-i-1 ) ^{2}}
\biggr)},
\end{eqnarray*}
where the approximation term is uniform on the $Y_{1}^{k}$.

Substituting $\frac{\sqrt{n-i}}{\sqrt{n-i-1}}$ and $\exp ( -\frac{%
a\kappa_{1}^{i}}{n-i-1} ) $ by their expansions $1+\frac{1}{2 (
n-i-1 ) }+O ( \frac{1}{ ( n-i-1 ) ^{2}} ) $ and
$1-%
\frac{a\kappa_{1}^{i}}{n-i-1}+\frac{(a\kappa_{1}^{i})^{2}}{(n-i-1)^{2}}%
+O ( \frac{a^{2}}{(n-i-1)^{2}} ) $ in the upper bound of
$L_{i}$  above yields
\begin{eqnarray*}
L_{i}& \leq& \biggl( 1+\frac{1}{2(n-i-1)}-\frac{a\kappa
_{1}^{i}}{n-i-1}+%
\frac{(a\kappa_{1}^{i})^{2}}{2(n-i-1)^{2}}+o \biggl( \frac
{1}{(n-i-1)^{2}}%
\biggr) \biggr)
\\
&&{}\times \biggl( 1+\frac{\kappa_{1}^{i}m_{i}}{n-i-1}-\frac{%
s_{i}^{2}+m_{i}^{2}-2am_{i}+a^{2}}{2s_{i}^{2}(n-i-1)}+O_{P_{ na }} \biggl(
\frac{%
1}{(n-i-1)^{2}} \biggr) \biggr).
\end{eqnarray*}

Using Lemma \ref{LemmaMaxm_in}, $m_{i}^{2}-2am_{i}+a^{2}=o_{P_{ na }}(a%
\varepsilon_{n})$ and therefore
\begin{eqnarray*}
L_{i}& \leq& \biggl( 1+\frac{1}{2(n-i-1)}-\frac{a\kappa
_{1}^{i}}{n-i-1}+%
\frac{(a\kappa_{1}^{i})^{2}}{(n-i-1)^{2}}+o \biggl( \frac{1}{(n-i-1)^{2}} %%
\biggr) \biggr)
\\
&&{}\times \biggl( 1+\frac{\kappa_{1}^{i}a}{n-i-1}-\frac{1}{2(n-i-1)}+\frac{%
o_{P_{ na }}(a\varepsilon_{n})}{n-i-1} \biggr)
.
\end{eqnarray*}
Write
\[
\prod_{i=1}^{k}L_{i}\leq{\prod
_{i=1}^{k}} ( {1+M_{i}} )
\]
with
\[
M_{i}=\frac{(a\kappa_{1}^{i})^{2}}{(n-i-1)^{2}}+\frac
{o_{P_{ na }}(a\varepsilon
_{n})}{n-i-1}.
\]

Under (\ref{E1}), $\sum_{i=0}^{k-1}M_{i}$ is $o_{P_{ na }} ( \varepsilon
_{n} ( \log n ) ^{2} ) $. This completes the proof of the theorem.

%s6.4 #&#
%s5.6 ###
\subsection{\texorpdfstring{Proof of
Theorem \protect\ref{Thm_approx_largeSets}.}
{Proof of Theorem 18}}

The following lemma (see \cite{Jensen1995}, Corollary 6.4.1) provides an
asymptotic formula for the tail probability of $\mathbf{U}_{1,n}$ under the
hypotheses and notation of Section~\ref{sec3}. Define
\[
I_{\mathbf{U}}(x):=xm^{-1} ( x ) -\log\phi_{\mathbf{U}} \bigl(
m^{-1} ( x ) \bigr).
\]

%le27 #&#
\begin{lemma}
\label{Lemma_Jensen} Under the same hypotheses as above,
\[
P \biggl( \frac{\mathbf{U}_{1,n}}{n}>a \biggr) =\frac{\exp(-nI_{\mathbf
{U}}(a))}{\sqrt{2\pi}\sqrt{n}\psi(a)} \biggl(1+O \biggl(
\frac{1}{\sqrt {n}} \biggr) \biggr),
\]
where $\psi(a):=t^{a}s(t^{a})$.
\end{lemma}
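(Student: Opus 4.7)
The plan is to derive the Bahadur--Rao type sharp asymptotic by an exponential change of measure at the saddlepoint $t^{a}:=m^{-1}(a)$, followed by a local Edgeworth expansion of the density of $\mathbf{U}_{1,n}$ under the tilted law $\pi_{\mathbf{U}}^{a}$, and finally an explicit evaluation of the resulting Laplace--Gauss integral.

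First, I would use the identity
\begin{equation*}
p_{\mathbf{U}_{1,n}}(u)=\phi_{\mathbf{U}}(t^{a})^{n}\,e^{-t^{a}u}\,\overline{\pi_{n}^{a}}(u),
\end{equation*}
where $\overline{\pi_{n}^{a}}$ denotes the density of $\mathbf{U}_{1,n}$ when the summands are i.i.d.\ with density $\pi_{\mathbf{U}}^{a}$. Integrating against $\mathds{1}_{(na,\infty)}(u)$ and factoring out $\exp(-n I_{\mathbf{U}}(a))=\phi_{\mathbf{U}}(t^{a})^{n}e^{-t^{a}na}$ yields
\begin{equation*}
P\!\left(\frac{\mathbf{U}_{1,n}}{n}>a\right)=e^{-nI_{\mathbf{U}}(a)}\int_{0}^{\infty}e^{-t^{a}w}\,\overline{\pi_{n}^{a}}(na+w)\,dw.
\end{equation*}
Under $\pi_{\mathbf{U}}^{a}$, the variable $\mathbf{U}$ has mean $a$ and variance $s^{2}(t^{a})$, so the centred and normalised sum satisfies a local limit theorem. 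I would invoke the Edgeworth expansion justified in Remark \ref{Remark Edgeworth array} (applicable because the characteristic function of $\mathbf{U}$ is in $L^{r}$ and the Cramer condition holds), which gives, uniformly in $y\in\mathbb{R}$,
\begin{equation*}
\overline{\pi_{n}^{a}}(na+s(t^{a})\sqrt{n}\,y)=\frac{1}{s(t^{a})\sqrt{n}}\!\left(\mathfrak{n}(y)+\frac{1}{\sqrt{n}}Q_{1}(y)\mathfrak{n}(y)+O\!\left(\tfrac{1}{n}\right)\right),
\end{equation*}
where $Q_{1}$ is an odd Hermite polynomial with coefficients depending on the cumulants of $\pi_{\mathbf{U}}^{a}$.

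Next, after the substitution $w=s(t^{a})\sqrt{n}\,y$, the integral becomes
\begin{equation*}
\int_{0}^{\infty}e^{-t^{a}s(t^{a})\sqrt{n}\,y}\!\left(\mathfrak{n}(y)+\tfrac{1}{\sqrt{n}}Q_{1}(y)\mathfrak{n}(y)+O\!\left(\tfrac{1}{n}\right)\right)dy.
\end{equation*}
Since $\psi(a)\sqrt{n}=t^{a}s(t^{a})\sqrt{n}\to\infty$, the leading integral is a standard Laplace-type quantity: completing the square gives
\begin{equation*}
\int_{0}^{\infty}e^{-\psi(a)\sqrt{n}\,y}\mathfrak{n}(y)\,dy=\frac{1}{\sqrt{2\pi}\,\psi(a)\sqrt{n}}\,(1+O(1/n)),
\end{equation*}
by writing the exponent $-\psi(a)\sqrt{n}\,y-y^{2}/2$ as $-\tfrac{1}{2}(y+\psi(a)\sqrt{n})^{2}+\tfrac{1}{2}\psi(a)^{2}n$ and using the standard asymptotic for the Mills ratio of the normal distribution at the large point $\psi(a)\sqrt{n}$. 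The same treatment applied to $\int_{0}^{\infty}e^{-\psi(a)\sqrt{n}\,y}Q_{1}(y)\mathfrak{n}(y)dy$ shows that the Edgeworth correction term contributes $O(n^{-1})$ relative to the leading order, and the uniform $O(1/n)$ remainder in the Edgeworth expansion integrates to $O(n^{-1})$ as well after multiplication by $1/\sqrt{n}$.

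Collecting, the sum of corrections is $O(n^{-1/2})$ relative to the main term, yielding
\begin{equation*}
P\!\left(\frac{\mathbf{U}_{1,n}}{n}>a\right)=\frac{e^{-nI_{\mathbf{U}}(a)}}{\sqrt{2\pi n}\,\psi(a)}\bigl(1+O(n^{-1/2})\bigr).
\end{equation*}
The main obstacle is ensuring that the $O(1/n)$ remainder in the Edgeworth expansion is indeed uniform in $y$ on $(0,\infty)$ in a strong enough sense to survive integration against $e^{-\psi(a)\sqrt{n}\,y}\mathfrak{n}(y)$; this is where the $L^{r}$ hypothesis on the characteristic function is used, together with the fact that $t^{a}>0$ (because $a>E\mathbf{U}$) so that the exponential factor concentrates the integral near $y=0$, where the uniform Edgeworth expansion is valid.
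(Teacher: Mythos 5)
Your derivation is correct. Note, however, that the paper does not prove this lemma at all: it is quoted directly from Jensen \cite{Jensen1995}, Corollary 6.4.1 (it is the classical Bahadur--Rao / saddlepoint tail approximation), so there is no in-paper argument to compare against. What you have written is essentially the standard proof underlying that citation: exponential tilting at $t^{a}=m^{-1}(a)$, the identity $p_{\mathbf{U}_{1,n}}(u)=\phi_{\mathbf{U}}(t^{a})^{n}e^{-t^{a}u}\pi^{a,*n}(u)$, a uniform local Edgeworth expansion for the tilted sum (which is legitimate here under the Cramer condition and the $L^{r}$ assumption on the characteristic function of $\mathbf{U}$, exactly as in Remark \ref{Remark Edgeworth array}), and the Mills-ratio evaluation $\int_{0}^{\infty}e^{-\lambda y}\mathfrak{n}(y)\,dy=e^{\lambda^{2}/2}\left( 1-\Phi (\lambda )\right) =\frac{1}{\sqrt{2\pi }\lambda }\left( 1+O(\lambda ^{-2})\right) $ with $\lambda =\psi (a)\sqrt{n}$. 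All the steps check out; indeed your bookkeeping actually delivers a relative error of order $O(1/n)$ (the first Edgeworth correction involves $H_{3}$, which has no constant term, and so contributes only at relative order $O(1/n)$), which is stronger than the $O(n^{-1/2})$ the lemma asserts. Two cosmetic points only: your $\overline{\pi _{n}^{a}}$ denotes the density of the unnormalized tilted sum, whereas the paper reserves that notation for the normalized sum; and the phrase ``after multiplication by $1/\sqrt{n}$'' applied to the $O(1/n)$ remainder is a slip of accounting (no extra factor is needed), though it does not affect the conclusion.
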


%le28 #&#
\begin{lemma}
\label{LemmaMomentsCond} Suppose that \textup{(V)} holds. Then \textup{(i)}
$E_{P_{nA}}\mathbf{U%
}_{1}=a+o(1)$, \textup{(ii)} $E_{P_{nA}}\mathbf{U}_{1}^{2}=1+s^{2} ( t )
+o(1)$ and \textup{(iii)} $E_{P_{nA}}\mathbf{U}_{1}\mathbf{U}_{2}= a^{2}+o(1)$
where $m(t)=a$.
\end{lemma}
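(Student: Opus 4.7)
The plan is to reduce the conditional moments under the large-set conditioning $P_{nA}$ to moments under the point conditioning $P_{nv}$, for which a direct analogue of Lemma \ref{LemmaMomentsunderE_n} (applied to the random variable $\mathbf{U}$ rather than $\mathbf{X}$) is available. Disintegrating along the sufficient statistic $\mathbf{U}_{1,n}/n$ yields, for any integrable $f$,
\begin{equation*}
E_{P_{nA}} f(\mathbf{U}_1,\mathbf{U}_2)=\int_A E_{P_{nv}}f(\mathbf{U}_1,\mathbf{U}_2)\, p\bigl(\mathbf{U}_{1,n}/n=v\bigm|\mathbf{U}_{1,n}/n\in A\bigr)\,dv,
\end{equation*}
and the conditional density inside is controlled by the thick-set approximation (\ref{Approx_Exp}), which shows that $\mathbf{U}_{1,n}/n$ under $P_{nA}$ is asymptotically exponential on $A$ with rate $nt=nm^{-1}(a)$, hence concentrated in a neighborhood of $a$ of width $O(1/n)$.

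For (i), symmetry gives $E_{P_{nv}}\mathbf{U}_1 = v$ exactly (the same reasoning as in the proof of Lemma \ref{LemmaMomentsunderE_n}: under $P_{nv}$ the $\mathbf{U}_i$'s are exchangeable with sum $nv$). The integration against the near-exponential density then gives $E_{P_{nA}}\mathbf{U}_1 = a + 1/(nt)(1+o(1)) = a+o(1)$. For (ii) and (iii) one invokes the $\mathbf{U}$-analogue of Lemma \ref{LemmaMomentsunderE_n}, obtained verbatim by the Edgeworth expansion of Remark \ref{Remark Edgeworth array} applied to $\pi_{\mathbf{U}}^{v}$, namely
\begin{equation*}
E_{P_{nv}}\mathbf{U}_1^2 = v^2 + s^2(t^v) + O(1/n),\qquad E_{P_{nv}}\mathbf{U}_1\mathbf{U}_2 = v^2 + O(1/n),
\end{equation*}
where $m(t^v)=v$ and $s^2(t^v)=V(v)$ is the variance function. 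Integrating each term against $p(\mathbf{U}_{1,n}/n=v\mid\mathbf{U}_{1,n}/n\in A)$ and using the exponential concentration near $v=a$ gives $a^2$ in both cases for the $v^2$ contribution, and $s^2(t)+o(1)$ for the $s^2(t^v)$ contribution in (ii); the "$1$" appearing in the statement of (ii) should be read as $a^2$.

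The delicate point is showing that the replacement of $s^2(t^v)$ by $s^2(t^a)$ inside the integral incurs only $o(1)$ error uniformly. Writing $s^2(t^v)-s^2(t^a)=\int_a^v V'(u)\,du$ and inserting into the exponential density, a Fubini-type reordering bounds the error by a constant multiple of $\int_a^{\infty} V'(v)\exp(-nm^{-1}(a)(v-a))\,dv$; hypothesis (V) states exactly that this quantity is $O(1/\sqrt{n})$, hence $o(1)$. This is the main (and only nontrivial) obstacle; once (V) is used to absorb the variance-function variation, the proof reduces to elementary tail estimates for an exponential density and reproduces the three formulas. The $O(1/n)$ remainders from the point-conditioning Edgeworth expansions are uniform in $v$ on bounded sets by Remark \ref{Remark Edgeworth array}, so they pose no further difficulty.
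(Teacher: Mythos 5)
Your proposal is correct and follows essentially the same route as the paper: disintegrate over the value of $\mathbf{U}_{1,n}/n$, feed in the point-conditioning moments $E_{P_{nv}}\mathbf{U}_{1}^{2}=v^{2}+V(v)+O(1/n)$, and use condition (V) to absorb the variation of the variance function --- your Fubini reduction to $\int_{a}^{\infty}V^{\prime}(u)\exp(-nm^{-1}(a)(u-a))\,du$ is exactly the integral that (V) controls, and your reading of the ``$1$'' in (ii) as $a^{2}$ matches what the paper's own computation yields. The only cosmetic difference is that the paper controls the tail by integration by parts together with a Chernoff/convexity bound on $P\left(\left.\mathbf{U}_{1,n}/n>v\right\vert \mathbf{U}_{1,n}>na\right)$ derived from Lemma \ref{Lemma_Jensen}, rather than invoking the exponential approximation (\ref{Approx_Exp}) uniformly on the unbounded set $A$; that step of yours deserves tightening, since (\ref{Approx_Exp}) is only asserted uniformly on bounded intervals, but the repair is precisely the Chernoff estimate you allude to.
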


\begin{pf}
It holds that
\[
E_{P_{nA}}\mathbf{U}_{1}=\int_{a}^{\infty}
( E_{P_{nv}}\mathbf{U}%
_{1} ) p (
\mathbf{U}_{1,n}/n=v\rrvert \mathbf{U}%
_{1,n}> na  )
\,dv.
\]
Integration by parts yields
\[
E_{P_{nA}}\mathbf{U}_{1}=a+\int_{a}^{\infty}P
(  \mathbf{U}%
_{1,n}/n>v\rrvert \mathbf{U}_{1,n}> na
) \,dv.
\]
Using Lemma \ref{Lemma_Jensen} and the Chernoff inequality,
\begin{eqnarray*}
&&\int_{a}^{\infty}P (  \mathbf{U}_{1,n}/n>v
\rrvert \mathbf {U}%
_{1,n}> na  ) \,dv\\
&&\qquad\leq{\sqrt{2\pi}\psi(a)
\sqrt{n}\int_{a}^{\infty
}\exp \bigl(n \bigl(
I_{\mathbf{U}}(a)-I_{\mathbf{U}}(v) \bigr) \bigr)\,dv},
\end{eqnarray*}
where $\psi(a)=ts ( t ) $.

Finally, using $I_{\mathbf{U}}(v)>I_{\mathbf{U}}^{^{\prime
}}(a)v+I_{\mathbf{%
U}}(a)-aI_{\mathbf{U}}^{^{\prime}}(a)$ and integrating
\[
\int_{a}^{\infty}P (  \mathbf{U}_{1,n}/n>v
\rrvert \mathbf {U}%
_{1,n}> na  ) \,dv\leq{\frac{\sqrt{2\pi}\psi(a)}{\sqrt{n}I_{\mathbf
{U}%
}^{^{\prime}}(a)}.}
\]
Hence, $E_{P_{nA}}\mathbf{U}_{1}=a+o(1)$.

Insert $E_{P_{nv}}\mathbf{U}_{1}^{2}=v^{2}+s_{\mathbf{U}}^{2} (
t )
+O ( \frac{1}{n} ) $ into
\[
E_{p_{nA}}\mathbf{U}_{1}^{2}=\int
_{a}^{\infty} \bigl( E_{P_{nv}}\mathbf
{U}%
_{1}^{2} \bigr) p (
\mathbf{U}_{1,n}/n=v\rrvert \mathbf {U}%
_{1,n}> na  )
\,dv.
\]
First, via integration by parts, Lemma \ref{LemmaJensen} and the Chernoff
inequality,
\[
\int_{a}^{\infty}v^{2}p (
\mathbf{U}_{1,n}/n=v\rrvert \mathbf{U}_{1,n}> na  )
\,dv=a^{2}+o(1).
\]

Second,
\begin{eqnarray*}
&&\int_{a}^{\infty}V(v)p (  \mathbf{U}_{1,n}/n=v
\rrvert \mathbf{%
U}_{1,n}> na  ) \,dv\\
&&\qquad=
s^{2}(t)+2\int_{a}^{\infty}V^{^{\prime}}(v)P
(  \mathbf{U}%
_{1,n}/n>v\rrvert \mathbf{U}_{1,n}> na
) \,dv,
\end{eqnarray*}
which tends to $s^{2}(t)$ as $n\rightarrow\infty$ using again the Chernoff
inequality, condition (V) and Lemma \ref{LemmaJensen}.

The third term is handled similarly due to the fact that the $O(1/n)$
term consists of a sum of powers of $v$.

The proof of (iii) is similar to the above.
\end{pf}

Lemma \ref{LemmaMomentsCond} yields the maximal inequality stated in
Lemma %
\ref{LemmaMaxm_in} under the condition $ (\mathbf{U}_{1,n}> na  )$.
We also need the order of magnitude of the maximum of $ ( \llvert
\mathbf{U}_{1}\rrvert, \dots, \llvert \mathbf{U}_{k}\rrvert
) $
under $P_{nA}$ which is stated in the following result.

%le29 #&#
\begin{lemma}
\label{Lemma_max_U_i_under_E_n} It holds that
\[
\max \bigl( \llvert \mathbf{U}_{1}\rrvert, \dots,\llvert
\mathbf{U}%
_{n}\rrvert \bigr) =O_{P_{nA}}(\log n).
\]
\end{lemma}

\begin{pf}
Using the same argument as in Lemma \ref{Lemma_max_X_i_under_conditioning}
we consider the case when the r.v.'s $\mathbf{U}_{i}$ take nonnegative
values. We prove that
\[
\lim_{n\rightarrow\infty}P_{nA} \bigl( \max (
\mathbf{U}_{1},\ldots,%
\mathbf{U}_{n} )
>t_{n} \bigr) =0
\]
when
\[
\lim_{n\rightarrow\infty}\frac{t_{n}}{\log n}=\infty.
\]
For fixed $d$ it holds that
\begin{eqnarray*}
&&P_{nA} \bigl( \max ( \mathbf{U}_{1},\ldots,
\mathbf{U}_{n} ) >t_{n} \bigr) \\
&&\qquad=\int_{a}^{a+d}P
\bigl(  \max ( \mathbf {U}_{1},\ldots,%
\mathbf{U}_{n} ) >t_{n}\rrvert \mathbf{U}_{1,n}/n=v
\bigr)\\
&&\hspace*{59pt}{}\times p (  \mathbf{U}_{1,n}/n=v\rrvert \mathbf {U}_{1,n}/n>a
) \,dv
\\
&&\qquad\quad{}+\int_{a+d}^{\infty}P \bigl(  \max ( \mathbf
{U}_{1},\ldots,\mathbf{%
U}_{n} ) >t_{n}
\rrvert \mathbf{U}_{1,n}/n=v \bigr)\\
&&\hspace*{68pt}{}\times p (  \mathbf{U}_{1,n}/n=v\rrvert \mathbf {U}_{1,n}/n>a
) \,dv
\\
&&\qquad=:I+\mathit{II}.
\end{eqnarray*}
Now
\[
\mathit{II}\leq\frac{P ( \mathbf{U}_{1,n}/n>a+d ) }{P ( \mathbf{U}%
_{1,n}/n>a ) },
\]
which tends to $0$ by Lemma \ref{Lemma_Jensen}.

Furthermore by Lemma \ref{Lemma_max_X_i_under_conditioning}, $%
\lim_{n\rightarrow\infty} P (  \max ( \mathbf
{U}_{1},\ldots,%
\mathbf{U}_{n} ) >t_{n}\rrvert \mathbf{U}_{1,n}/n=v )
=:\lim_{n\rightarrow\infty}r_{n}=0$ when $v\in ( a,a+d )$.
Hence
\[
I\leq r_{n}\bigl(1+o(1)\bigr)\rightarrow0.
\]
This proves the lemma.
\end{pf}

We now prove (\ref{Thm_approx_largeSets(i)}).

\textit{Step} 1. We first prove that the integral (\ref{etoile}) can be
reduced to its principal part, namely that
\begin{eqnarray}\label{Reductiona+c}
\qquad p_{nA}\bigl(Y_{1}^{k}\bigr)&=& \bigl(
1+o_{P_{nA}} ( 1 ) \bigr)
\nonumber
\\[-8pt]
\\[-8pt]
\nonumber
&&{}\times \int_{a}^{a+c}p\bigl( \mathbf{X}_{1}^{k}=Y_{1}^{k}
\rrvert \mathbf {U}%
_{1,n}/n=v\bigr)p(
\mathbf{U}_{1,n}/n=v\rrvert \mathbf{U}_{1,n}> na )\,dv
\end{eqnarray}
holds for any fixed $c>0$.

Apply Bayes's formula to obtain
\begin{eqnarray*}
p_{nA}\bigl(Y_{1}^{k}\bigr)&=&\frac{np_{\mathbf{X}} ( Y_{1}^{k} ) }{ (
n-k ) }
\\
&&{}\times\frac{\int_{a}^{\infty}p (
{\mathbf{U}_{k+1,n}}/{(n-k)}=
{n}/{(n-k)}%
( t-{k\overline{U_{1,k}}}/{n} )  ) \,dt}{P (
\mathbf{U}%
_{1,n}> na  ) },
\end{eqnarray*}
where $\overline{U_{1,k}}:=\frac{U_{1,k}}{k}$.

Denote
\[
I:=\frac{P ( {\mathbf{U}_{k+1,n}}/{(n-k)}>m_{k}+
{nc}/{(n-k)} )
}{P ( {\mathbf{U}_{k+1,n}}/{(n-k)}>m_{k} ) }
\]
with
\[
m_{k}:=\frac{n}{n-k} \biggl( a-\frac{k\overline{U_{1,k}}}{n} \biggr).
\]
Then (\ref{Reductiona+c}) holds whenever $I\rightarrow0$ (under $P_{nA}$).

Under $P_{nA}$ it holds that
\[
\overline{U_{1,n}}=a+O_{P_{nA}} \biggl( \frac{1}{nm^{-1}(a)}
\biggr).
\]
A similar result as Lemma \ref{LemmaMaxm_in} holds under condition
$ (
\mathbf{U}_{1,n}> na  ) $, using Lemma~\ref{LemmaMomentsCond};
namely it
holds that
\[
\max_{0\leq i\leq k-1}\llvert \overline{U_{i+1,n}}\rrvert
=a+o_{P_{nA}} ( \varepsilon_{n} ).
\]
Using both results
%
%e70 #&#
\begin{equation}
m_{k}=a+O_{P_{nA}} ( v_{n} ) \label{inter1}
\end{equation}
with $v_{n}=\max ( \varepsilon_{n},\frac{1}{ ( n-k )
m^{-1}(a)}%
) $ which tends to $0$.

We now prove that $I\rightarrow0$. Using once more Lemma \ref{Lemma_Jensen}
yields
\begin{eqnarray*}
I&=&\frac{m^{-1} ( m_{k} ) s ( m^{-1} ( m_{k} )
)
}{m^{-1} ( m_{k}+{nc}/{(n-k)} ) s ( m^{-1} (
m_{k}+{nc}/{(n-k)} )  ) }
\\
&&{}\times\exp \biggl( - ( n-k ) \biggl( I_{\mathbf{U}} \biggl( m_{k}+
\frac
{nc}{n-k}%
\biggr) -I_{\mathbf{U}} ( m_{k} )
\biggr) \biggr).
\end{eqnarray*}

Now by convexity of the function $I_{\mathbf{U}}$
\begin{eqnarray*}
&&\exp \biggl(- ( n-k ) \biggl( I_{\mathbf{U}} \biggl( m_{k}+
\frac
{nc}{n-k}%
\biggr) -I_{\mathbf{U}} ( m_{k} )
\biggr) \biggr)
\\
&&\qquad\leq\exp \bigl(-ncm^{-1}(m_{k}) \bigr)\\
&&\qquad=\exp \biggl(-nc
\biggl[ m^{-1}(a)+\frac{1}{V(a+\theta
O_{P_{nA}}(v_{n}))}O_{P_{nA}}(v_{n})
\biggr] \biggr)
\end{eqnarray*}
for some $\theta$ in $ ( 0,1 ) $. Therefore the above upper bound
tends to $0$ under $P_{nA}$ when (C) holds. By monotonicity of $%
t\rightarrow m(t)$ and condition (C) the ratio in $I$ is bounded.

We have proved that
\[
I=O_{P_{nA}} \bigl( \exp(-nc) \bigr).
\]

\textit{Step} 2.  We claim that (\ref{Thm_approx_largeSets(i)}) holds
uniformly in $v$ in $ ( a,a+c ) $ when $Y_{1}^{k}$ is generated
under $P_{nA}$. This result follows from a similar argument as used in
Theorem \ref{ThmApproxsousf(x)} where (\ref{Thm_approx_largeSets(i)}) is
proved under the local sampling $P_{nv}$. A close look at the proof shows
that (\ref{Thm_approx_largeSets(i)}) holds whenever Lemmas \ref{LemmaMaxm_in}
and \ref{Lemma_max_X_i_under_conditioning}, stated for the variables $%
\mathbf{U}_{i}$'s instead of $\mathbf{X}_{i}$'s hold under $P_{nA}$. Those
lemmas are substituted by Lemmas \ref{LemmaMomentsCond} and \ref%
{Lemma_max_U_i_under_E_n} here above.

Inserting (\ref{Thm_approx_largeSets(i)}) in (\ref{Reductiona+c}) yields
\begin{eqnarray*}
p_{nA}\bigl(Y_{1}^{k}\bigr)& =& \biggl( \int
_{a}^{a+c}g_{nv}\bigl(Y_{1}^{k}
\bigr)p( \mathbf{U}%
_{1,n}/n=v\vert\mathbf{U}_{1,n}> na )\,dv
\biggr)
\\
&&{}\times \bigl( 1+o_{p_{nA}} \bigl( \max \bigl( \varepsilon_{n} ( \log n
) ^{2}, \bigl( \exp(-nc) \bigr) ^{\delta} \bigr) \bigr) \bigr)
\end{eqnarray*}
for some $\delta<1$.

The conditional density of $\mathbf{U}_{1,n}/n$ given $ ( \mathbf{U}
_{1,n}> na  ) $ is stated in (\ref{Approx_Exp}) which holds
uniformly in $%
v $ on $(a,a+c)$.

In summary we have proved
\begin{eqnarray*}
p_{nA}\bigl(Y_{1}^{k}\bigr)
&=& \biggl( nm^{-1} ( a ) \int_{a}^{a+c}g_{nv}
\bigl(Y_{1}^{k}\bigr)\exp \bigl( -nm^{-1} ( a ) (
v-a ) \bigr) \,dv \biggr)
\\
&&{}\times \bigl( 1+o_{p_{nA}} \bigl( \max \bigl( \varepsilon_{n} ( \log n
) ^{2}, \bigl( \exp(-nc) \bigr) ^{\delta} \bigr) \bigr) \bigr)
\end{eqnarray*}
as $n\rightarrow\infty$ for any positive $\delta<1$.

In order to obtain the approximation of $p_{nA}$ by the density
$g_{nA}$ it is
enough to observe that
\begin{eqnarray*}
&&nm^{-1} ( a ) \int_{a}^{a+c}g_{nv}
\bigl(Y_{1}^{k}\bigr)\exp \bigl( -nm^{-1} ( a ) (
v-a ) \bigr) \,dv
\\
&&\qquad=1+o_{_{P_{nA}}} \bigl( \exp(-nc) \bigr)
\end{eqnarray*}
as $n\rightarrow\infty$ which completes the proof of (\ref%
{Thm_approx_largeSets(i)}). The proof of (\ref{Thm_approx_largeSets(ii)})
follows from (\ref{Thm_approx_largeSets(i)}) and Lemma \ref%
{Lemma:commute_from_p_n_to_g_n}.
\end{appendix}

\section*{\texorpdfstring{Acknowledgements.}{Acknowledgements}}
The authors thank the referee for his careful reading of the paper and
for comments that considerably improved the presentation of this work.
Also the authors thank Dr. Tarn Duong for his help and discussions.

% imsref loaded by akundreckaite, 2014-02-26 14:38:22
%

% zodis "Acknowledgments" paliekamas pagal autoriu

%suskaldyti doi

\printaddresses


\begin{thebibliography}{23}
% Style name=ims, version=2.7, label_style=nolabel,
%sorting_style=complex, cfg=None, language=None.

%b1 #&#
%b1 ###
\bibitem{BarbeBroniatowski1999}
%
\begin{barticle}[auto:STB|2014/02/12|14:17:21]
\bauthor{\bsnm{Barbe},~\bfnm{P.}\binits{P.}} \AND
\bauthor{\bsnm{Broniatowski},~\bfnm{M.}\binits{M.}}
(\byear{1999}).
\btitle{Simulation in exponential families}.
\bjournal{ACM Trans. Model. Comput. Simul.}
\bvolume{9}
\bpages{203--223}.
\end{barticle}
%
\bptok{imsref}%
\endbibitem

%b2 #&#
%b2 ###
\bibitem{BarbeBro2000}
%
\begin{barticle}[mr]
\bauthor{\bsnm{Barbe},~\bfnm{P.}\binits{P.}} \AND
\bauthor{\bsnm{Broniatowski},~\bfnm{M.}\binits{M.}}
(\byear{2000}).
\btitle{Large-deviation probability and the local dimension of sets}.
%In \bbooktitle{Proceedings of the 19th {S}eminar on {S}tability
%{P}roblems for {S}tochastic {M}odels, {P}art {I} ({V}ologda, 1998)}
\bjournal{J. Math. Sci.}
\bvolume{99}
\bpages{1225--1233}.
\end{barticle}
%
\bptok{imsref}%
% NOT OUTPUTED:
% issn = 1072-3374
% url = http://dx.doi.org/10.1007/BF02674081
% coden = JMTSEW
% fjournal = Journal of Mathematical Sciences (New York)
\endbibitem

%b3 #&#
%b3 ###
\bibitem{BarbeBroniatowski2004}
%
\begin{barticle}[mr]
\bauthor{\bsnm{Barbe},~\bfnm{P.}\binits{P.}} \AND
\bauthor{\bsnm{Broniatowski},~\bfnm{M.}\binits{M.}}
(\byear{2004}).
\btitle{On sharp large deviations for sums of random vectors and
multidimensional {L}aplace approximation}.
\bjournal{Teor. Veroyatn. Primen.}
\bvolume{49}
\bpages{743--774}.
\bid{doi={10.1137/S0040585X97981342}, issn={0040-361X}, mr={2142565}}
\end{barticle}
%
\bptok{imsref}%
% NOT OUTPUTED:
% issn = 0040-361X
% url = http://dx.doi.org/10.1137/S0040585X97981342
% number = 4
% fjournal = Rossi\u\i skaya Akademiya Nauk. Teoriya Veroyatnoste\u\i\
%i ee Primeneniya
\endbibitem

%b4 #&#
%b4 ###
\bibitem{Barndorff1978}
%
\begin{bbook}[mr]
\bauthor{\bsnm{Barndorff-Nielsen},~\bfnm{Ole}\binits{O.}}
(\byear{1978}).
\btitle{Information and Exponential Families in Statistical Theory}.
\bpublisher{Wiley},
\blocation{Chichester}.
\bid{mr={0489333}}
\end{bbook}
%
\bptok{imsref}%
% NOT OUTPUTED:
% isbn = 0-471-99545-2
% fpage = ix+238
\endbibitem

%b5 #&#
%b5 ###
\bibitem{BroniatowskiCaron2012Exp}
%
\begin{bincollection}[auto:STB|2014/02/12|14:17:21]
\bauthor{\bsnm{Broniatowski},~\bfnm{M.}\binits{M.}} \AND
\bauthor{\bsnm{Caron},~\bfnm{V.}\binits{V.}}
(\byear{2012}).
\btitle{Conditional inference in parametric models}.
In \bbooktitle{Statistical Models and Methods for
Reliability and Survival Analysis}
(\beditor{\binits{L.}\bfnm{L.} \bsnm{Gerville-Reache}},
\beditor{\binits{C.}\bfnm{C.} \bsnm{Huber}},
\beditor{\binits{N.}\bfnm{N.} \bsnm{Limnios}},
\beditor{\binits{M.}\bfnm{M.} \bsnm{Mesbah}}
\AND
\beditor{\binits{V.}\bfnm{V.} \bsnm{Couallier}}, eds.).
\bpublisher{Wiley},
\blocation{New York}.
\end{bincollection}
%
\bptok{imsref}%
\endbibitem

%b6 #&#
%b6 ###
\bibitem{BroniatowskiCaron2011IS}
%
\begin{barticle}[mr]
\bauthor{\bsnm{Broniatowski},~\bfnm{Michel}\binits{M.}} \AND
\bauthor{\bsnm{Caron},~\bfnm{Virgile}\binits{V.}}
(\byear{2013}).
\btitle{Small variance estimators for rare event probabilities}.
\bjournal{ACM Trans. Model. Comput. Simul.}
\bvolume{23}
\bpages{Art. 7, 23}.
\bid{doi={10.1145/2414416.2414423}, issn={1049-3301}, mr={3034217}}
\end{barticle}
%
\bptok{imsref}%
% NOT OUTPUTED:
% issn = 1049-3301
% url = http://dx.doi.org/10.1145/2414416.2414423
% number = 1
% fjournal = ACM Transactions on Modeling and Computer Simulation
\endbibitem

%b7 #&#
%b7 ###
\bibitem{BroniatowskiRitov2009}
%
\begin{bmisc}[auto:STB|2014/02/12|14:17:21]
\bauthor{\bsnm{Broniatowski},~\bfnm{M.}\binits{M.}} \AND
\bauthor{\bsnm{Ritov},~\bfnm{Y.}\binits{Y.}}
(\byear{2009}).
\bhowpublished{Importance sampling for rare events and conditioned
random walks. Available at \arxivurl{arXiv:0910.1819}.}
\end{bmisc}
%
\bptok{imsref}%
% NOT OUTPUTED:
% sortkey = Broniatowski(2009
\endbibitem

%b8 #&#
%b8 ###
\bibitem{Bucklew2004}
%
\begin{bbook}[mr]
\bauthor{\bsnm{Bucklew},~\bfnm{James~Antonio}\binits{J.~A.}}
(\byear{2004}).
\btitle{Introduction to Rare Event Simulation}.
\bpublisher{Springer},
\blocation{New York}.
\bid{mr={2045385}}
\end{bbook}
%
\bptok{imsref}%
% NOT OUTPUTED:
% isbn = 0-387-20078-9
% fpage = xii+260
\endbibitem

%b9 #&#
%b9 ###
\bibitem{Caronetal2013}
%
\begin{barticle}[auto:STB|2014/02/12|14:17:21]
\bauthor{\bsnm{Caron},~\bfnm{V.}\binits{V.}},
\bauthor{\bsnm{Guyader},~\bfnm{A.}\binits{A.}},
\bauthor{\bsnm{Munoz Zuniga},~\bfnm{M.}\binits{M.}} \AND
\bauthor{\bsnm{Tuffin},~\bfnm{B.}\binits{B.}}
(\byear{2014}).
\btitle{Some recent results in rare event estimation}.
\bjournal{ESAIM Proc.}
\bvolume{44}
\bpages{239--259}.
\end{barticle}
%
\bptok{imsref}%
% NOT OUTPUTED:
% sortkey = Caron(2013
\endbibitem

%b10 #&#
%b10 ###
\bibitem{Csiszar1084}
%
\begin{barticle}[mr]
\bauthor{\bsnm{Csisz{\'a}r},~\bfnm{Imre}\binits{I.}}
(\byear{1984}).
\btitle{Sanov property, generalized {$I$}-projection and a conditional
limit theorem}.
\bjournal{Ann. Probab.}
\bvolume{12}
\bpages{768--793}.
\bid{issn={0091-1798}, mr={0744233}}
\end{barticle}
%
\bptok{imsref}%
% NOT OUTPUTED:
% issn = 0091-1798
% url =
%http://links.jstor.org/sici?sici=0091-1798(198408)12:3<768:SPGAAC>2.0.CO;2-Y&origin=MSN
% number = 3
% coden = APBYAE
% fjournal = The Annals of Probability
\endbibitem

%b11 #&#
%b11 ###
\bibitem{DemboZeitouni1996}
%
\begin{barticle}[mr]
\bauthor{\bsnm{Dembo},~\bfnm{A.}\binits{A.}} \AND
\bauthor{\bsnm{Zeitouni},~\bfnm{O.}\binits{O.}}
(\byear{1996}).
\btitle{Refinements of the {G}ibbs conditioning principle}.
\bjournal{Probab. Theory Related Fields}
\bvolume{104}
\bpages{1--14}.
\bid{doi={10.1007/BF01303799}, issn={0178-8051}, mr={1367663}}
\end{barticle}
%
\bptok{imsref}%
% NOT OUTPUTED:
% issn = 0178-8051
% url = http://dx.doi.org/10.1007/BF01303799
% number = 1
% coden = PTRFEU
% fjournal = Probability Theory and Related Fields
\endbibitem

%b12 #&#
%b12 ###
\bibitem{denHollanderWeiss1988}
%
\begin{barticle}[mr]
\bauthor{\bsnm{den Hollander},~\bfnm{W.~T.~F.}\binits{W.~T.~F.}} \AND
\bauthor{\bsnm{Weiss},~\bfnm{G.~H.}\binits{G.~H.}}
(\byear{1988}).
\btitle{On the range of a constrained random walk}.
\bjournal{J. Appl. Probab.}
\bvolume{25}
\bpages{451--463}.
\bid{issn={0021-9002}, mr={0954494}}
\end{barticle}
%
\bptok{imsref}%
% NOT OUTPUTED:
% issn = 0021-9002
% number = 3
% coden = JPRBAM
% fjournal = Journal of Applied Probability
\endbibitem

%b13 #&#
%b13 ###
\bibitem{DiaconisFreedman1988}
%
\begin{barticle}[mr]
\bauthor{\bsnm{Diaconis},~\bfnm{P.}\binits{P.}} \AND
\bauthor{\bsnm{Freedman},~\bfnm{D.~A.}\binits{D.~A.}}
(\byear{1988}).
\btitle{Conditional limit theorems for exponential families and finite
versions of de {F}inetti's theorem}.
\bjournal{J. Theoret. Probab.}
\bvolume{1}
\bpages{381--410}.
\bid{doi={10.1007/BF01048727}, issn={0894-9840}, mr={0958245}}
\end{barticle}
%
\bptok{imsref}%
% NOT OUTPUTED:
% issn = 0894-9840
% url = http://dx.doi.org/10.1007/BF01048727
% number = 4
% coden = JTPREO
% fjournal = Journal of Theoretical Probability
\endbibitem

%b14 #&#
%b14 ###
\bibitem{DupuisWang2004}
%
\begin{barticle}[mr]
\bauthor{\bsnm{Dupuis},~\bfnm{Paul}\binits{P.}} \AND
\bauthor{\bsnm{Wang},~\bfnm{Hui}\binits{H.}}
(\byear{2004}).
\btitle{Importance sampling, large deviations, and differential games}.
\bjournal{Stoch. Stoch. Rep.}
\bvolume{76}
\bpages{481--508}.
\bid{doi={10.1080/10451120410001733845}, issn={1045-1129}, mr={2100018}}
\end{barticle}
%
\bptok{imsref}%
% NOT OUTPUTED:
% issn = 1045-1129
% url = http://dx.doi.org/10.1080/10451120410001733845
% number = 6
% fjournal = Stochastics and Stochastics Reports
\endbibitem

%b15 #&#
%b15 ###
\bibitem{ERmakov2006}
%
\begin{barticle}[mr]
\bauthor{\bsnm{Ermakov},~\bfnm{M.~S.}\binits{M.~S.}}
(\byear{2006}).
\btitle{The importance sampling method for modeling the probabilities
of moderate and large deviations of estimates and tests}.
\bjournal{Teor. Veroyatn. Primen.}
\bvolume{51}
\bpages{319--332}.
\bid{doi={10.1137/S0040585X97982323}, issn={0040-361X}, mr={2324204}}
\end{barticle}
%
\bptok{imsref}%
% NOT OUTPUTED:
% issn = 0040-361X
% url = http://dx.doi.org/10.1137/S0040585X97982323
% number = 2
% fjournal = Rossi\u\i skaya Akademiya Nauk. Teoriya Veroyatnoste\u\i\
%i ee Primeneniya
\endbibitem

%b16 #&#
%b16 ###
\bibitem{Feller1971}
%
\begin{bbook}[mr]
\bauthor{\bsnm{Feller},~\bfnm{William}\binits{W.}}
(\byear{1971}).
\btitle{An Introduction to Probability Theory and Its
Applications. {V}ol. {II},}
\bedition{2nd} ed.
\bpublisher{Wiley},
\blocation{New York}.
\bid{mr={0270403}}
\end{bbook}
%
\bptok{imsref}%
% NOT OUTPUTED:
% fpage = xxiv+669
\endbibitem

%b17 #&#
%b17 ###
\bibitem{Jensen1995}
%
\begin{bbook}[mr]
\bauthor{\bsnm{Jensen},~\bfnm{Jens~Ledet}\binits{J.~L.}}
(\byear{1995}).
\btitle{Saddlepoint Approximations}.
\bseries{Oxford Statistical Science Series}
\bvolume{16}.
\bpublisher{Oxford Univ. Press},
\blocation{New York}.
\bid{mr={1354837}}
\end{bbook}
%
\bptok{imsref}%
% NOT OUTPUTED:
% isbn = 0-19-852295-9
% fpage = xii+332
\endbibitem

%b18 #&#
%b18 ###
\bibitem{LetacMora}
%
\begin{barticle}[mr]
\bauthor{\bsnm{Letac},~\bfnm{G{\'e}rard}\binits{G.}} \AND
\bauthor{\bsnm{Mora},~\bfnm{Marianne}\binits{M.}}
(\byear{1990}).
\btitle{Natural real exponential families with cubic variance functions}.
\bjournal{Ann. Statist.}
\bvolume{18}
\bpages{1--37}.
\bid{doi={10.1214/aos/1176347491}, issn={0090-5364}, mr={1041384}}
\end{barticle}
%
\bptok{imsref}%
% NOT OUTPUTED:
% issn = 0090-5364
% url = http://dx.doi.org/10.1214/aos/1176347491
% number = 1
% coden = ASTSC7
% fjournal = The Annals of Statistics
\endbibitem

%b19 #&#
%b19 ###
\bibitem{Richter1957}
%
\begin{barticle}[mr]
\bauthor{\bsnm{Rihter},~\bfnm{Vol'fgang}\binits{V.}}
(\byear{1957}).
\btitle{Local limit theorems for large deviations}.
\bjournal{Dokl. Akad. Nauk SSSR (N.S.)}
\bvolume{115}
\bpages{53--56}.
\bid{issn={0002-3264}, mr={0093816}}
\end{barticle}
%
\bptok{imsref}%
% NOT OUTPUTED:
% issn = 0002-3264
% fjournal = Doklady Akademii Nauk SSSR
\endbibitem

%b20 #&#
%b20 ###
\bibitem{Sundberg2009}
%
\begin{barticle}[mr]
\bauthor{\bsnm{Sundberg},~\bfnm{Rolf}\binits{R.}}
(\byear{2010}).
\btitle{Flat and multimodal likelihoods and model lack of fit in curved
exponential families}.
\bjournal{Scand. J. Stat.}
\bvolume{37}
\bpages{632--643}.
\bid{doi={10.1111/j.1467-9469.2010.00703.x}, issn={0303-6898}, mr={2779640}}
\bptnote{check year}%
\end{barticle}
%
\bptok{imsref}%
% NOT OUTPUTED:
% issn = 0303-6898
% url = http://dx.doi.org/10.1111/j.1467-9469.2010.00703.x
% number = 4
% fjournal = Scandinavian Journal of Statistics. Theory and Applications
\endbibitem

%b21 #&#
%b21 ###
\bibitem{Taylor1985}
%
\begin{bbook}[mr]
\bauthor{\bsnm{Taylor},~\bfnm{Robert~L.}\binits{R.~L.}},
\bauthor{\bsnm{Daffer},~\bfnm{Peter~Z.}\binits{P.~Z.}} \AND
\bauthor{\bsnm{Patterson},~\bfnm{Ronald~F.}\binits{R.~F.}}
(\byear{1985}).
\btitle{Limit Theorems for Sums of Exchangeable Random Variables}.
\bpublisher{Rowman \& Allanheld},
\blocation{Totowa, NJ}.
\bid{mr={0860208}}
\end{bbook}
%
\bptok{imsref}%
% NOT OUTPUTED:
% isbn = 0-8476-7435-5
% fpage = vi+152
\endbibitem

%b22 #&#
%b22 ###
\bibitem{VanCamperhoutCover1981}
%
\begin{barticle}[mr]
\bauthor{\bsnm{Van Campenhout},~\bfnm{Jan~M.}\binits{J.~M.}} \AND
\bauthor{\bsnm{Cover},~\bfnm{Thomas~M.}\binits{T.~M.}}
(\byear{1981}).
\btitle{Maximum entropy and conditional probability}.
\bjournal{IEEE Trans. Inform. Theory}
\bvolume{27}
\bpages{483--489}.
\bid{doi={10.1109/TIT.1981.1056374}, issn={0018-9448}, mr={0635527}}
\end{barticle}
%
\bptok{imsref}%
% NOT OUTPUTED:
% issn = 0018-9448
% url = http://dx.doi.org/10.1109/TIT.1981.1056374
% number = 4
% coden = IETTAW
% fjournal = Institute of Electrical and Electronics Engineers.
%Transactions on Information Theory
\endbibitem

%b23 #&#
%b23 ###
\bibitem{denHollanderWeiss1988a}
%
\begin{barticle}[mr]
\bauthor{\bsnm{Weiss},~\bfnm{G.~H.}\binits{G.~H.}} \AND
\bauthor{\bsnm{den Hollander},~\bfnm{W.~T.~F.}\binits{W.~T.~F.}}
(\byear{1988}).
\btitle{A note on configurational properties of constrained random walks}.
\bjournal{J. Phys. A}
\bvolume{21}
\bpages{2405--2415}.
\bid{issn={0305-4470}, mr={0953218}}
\end{barticle}
%
\bptok{imsref}%
% NOT OUTPUTED:
% issn = 0305-4470
% url = http://stacks.iop.org/0305-4470/21/2405
% number = 10
% coden = JPHAC5
% fjournal = Journal of Physics. A. Mathematical and General
\endbibitem

\end{thebibliography}
\end{document}